\let\ps@pprintTitle\ps@empty
\newcommand{\be}{\begin{equation}}
\newcommand{\bea}{\begin{equation}\begin{aligned}}
		\newcommand{\beas}{\begin{equation*}\begin{aligned}}
				\newcommand{\eeas}{\end{aligned}\end{equation*}}
		\newcommand{\eea}{\end{aligned}\end{equation}}
\newcommand{\ee}{\end{equation}}
\renewcommand{\div}{{\rm div }}
\begin{document}
\begin{frontmatter}
\title{Local well-posedness of a perturbation problem \\ 
		for the Abels--Garcke--Gr\"un model in three dimensions}
\author[LMY]{Maoyin Lv}
\ead{mylv22@m.fudan.edu.cn}
\address[LMY]{School of Mathematical Sciences, Fudan University, Shanghai 200433, China}
\begin{abstract}
We investigate the Abels--Garcke--Gr\"un model that describes the motion of two viscous incompressible fluids with unmatched densities in the presence of a uniform gravitational field. 
For the perturbated system with respect to a given equilibrium state in three dimensions, we establish the local existence and uniqueness of a strong solution using a suitable iteration scheme and the energy method. This work lays the foundation for further studies on the Rayleigh--Taylor instability problem of nonhomogeneous two-phase flows within the framework of diffuse interface models.
\end{abstract}
\begin{keyword}
Two-phase flow, Abels--Garcke--Gr\"un model, perturbation problem, local well-posedness. 
\medskip 
\MSC[2020] 35B20, 35D35, 35Q35, 76D03.
\end{keyword}
\end{frontmatter}


\newtheorem{thm}{Theorem}[section]
\newtheorem{lem}{Lemma}[section]
\newtheorem{pro}{Proposition}[section]
\newtheorem{cor}{Corollary}[section]
\newproof{pf}{Proof}
\newdefinition{rem}{Remark}[section]
\newtheorem{definition}{Definition}[section]

\section{Introduction}
\numberwithin{equation}{section}
In recent years, the diffuse interface method has become an efficient tool to study the evolution of free interfaces in the multiphase fluid mechanics \cite{AMW}. In the diffuse interface framework, macroscopically immiscible fluid components are considered to be partially miscible in thin transition layers of finite thickness. This method has the advantage that
explicit tracking of interfaces can be avoided in both mathematical formulation and numerical computations. Thus, one can capture topological changes in the interface beyond the occurrence of singularities due to pinch-off and recombination. 

Several diffuse interface models have been proposed in the literature for two incompressible Newtonian viscous fluids. For the simplified case with the same density, we refer to \cite{Gur96,HH77} for the classical ``Model H'', which consists of a Navier--Stokes system for the mean velocity of the mixture and a Cahn--Hilliard equation for the phase-field variable.  
Later, thermodynamically consistent extensions for binary fluids with unmatched densities were introduced based on different choices of the mean velocity of the fluid mixture and constitutive assumptions \cite{AGG,Aki14,LT98}, see also \cite{ten23} for a unified framework and comparison of the existing models. 

In this study, we consider the following coupled system proposed by Abels, Garcke and Gr\"un in \cite{AGG} (AGG model in short):   
\begin{align}
	\begin{cases}
		\partial_t(\rho(\varphi)\boldsymbol{v})+\text{div}(\boldsymbol{v}\otimes(\rho(\varphi)\boldsymbol{v}+\widetilde{\mathbf{J}}))-\text{div}(2\nu(\varphi)\mathbb{D}\boldsymbol{v})+\nabla P=-\sigma l \div(\nabla\varphi\otimes\nabla\varphi),&\text{in }\Omega\times(0,T),\\
		\text{div}\, \boldsymbol{v}=0,&\text{in }\Omega\times(0,T),\\
		\partial_t\varphi+\boldsymbol{v}\cdot\nabla\varphi=\text{div}(m(\varphi)\nabla\mu),&\text{in }\Omega\times(0,T),\\
		\mu=\sigma(-l\Delta\varphi+l^{-1}\Psi'(\varphi)),&\text{in }\Omega\times(0,T),
	\end{cases}\label{AGG}
\end{align}
where $\Omega\subset\mathbb{R}^3$ is a bounded domain with sufficiently smooth boundary $\Gamma:=\partial\Omega$ and $T\in(0,+\infty)$ is a given final time. The state variables are the volume averaged velocity $\boldsymbol{v}=\boldsymbol{v}(x,t): \Omega \times(0,T)\to \mathbb{R}^3$, the pressure of the fluid mixture $P=P(x,t): \Omega \times(0,T)\to \mathbb{R}$, and the phase-field variable $\varphi=\varphi(x,t): \Omega \times(0,T)\to [-1,1]$ that represents the difference in volume fractions of the fluid components. In the Navier--Stokes part of \eqref{AGG}, the average density $\rho$ and the average viscosity $\nu$ are given by
\[\rho(\varphi)=\rho_1\frac{1+\varphi}{2}+\rho_2\frac{1-\varphi}{2},\quad\nu(\varphi)=\nu_1\frac{1+\varphi}{2}+\nu_2\frac{1-\varphi}{2},\]
where $\rho_1$, $\rho_2$ and $\nu_1$, $\nu_2$ are the positive homogeneous density and viscosity parameters of the two fluids, respectively. The operator $\mathbb{D}$ denotes the symmetric gradient, i.e., $\mathbb{D}=\frac{\nabla+\nabla^{\top}}{2}$, and the relative flux term $\widetilde{\mathbf{J}}$ is related to diffusion of the mixed fluid such that 
$$
\widetilde{\mathbf{J}}=-\frac{\rho_1-\rho_2}{2}m(\varphi)\nabla\mu.
$$
{The function $m(\varphi)> 0$} denotes the mobility coefficient that may depend on $\varphi$. The scalar function $\mu:\Omega\times (0,T)\to \mathbb{R}$ is the chemical potential for the fluid mixture, which can be obtained by taking variational derivative of the free energy functional 
$$
E_{\text{free}}(\varphi)= \int_\Omega \sigma\left(\frac{l}{2}|\nabla \varphi|^2+ \frac{1}{l} \Psi(\varphi)\right)\,\mathrm{d}x.
$$
Here, the constant $\sigma>0$ is related to the surface energy density and $l>0$ is a (small) parameter related to the thickness of the interfacial layer. 
The Helmholtz free energy consists of two parts: the gradient term that enforces smooth transition regions between the fluid components and the bulk energy that penalizes mixing. In particular, the nonlinear function $\Psi$ usually has a double-well structure and a physically relevant example is the following Flory--Huggins potential
\begin{align}
	\Psi(r)=\frac{\theta}{2}\big[(1+r)\ln(1+r)+(1-r)\ln(1-r)\big]-\frac{\theta_0}{2}r^2,\quad r\in(-1,1),\notag 
\end{align}
where the constant parameters $\theta$ and $\theta_0$
fulfill the condition $0< \theta<\theta_0$. 

The system \eqref{AGG} subject to suitable boundary and initial conditions has been extensively analyzed in the literature. For the initial boundary value problem with no-slip boundary for the velocity and homogeneous Neumann boundary conditions for the phase-field variable as well as the chemical potential, the existence of global weak solutions was proved in
\cite{ADG-JMFM,ADG-HP}, see also \cite{AG2018}. The existence of local strong solutions for regular free energy densities was established in \cite{AW-JEE}. In \cite{Giorgini2021CVPDE}, the author proved well-posedness with the Flory--Huggins type potential in two dimensions, locally in time for bounded domains and globally in time for periodic boundary conditions.  In the three-dimensional case, the existence and uniqueness of a local strong solution were established in \cite{Giorgini2022IFB}, while global regularity and asymptotic stabilization of global weak solutions were achieved in \cite{AGG2024}. Moreover, we refer to \cite{GGW19,GLW24,GK23} for recent progress on the system \eqref{AGG} subject to a generalized Navier boundary condition for the fluid velocity and a dynamic boundary condition for the phase-field variable. 

In this work, we investigate a perturbation problem for the Abels--Garcke--Gr\"{u}n system \eqref{AGG} in three dimensions (see \eqref{RT} below). {Throughout this paper, we shall focus on the simplified case that the mobility coefficient is a given constant, namely, 
$$ m(\varphi)= m> 0.$$
}Then the equation \eqref{AGG}$_1$ can be rewritten into the following non-conservative form (cf. \cite[(1.5)]{Giorgini2022IFB})
 \begin{align}
 	\rho(\varphi)\partial_t \boldsymbol{v}+\rho(\varphi)(\boldsymbol{v}\cdot\nabla)\boldsymbol{v}-m \rho'(\varphi)(  \nabla\mu(\varphi)\cdot\nabla)\boldsymbol{v}-{\text{div}(2\nu(\varphi)\mathbb{D}\boldsymbol{v})}+\nabla P=-\sigma l \text{div}(\nabla\varphi\otimes\nabla\varphi).
    \label{conservative}
 \end{align}
Using the fact 
 \[-\sigma l\text{div}(\nabla\varphi\otimes\nabla\varphi)=-\sigma l \Delta\varphi\nabla\varphi-\sigma l\nabla\Big(\frac{|\nabla\varphi|^2}{2}\Big),\]
we can further write \eqref{conservative} as
 \begin{align}
 	\rho(\varphi)\partial_t \boldsymbol{v}+\rho(\varphi)(\boldsymbol{v}\cdot\nabla)\boldsymbol{v}-m \rho'(\varphi)(  \nabla\mu(\varphi)\cdot\nabla)\boldsymbol{v}-{\text{div}(2\nu(\varphi)\mathbb{D}\boldsymbol{v})}+\nabla p=-\sigma l \Delta\varphi\nabla\varphi,\label{new-conservative}
 \end{align}
 with the modified pressure $p=P+\frac{\sigma l }{2}|\nabla\varphi|^2$. In view of \eqref{new-conservative}, we consider the following AGG model in the presence of a uniform gravitational field:
\begin{align}
	\begin{cases}
		\rho(\varphi)\partial_t \boldsymbol{v}+\rho(\varphi)(\boldsymbol{v}\cdot\nabla)\boldsymbol{v}-m \rho'(\varphi)(  \nabla\mu(\varphi)\cdot\nabla)\boldsymbol{v}-{\text{div}(2\nu(\varphi)\mathbb{D}\boldsymbol{v})}+\nabla p&\\
		\quad=-\sigma l \Delta\varphi\nabla\varphi-\rho(\varphi)g\boldsymbol{e}_3,&\text{in }\Omega\times(0,T),\\
		\text{div}\, \boldsymbol{v}=0,&\text{in }\Omega\times(0,T),\\
		\partial_t\varphi+\boldsymbol{v}\cdot\nabla\varphi=m\Delta\mu(\varphi),&\text{in }\Omega\times(0,T),\\
		\mu(\varphi)=\sigma(-l\Delta\varphi+ l^{-1}\Psi'(\varphi)),&\text{in }\Omega\times(0,T),
	\end{cases}\label{gravitational}
\end{align}
where $g>0$ is the gravitational constant, $\boldsymbol{e}_3=(0,0,1)^{\top}$ is the vertical unit vector, and $-\rho(\varphi)g\boldsymbol{e}_3$ denotes the gravitational force in the opposite direction of $x_3$-axis. Without loss of generality, we assume in the subsequent analysis that $$\rho_1>\rho_2.$$
Let $(\boldsymbol{v},\varphi)=(\boldsymbol{0},\psi)$ be
an equilibrium state for the system \eqref{gravitational} with an associated equilibrium pressure $p^\ast$, such that $-1<\psi<1$ in $\overline{\Omega}$ and $p^\ast$ satisfies 
\[
\nabla p^\ast=-\sigma l \Delta\psi\nabla\psi-\rho(\psi)g\boldsymbol{e}_3.
\]
{We refer to \cite{AbelsWilke,Elliott} for the existence of the equilibrium state $\psi$ to the standard Cahn--Hilliard equation with the Flory--Huggins potential, subject to homogeneous Neumann boundary conditions. While for (sufficiently regular) Dirichlet boundary conditions, the existence of equilibrium state can be verified in a similar way as \cite{AbelsWilke}.}
%
%
%
%
%
%
%
We define the perturbation of state variables with respect to the given equilibrium state as 
\[
\boldsymbol{v}=\boldsymbol{v}-\boldsymbol{0},\quad\phi=\varphi-\psi,\quad q=p-p^\ast.
\]
Then $(\boldsymbol{v},\phi,q)$ satisfies the following perturbation system:
\begin{align}
	\begin{cases}
		\rho(\phi+\psi)\partial_t\boldsymbol{v}+\rho(\phi+\psi)(\boldsymbol{v}\cdot\nabla)\boldsymbol{ v}-m\dfrac{\rho_1-\rho_2}{2} (\nabla\mu(\phi+\psi)\cdot\nabla)\boldsymbol{v}\\[1mm]
		\quad-{\text{div}(2\nu(\phi+\psi)\mathbb{D}\boldsymbol{v})}+\nabla q=-	\sigma l( \Delta \phi\nabla(\phi+ \psi)+ \Delta\psi\nabla \phi )-\dfrac{\rho_1-\rho_2}{2}\phi {g}\boldsymbol{e}_3,&\text{in }\Omega\times(0,T),\\
		\mathrm{div}\,\boldsymbol{v} =0,&\text{in }\Omega\times(0,T), \\
		\partial_t\phi+  \boldsymbol{v} \cdot\nabla (\phi+\psi)=m \Delta (  \mu(\phi+\psi)
		-\mu( \psi)),&\text{in }\Omega\times(0,T),\\
        \mu(\phi+\psi)
-\mu( \psi)=\sigma\big[-l\Delta\phi+ l^{-1}(\Psi'(\phi+\psi)-\Psi'(\psi))\big],&\text{in }\Omega\times(0,T).
	\end{cases}
    \label{RT}
\end{align}
For the system \eqref{RT}, we impose the initial conditions
\begin{align}
	\boldsymbol{v}(\cdot,0)=\boldsymbol{v}_0,\quad\phi(\cdot,0)=\phi_0,\qquad\text{in }\Omega.\label{initial-data}
\end{align} 
For the velocity field $\boldsymbol{v}$, we consider the classical no-slip boundary condition 
 \begin{align}
	\boldsymbol{v}=\boldsymbol{0},\qquad\text{on }\Gamma\times(0,T).\label{no-slip}
\end{align}
Concerning the phase-field variable $\phi$,
we take either the homogeneous Dirichlet boundary conditions
\begin{align}
	\phi=\Delta\phi=0,\qquad\text{on }\Gamma\times(0,T),\label{Dirichlet}
\end{align}
or the homogeneous Neumann boundary conditions
\begin{align}	\partial_{\mathbf{n}}\phi=\partial_{\mathbf{n}}\Delta\phi=0,\qquad\text{on }\Gamma\times(0,T),\label{Neumann}
\end{align}
where $\mathbf{n}=\mathbf{n}(x)$ denotes the unit outer normal vector on $\Gamma$ and $\partial_\mathbf{n}$ represents the outward normal derivative on the boundary. 
{We mention that for the standard Cahn--Hilliard equation \eqref{AGG}$_3$--\eqref{AGG}$_4$ with $\boldsymbol{v}=\boldsymbol{0}$, the classical boundary conditions are the homogeneous Neumann boundary conditions \eqref{Neumann}, which guarantee the energy dissipation law 
\[\frac{\mathrm{d}}{\mathrm{d}t}E_{\text{free}}(\varphi(t))=-\int_\Omega|\nabla\mu(t)|^2\,\mathrm{d}x,\quad\forall\,t\in(0,T)\]
and the mass conservation law
\[\int_\Omega \varphi(t)\,\mathrm{d}x=\int_\Omega\varphi_0\,\mathrm{d}x,\quad\forall\, t\in[0,T].\]
Besides, the following Dirichlet boundary conditions are also interesting (cf. \cite{Elliott})
\[\varphi=\varphi_{\text{b}},\quad \mu=\mu_{\text{b}},\quad \text{on }\  \Gamma\times(0,T),\]
which formally infers that
\[\Delta\varphi=l^{-2}\Psi'(\varphi_{\text{b}})-\sigma^{-1}\mu_{\text{b}},\quad\text{on }\ \Gamma\times(0,T).\]
Under such boundary conditions, the following energy dissipation law holds
\[\frac{\mathrm{d}}{\mathrm{d}t}E_\text{D}(\varphi(t))=-\int_\Omega|\nabla(\mu(t)-\mu_{\text{b}}(t))|^2\,\mathrm{d}x,\quad\forall\,t\in(0,T),\]
where $$E_\text{D}(\varphi)=E_{\text{free}}(\varphi)-\int_\Omega \varphi\mu_{\text{b}}\,\mathrm{d}x$$ and $\mu_{\text{b}}$ is defined in $\Omega$ to be harmonic. This infers that in the model of kinetics of phase separation, the evolution of a non-equilibrium composition is to a composition of lower energy (cf. \cite{Elliott}). 
Back to our perturbation system \eqref{RT}, as $\phi$ is the difference of $\varphi$ and $\psi$, that is, $\phi=\varphi-\psi$ and $\psi$ is an equilibrium state, it is reasonable to impose the homogeneous Dirichlet boundary conditions \eqref{Dirichlet} on the perturbation system \eqref{RT}.}

The motivation for investigating the perturbation system \eqref{RT} is related to the study of stability/instability properties of nonhomogeneous incompressible viscous two-phase flows \cite{Chandrasekhar,FPS}. Among them, the Rayleigh--Taylor instability is a well-known gravity-driven phenomenon that may occur when the upper fluid is heavier than the lower one \cite{Rayleigh,T1950}. Below we briefly introduce the Rayleigh--Taylor problem. Take the equilibrium state $(\boldsymbol{0},\psi)=(\boldsymbol{0},\overline{\varphi})$ with an associated equilibrium pressure $\overline{p}$. The profile of the order parameter  $\overline{\varphi}=\overline{\varphi}(x_3)$ only depends on $x_3$, and it satisfies $-1<\overline{\varphi}<1$ in $\overline{\Omega}$ as well as the stationary equation 
	\begin{align}
		(\mu(\overline{\varphi}))''=0.\notag
	\end{align}
	Here, we denote $\overline{\varphi}'=\partial_{x_3}\overline{\varphi}$ and $\overline{\varphi}''=\partial_{x_3}^2\overline{\varphi}$.
	The associated equilibrium pressure $\overline{p}$ is also independent of $(x_1,x_2)$ and is defined by 
	\[\nabla\overline{p}=-(\sigma l\overline{\varphi}''\overline{\varphi}'+\overline{\rho}g)\boldsymbol{e}_3,\quad \text{where} \ \ \overline{\rho}=\frac{\rho_1-\rho_2}{2}\overline{\varphi}+\frac{\rho_1+\rho_2}{2}.\]
	We assume that
	\begin{align}
		\overline{\varphi}'|_{x_3=x_3^0}>0\label{RT-condition}
	\end{align}
	for some point $x_3^0\in \{x_3\,|\,(x_1,x_2,x_3)^\top\in \Omega\}$. Then, the equilibrium density profile $\overline{\rho}$ also satisfies
	\[\overline{\rho}'|_{x_3=x_3^0}>0.\]
Because of \eqref{RT-condition}, there is some region in which the density increases with increasing height $x_3$, or, in other words, the heavier part is above the lighter part of the fluid mixture. Due to the effect of gravity, such density distribution is expected to be unstable under small perturbations. For recent progress on the mathematical theory of the Rayleigh--Taylor problem for nonhomogeneous fluids, we refer to \cite{Guo2010,Guo2011,Hwang2003,JJ2014-Adv,JJ2015-JMFM,JJW2014-CPDE,JJW2017-JFA,JWZ2016-JDE,Li2023} and the references therein. Here, {since} $m>0$, \eqref{RT} yields a perturbation system for the AGG model \eqref{gravitational}, in which the incompressible two-phase flows are driven by gravity and capillary forces, as well as diffusion. When $m=0$, \eqref{RT} reduces to a perturbation system for the Navier--Stokes--Korteweg system, where only the effects due to gravity and capillary forces are taken into account \cite{Li2023,Tan2010}. 

Our aim is to establish the existence and uniqueness of local strong solutions to the perturbation problem \eqref{RT}--\eqref{no-slip}, with \eqref{Dirichlet} (or \eqref{Neumann}). {The main result is summarized in Theorem \ref{Main-1} in the next section.} The proof strategy can be divided into four steps. In the following, we give a sketch for the case with $\phi$ satisfying the homogeneous Dirichlet boundary conditions \eqref{Dirichlet} (the case with homogeneous Neumann boundary conditions \eqref{Neumann} can be handled with minor modifications).
\begin{itemize}
	\item[(1)] \textbf{The regularized problem.} To overcome such difficulties caused by the singular potential $\Psi$, we approximate $\Psi$ by a regular potential $\widehat{\Psi}\in C^6(\mathbb{R})$ (see \eqref{approximate-Psi}) and extend the density function $\rho\in C([-1,1])$ to $\widehat{\rho}\in C^2(\mathbb{R})$ with $\rho_\ast\leq\widehat{\rho}\leq\rho^\ast$ on $\mathbb{R}$ (see \eqref{app_rho}). {Analogously, we also extend the viscosity function $\nu\in C([-1,1])$ to $\widehat{\nu}\in C^2(\mathbb{R})$ with $\nu_\ast\leq\widehat{\nu}\leq\nu^\ast$ on $\mathbb{R}$. Then, we replace $\Psi$, $\rho$ and $\nu$ by $\widehat{\Psi}$, $\widehat{\rho}$ and $\widehat{\nu}$, respectively, and consider the regularized problem \eqref{app-system}. }
	
	\item[(2)] \textbf{Solvability of the linearized problem.} To solve the regularized problem \eqref{app-system}, we apply an iteration scheme similar to that in the proof for \cite[Proposition 10]{JJW}. To this end, we consider a linearized problem \eqref{linearized} that consists of a Navier--Stokes type system \eqref{linear-1} and a fourth-order parabolic equation \eqref{linear-2}. Since the two subsystems are completely decoupled, we can solve them separately. For both subsystems, we apply a suitable Galerkin scheme to establish the existence of weak solutions and then improve the regularity of weak solutions as \cite[Lemma 5]{Cho-Kim} (see Section \ref{section3.1} for details). {In particular, we apply the regularity theory for the linear stationary Navier--Stokes-type system (see Lemma \ref{regularity-theory}) to improve the regularity of $(\boldsymbol{v},q)$.} The uniqueness of solutions can be achieved by the standard energy method.
	
	\item [(3)] \textbf{Construction of approximate solutions.} Based on the solvability of the linearized problem \eqref{linearized}, we iteratively construct a sequence of approximate solutions $\{(\boldsymbol{v}^k,\phi^k,q^k)\}_{k\geq1}$ such that $(\boldsymbol{v}^k,\phi^k,q^k)$ solves the linearized problem \eqref{linearized} with $(\boldsymbol{v}^k,\phi^k,q^k)$ in place of $(\boldsymbol{v},\phi,q)$ and $(\boldsymbol{v}^{k-1},\phi^{k-1})$ in place of $(\widetilde{\boldsymbol{v}},\widetilde{\phi})$. Then we derive uniform estimates for approximate solutions in suitable function spaces (see Lemma \ref{boundedness-1}).  
	
	\item [(4)] \textbf{Passage to the limit.} In order to pass to the limit as $k\to+\infty$ in the linearized problem, we prove that the sequence of approximate solutions is indeed a Cauchy sequence in suitable function spaces (see Lemma \ref{Cauchy-sequence}). This enables us to conclude that the limit function is the unique local strong solution to the regularized problem \eqref{app-system}. It is worth mentioning that, under the assumption $\|\phi_0+\psi\|_{L^\infty}<1$ and the facts
	\begin{align*}
	&\Psi(r)=\widehat{\Psi}(r)\  \text{ on } \ \Big(-\frac{1+\|\phi_0+\psi\|_{L^\infty}}{2},\frac{1+\|\phi_0+\psi\|_{L^\infty}}{2}\Big),\\
	& {\rho(r)=\widehat{\rho}(r)\ \  \text{and}\ \ \nu(r)=\widehat{\nu}(r)\ \text{ on } \ [-1,1],}
	\end{align*}
	 the local strong solution $(\boldsymbol{v},\phi,q)$ to the regularized problem \eqref{app-system} is actually a local strong solution to the original problem \eqref{RT}--\eqref{Dirichlet}. This completes the proof of Theorem \ref{Main-1}.
\end{itemize}
%

\section{Main Results}
\numberwithin{equation}{section}

\subsection{Preliminaries}

First, we introduce some notation and basic tools that will be used in this paper. Let $X$ be a Banach space, its dual space is denoted by $X'$, and the duality pairing is denoted by $\langle\cdot,\cdot\rangle_{X',X}$. Given an interval $J\in [0,+\infty)$, $L^p(J;X)$ with $p\in[1,+\infty]$ denotes the space consisting of Bochner measurable $p$-integrable/essentially bounded functions with values in Banach space $X$.
For $p\in[1,+\infty]$, $W^{1,p}(J;X)$ is the space of all $f\in L^p(J;X)$ with $\partial_t f\in L^p(J;X)$. For $p=2$, we set $H^1(J;X)=W^{1,2}(J;X)$. Throughout the paper, we assume that $\Omega$ is a bounded domain in $\mathbb{R}^3$ with sufficiently smooth boundary $\Gamma=\partial\Omega$. For any $p\in [1,+\infty]$, $L^p(\Omega)$ denotes the Lebesgue space with norm $\|\cdot\|_{L^p}$. For $k\in\mathbb{Z}^+$, $p\in[1,+\infty]$, $W^{k,p}(\Omega)$ denotes the Sobolev space with norm $\|\cdot\|_{W^{k,p}}$ and for $p=2$, we set $H^k(\Omega)=W^{k,2}(\Omega)$. 
The boldface letter $\mathbf{X}$ denotes the vectorial space $X^3$ (or $X^{3\times 3}$) endowed with the usual product structure and the norm is denoted by $\|\cdot\|_{\mathbf{X}}$. For instance, the space $\mathbf{L}^p(\Omega):=L^p(\Omega;\mathbb{R}^3)$
(or $\mathbf{L}^p(\Omega):=L^p(\Omega;\mathbb{R}^{3\times3})$).
For every $f\in (H^1(\Omega))'$, we denote by $\langle f\rangle_\Omega$ the generalized mean value over $\Omega$ defined by $\langle f\rangle_\Omega=|\Omega|^{-1}\langle f,1\rangle_{(H^1(\Omega))',H^1(\Omega)}$. If $f\in L^1(\Omega)$, then $\langle f\rangle_\Omega=|\Omega|^{-1}\int_\Omega f\,\mathrm{d}x$. We also use the notation 
\begin{align*}
	H_0^1(\Omega)&=\big\{u\in H^1(\Omega):\, u=0\text{ on }\Gamma \big\},\\
	H_{(0)}^1(\Omega)&=\big\{u\in H^1(\Omega):\, \langle u\rangle_\Omega=0\big\},\\
	H_D^3&=\big\{u\in H^3(\Omega):\, u=\Delta u=0\ \text{on }\Gamma\big\},\\
	H_N^3&=\big\{u\in H^3(\Omega):\, \partial_{\mathbf{n}}u=0\ \text{on }\Gamma,\ \langle u\rangle_\Omega=0\big\},\\
	H_N^5&=\big\{u\in H^5(\Omega):\, \partial_{\mathbf{n}}u=\partial_{\mathbf{n}}\Delta u=0\ \text{on }\Gamma,\ \langle u\rangle_\Omega=0\big\}.
\end{align*}
By Poincar\'e's inequality (see \cite[Lemmas 1.42 and 1.43]{NASII04}), there exists a constant $C_P>0$ such that  
\begin{align}
	\label{Poincare}
	\|f\|_{L^2} \leq C_P \|\nabla f \|_{\mathbf{L}^2} \ \mbox{ for any }f\in H_0^1(\Omega)\  \text{or} \ f\in H_{(0)}^1(\Omega).
\end{align}
We also mention the following product estimate
\begin{align}
	\|fg\|_{H^1}\leq C\|f\|_{H^1}\|g\|_{H^2},\quad\forall\,f\in H^1(\Omega),\ g\in H^2(\Omega).\label{product}
\end{align}
Next, we introduce the Hilbert spaces of solenoidal vector-valued functions:
\begin{align*}
	\mathbf{L}_\sigma^2(\Omega)=\overline{\mathbb{C}_{0,\sigma}^\infty(\Omega)}^{\mathbf{L}^2(\Omega)},\quad \mathbf{H}_\sigma^1(\Omega)=\overline{\mathbb{C}_{0,\sigma}^\infty(\Omega)}^{\mathbf{H}^1(\Omega)},
\end{align*}
where $\mathbb{C}_{0,\sigma}^\infty(\Omega)$ is the space of divergence free vector fields in $(C_{0}^\infty(\Omega))^3$.
We also use $(\cdot,\cdot)_{\mathbf{L}^2}$ and $\|\cdot\|_{\mathbf{L}^2}$ for the inner product and the norm in $\mathbf{L}^2_\sigma(\Omega)$, respectively. The space $\mathbf{H}_\sigma^1(\Omega)$ is endowed with the inner product and norm $(\boldsymbol{u},\boldsymbol{w})_{\mathbf{H}_\sigma^1}=(\nabla\boldsymbol{u},\nabla\boldsymbol{w})_{\mathbf{L}^2}$ and $\|\boldsymbol{u}\|_{\mathbf{H}_\sigma^1}=\|\nabla\boldsymbol{u}\|_{\mathbf{L}^2}$, respectively. 
{We recall the Korn inequality (cf. \cite{Horgan})
\begin{align}
	\|\nabla \boldsymbol{u}\|_{\mathbf{L}^2}\leq \sqrt{2}\|\mathbb{D}\boldsymbol{u}\|_{\mathbf{L}^2}\leq\sqrt{2}\|\nabla\boldsymbol{u}\|_{\mathbf{L}^2} ,\quad\forall\, \boldsymbol{u}\in \mathbf{H}_\sigma^1(\Omega),\label{Korn}
\end{align} 
which implies that $\|\mathbb{D}\boldsymbol{u}\|_{\mathbf{L}^2}$ is a norm on $\mathbf{H}_\sigma^1(\Omega)$ equivalent to $\|\boldsymbol{u}\|_{\mathbf{H}_\sigma^1}$.}
We introduce the space $\mathbf{H}_\sigma^2(\Omega)=\mathbf{H}^2(\Omega)\cap \mathbf{H}_\sigma^1(\Omega)$ with the inner product $(\boldsymbol{u},\boldsymbol{w})_{\mathbf{H}_\sigma^2}=(\mathbf{A}\boldsymbol{u},\mathbf{A}\boldsymbol{w})_{\mathbf{L}^2}$ and the norm $\|\boldsymbol{u}\|_{\mathbf{H}_\sigma^2}=\|\mathbf{A}\boldsymbol{u}\|_{\mathbf{L}^2}$, where $\mathbf{A}=\mathbb{P}(-\Delta)$ is the Stokes operator and $\mathbb{P}$ is the Leray projection from $\mathbf{L}^2(\Omega)$ onto $\mathbf{L}_\sigma^2(\Omega)$. We recall that there exists a positive constant $C>0$ such that 
\[\|\boldsymbol{u}\|_{\mathbf{H}^2}\leq C\|\boldsymbol{u}\|_{\mathbf{H}_\sigma^2},\quad\forall\, \boldsymbol{u}\in \mathbf{H}_\sigma^2(\Omega).\] 
The dual space of $\mathbf{H}_\sigma^1(\Omega)$ is denoted by  $(\mathbf{H}_\sigma^1(\Omega))'$ and the norm is denoted by $\|\cdot\|_\sharp$.

{Finally, we recall the regularity theory for the system of linear stationary Navier--Stokes-type, which will be frequently used in this paper. 
To this end, we shall consider a weak solution $\boldsymbol{u}\in \mathbf{H}^1(\Omega)$ to problem
\begin{align}
    \begin{cases}
        \displaystyle\sum_{i,j,\alpha,\beta=1}^3\int_\Omega (A_{ij}^{\alpha\beta}(x)\partial_\beta \boldsymbol{u}_j+a_i^\alpha(x))\partial_\alpha \boldsymbol{w}_i\,\mathrm{d}x=\int_\Omega\boldsymbol{f}\cdot\boldsymbol{w}\,\mathrm{d}x,\\[2mm]
        \displaystyle\text{div}\,\boldsymbol{u}=g,\quad\text{in}\ \Omega,
    \end{cases}\notag
\end{align}
for all $\boldsymbol{w}\in \mathbf{H}^1(\Omega)$ and div$\,\boldsymbol{w}=0$ in $\Omega$. Then the following regularity theory holds for $\boldsymbol{u}$ (cf. \cite[Part II, Theorem 1.2]{GM1982}).}

{
\begin{lem}
    \label{regularity-theory}
    Let $\Omega\subset\mathbb{R}^3$ be a bounded domain of class $C^{k+2}$ with $k\geq0$. Assume $a_i^\alpha$, $g\in H^{k+1}(\Omega)$, $\boldsymbol{f}\in \mathbf{H}^k(\Omega)$ and
    \begin{align*}
        &A_{ij}^{\alpha\beta}\in C^{k+1}(\overline{\Omega}), \quad \|A_{ij}^{\alpha\beta}\|_{L^\infty}\leq C,\quad\text{for }1\leq i,j,\alpha,\beta\leq 3,\\
        &\sum_{i,j,\alpha,\beta=1}^3 A_{ij}^{\alpha\beta}(x)\boldsymbol{\xi}_{\alpha}^i\boldsymbol{\xi}_{\beta}^j\geq \widetilde{c}\,|\boldsymbol{\xi}|^2,\quad\forall\,x\in \overline{\Omega}\ \text{ and }\ \boldsymbol{\xi}\in \mathbb{R}^{3\times3},
    \end{align*}
    for some positive constants $C$ and $\widetilde{c}$.
    Then, $u\in \mathbf{H}^{k+2}(\Omega)$, $p\in H^{k+1}(\Omega)$ and
    \begin{align*}
        \|\boldsymbol{u}\|_{\mathbf{H}^{k+2}}+\|p\|_{H^{k+1}}\leq C\Big(\|\boldsymbol{u}\|_{\mathbf
        {L}^2}+\|g\|_{H^{k+1}}+\|\boldsymbol{f}\|_{\mathbf{H}^k}+\sum_{i,\alpha=1}^3\|a_i^\alpha\|_{H^{k+1}}\Big).
    \end{align*}
\end{lem}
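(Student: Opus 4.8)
\medskip
\noindent\textbf{Proof strategy.} This is the classical elliptic regularity result for a generalized (anisotropic) Stokes system, and the plan is to run Nirenberg's difference‑quotient method, bootstrapping in $k$. First one recovers the pressure: the map $\boldsymbol w\mapsto \sum_{i,j,\alpha,\beta=1}^3\int_\Omega(A_{ij}^{\alpha\beta}\partial_\beta\boldsymbol u_j+a_i^\alpha)\partial_\alpha\boldsymbol w_i\,\mathrm dx-\int_\Omega\boldsymbol f\cdot\boldsymbol w\,\mathrm dx$ is a bounded functional on $\mathbf H^1(\Omega)$ that vanishes on all solenoidal test fields, so by de Rham's lemma (equivalently, the closed range of $\mathrm{div}$ on a smooth bounded domain, i.e.\ Ne\v{c}as' inequality / the existence of a bounded right inverse of $\mathrm{div}$) there is a pressure $p\in L^2(\Omega)$, normalized by $\langle p\rangle_\Omega=0$, for which $-\partial_\alpha(A_{ij}^{\alpha\beta}\partial_\beta\boldsymbol u_j)+\partial_i p=\boldsymbol f_i+\partial_\alpha a_i^\alpha$ and $\mathrm{div}\,\boldsymbol u=g$ hold distributionally, together with the natural (conormal) boundary condition encoded in the weak form. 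Subtracting a Bogovski\u{\i} lift $\boldsymbol u_g$ of $g$ (with $\|\boldsymbol u_g\|_{\mathbf H^{k+2}}\le C\|g\|_{H^{k+1}}$) lets us assume $g=0$; testing with $\boldsymbol u-\boldsymbol u_g$ and using the strong ellipticity $\sum A_{ij}^{\alpha\beta}\boldsymbol\xi_\alpha^i\boldsymbol\xi_\beta^j\ge\widetilde c\,|\boldsymbol\xi|^2$ together with the inf--sup inequality yields the base estimate $\|\boldsymbol u\|_{\mathbf H^1}+\|p\|_{L^2}\le C(\|\boldsymbol u\|_{\mathbf L^2}+\|g\|_{H^1}+\|\boldsymbol f\|_{\mathbf L^2}+\sum_{i,\alpha}\|a_i^\alpha\|_{L^2})$.

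\medskip
\noindent\textbf{The case $k=0$.} Here the claim is $\boldsymbol u\in\mathbf H^2(\Omega)$, $p\in H^1(\Omega)$. Fix a cutoff $\eta$ and, writing $D_h$ for a difference quotient in a coordinate direction, test with $\boldsymbol w=D_{-h}(\eta^2 D_h\boldsymbol u)-\boldsymbol z_h$, where $\boldsymbol z_h$ is a Bogovski\u{\i} corrector chosen so that $\boldsymbol w$ is solenoidal (needed because $\mathrm{div}(\eta^2 D_h\boldsymbol u)=\nabla(\eta^2)\cdot D_h\boldsymbol u\ne0$, and, when $g\ne0$, $D_h g$ also has to be absorbed); strong ellipticity absorbs the top‑order term $\int A\,(D_h\nabla\boldsymbol u)\cdot(D_h\nabla\boldsymbol u)$, giving uniform‑in‑$h$ bounds on $\eta D_h\nabla\boldsymbol u$ in $\mathbf L^2$, hence $\boldsymbol u\in\mathbf H^2_{\mathrm{loc}}$, and reading $\nabla p$ off the equation gives $p\in H^1_{\mathrm{loc}}$. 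Near $\Gamma$ one flattens the boundary by a $C^{k+2}$ diffeomorphism (so the transformed coefficients remain $C^{k+1}$, in particular $C^1$ here, and $\mathrm{div}$ becomes a variable‑coefficient first‑order operator), repeats the argument with \emph{tangential} difference quotients to control the tangential derivatives of $\nabla\boldsymbol u$ and of $p$, and recovers the missing normal derivatives $\partial_3\boldsymbol u,\partial_3^2\boldsymbol u$ algebraically from $\mathrm{div}\,\boldsymbol u=g$ and the momentum equations (the relevant linear system in the second normal derivatives being invertible by ellipticity), after which $\partial_3 p$ is read off one momentum component. A partition of unity patches the interior and boundary estimates into $\|\boldsymbol u\|_{\mathbf H^2}+\|p\|_{H^1}\le C(\|\boldsymbol u\|_{\mathbf L^2}+\|g\|_{H^1}+\|\boldsymbol f\|_{\mathbf L^2}+\sum_{i,\alpha}\|a_i^\alpha\|_{H^1})$.

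\medskip
\noindent\textbf{Induction on $k$.} Assume the estimate up to order $k-1$. Under the hypotheses $A\in C^{k+1}(\overline\Omega)$, $a_i^\alpha,g\in H^{k+1}(\Omega)$, $\boldsymbol f\in\mathbf H^k(\Omega)$, the inductive step already gives $\boldsymbol u\in\mathbf H^{k+1}$, $p\in H^k$, so the system may be differentiated once; the differentiated system has the same principal coefficients $A_{ij}^{\alpha\beta}$ (hence the same ellipticity constant $\widetilde c$), coefficients now in $C^k$, divergence constraint $\mathrm{div}(\partial\boldsymbol u)=\partial g\in H^k$, and right‑hand side built from $\partial\boldsymbol f\in\mathbf H^{k-1}$, $\partial a\in H^k$ and the commutator $(\partial A)(\partial^2\boldsymbol u)$, which lies in $\mathbf H^{k-1}$ with norm $\le C\|A\|_{C^{k}}\|\boldsymbol u\|_{\mathbf H^{k+1}}$ by the product estimate \eqref{product} and its higher‑order counterparts. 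Applying the order‑$(k-1)$ estimate to this differentiated system and summing over the first‑order derivatives yields $\|\boldsymbol u\|_{\mathbf H^{k+2}}+\|p\|_{H^{k+1}}\le C(\|\boldsymbol u\|_{\mathbf L^2}+\|g\|_{H^{k+1}}+\|\boldsymbol f\|_{\mathbf H^k}+\sum_{i,\alpha}\|a_i^\alpha\|_{H^{k+1}})$, the constant depending only on $\Omega$, $k$, $\widetilde c$ and $\|A\|_{C^{k+1}}$.

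\medskip
\noindent\textbf{Main obstacle.} The delicate point throughout is the incompressibility constraint: every localization, difference‑quotient or flattening operation destroys $\mathrm{div}\,\boldsymbol w=0$, so admissibility of the test fields must be restored by Bogovski\u{\i}‑type correctors whose norms have to be controlled at precisely the regularity level demanded by each step of the bootstrap; keeping these correctors at the right level, together with the near‑boundary algebraic recovery of the normal derivatives of $\boldsymbol u$ and $p$ from the transformed equations, is the technical heart of the argument. By contrast, the energy/coercivity step is painless here, since the assumed \emph{strong} ellipticity (not merely the Legendre--Hadamard condition) makes the basic $\mathbf H^1$‑estimate immediate, with no recourse to Korn‑ or G\aa{}rding‑type inequalities.
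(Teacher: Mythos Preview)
The paper does not actually prove this lemma; it simply recalls it from the literature, citing \cite[Part II, Theorem 1.2]{GM1982} (Giaquinta--Modica). Your sketch is the classical difference-quotient/Bogovski\u{\i}-corrector argument underlying that reference, and it is a correct outline of how the result is established there, so there is nothing to compare against in the paper itself.
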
}
%
%
%
\subsection{Main results}
For convenience, we denote 
\[\varrho_1=\frac{\rho_1-\rho_2}{2},\quad\varrho_2=\frac{\rho_1+\rho_2}{2}.\]
Furthermore, since the constants {$m>0$}, $l>0$ and $\sigma>0$ have no influence on the subsequent mathematical analysis, 
we may assume, without loss of generality, that 
$${m=l=\sigma=1.}$$ 
Then, our target system can be rewritten as
\begin{align}\label{system}
	\begin{cases}
		\rho(\phi+\psi) \partial_t\boldsymbol{v}+ 	\rho(\phi+\psi)(\boldsymbol{v}\cdot\nabla)\boldsymbol{ v}
		- \varrho_1 ( \nabla \mu(\phi+\psi)\cdot \nabla) \boldsymbol{v}\\[1mm]
      \quad-{\text{div}(2\nu(\phi+\psi)\mathbb{D}\boldsymbol{v})}+\nabla q=-
		 \Delta \phi\nabla(\phi+ \psi)- \Delta\psi  \nabla \phi -g \varrho_1 \phi \boldsymbol{e}_3 ,&\text{in }\Omega\times(0,T),\\
		\mathrm{div}\,\boldsymbol{v} =0 ,&\text{in }\Omega\times(0,T),\\
		\partial_t\phi+  \boldsymbol{v} \cdot\nabla (\phi+\psi)= \Delta (  \mu(\phi+\psi)
		-\mu( \psi)),&\text{in }\Omega\times(0,T),\\
        \mu(\phi+\psi)
-\mu( \psi)=\Psi'(\phi+\psi)-\Psi'(\psi)-\Delta\phi,&\text{in }\Omega\times(0,T),
	\end{cases} 
\end{align}
where $g>0$ and $-1<\psi<1$ in $\overline{\Omega}$, subject to initial-boundary conditions \eqref{initial-data}, \eqref{no-slip} together with \eqref{Dirichlet} (or \eqref{Neumann}).

In order to distinguish the problem \eqref{system} with different boundary conditions, we use the following notation: 
The system \eqref{system} equipped with \eqref{initial-data}, \eqref{no-slip} and \eqref{Dirichlet}  is called "problem $(\mathbf{D})$", 
while the system  \eqref{system} equipped with \eqref{initial-data}, \eqref{no-slip} and \eqref{Neumann} is called "problem $(\mathbf{N})$". 
Now, we state our main result in this paper
about the local-in-time existence and uniqueness of strong solutions to problem $(\mathbf{D})$ and problem $(\mathbf{N})$.
\begin{thm} \label{Main-1}
	Let $\Omega$  be a $C^7 $-smooth bounded domain,
	$\psi \in C^5(\overline{\Omega})$ with $-1<\psi<1$ in $\overline{\Omega}$,
	$g>0 $ be a constant and $\Psi\in C^6(-1,1)$.
	Then,
	for any given initial data $( \boldsymbol{v}_0,\phi_0 )\in \mathbf{H}^2_\sigma(\Omega)\times (H_D^3\cap H^5(\Omega))$, satisfying
	\begin{equation}
		\label{250123}
		\begin{aligned}
			&  \|\phi_0+\psi\|_{L^\infty}< 1 ,
		\end{aligned}
	\end{equation}
	there exists a time $T_0>0$ and problem $(\mathbf{D})$ admits a unique strong solution $(\boldsymbol{v},\phi,q)$ on $[0,T_0]$, enjoying the following regularity
	\begin{align*}
	&\boldsymbol{v}\in C([0,T_0];\mathbf{H}_\sigma^2(\Omega))\cap L^2(0,T_0;\mathbf{H}^3(\Omega)),\quad \partial_t \boldsymbol{v}\in C([0,T_0];\mathbf{L}_\sigma^2(\Omega))\cap L^2(0,T_0;\mathbf{H}_\sigma^1(\Omega)),\\
	&\phi\in C([0,T_0];H_D^3\cap H^5(\Omega))\cap L^2(0,T_0;H^7(\Omega)),\quad \partial_t\phi\in C([0,T_0];H_0^1(\Omega))\cap L^2(0,T_0;H_D^3),\\
	&\partial_{t}(\rho(\phi+\psi)\partial_t\boldsymbol{v})\in L^2(0,T_0;(\mathbf{H}_\sigma^1(\Omega))'),\quad q\in C([0,T_0];H^1(\Omega))\cap L^2(0,T_0;H^2(\Omega)).
	\end{align*}
	While for the problem $(\mathbf{N})$, the initial data should be modified as $( \boldsymbol{v}_0,\phi_0 )\in \mathbf{H}^2_\sigma(\Omega)\times H_N^5$ satisfying \eqref{250123} and the regularity of $\phi$ should be modified as
	\[\phi\in C([0,T_0]; H_N^5)\cap L^2(0,T_0;H^7(\Omega)),\quad \partial_t\phi\in C([0,T_0];H_{(0)}^1(\Omega))\cap L^2(0,T_0;H_N^3).\]
	For both cases, the solution $(\boldsymbol{ v},\phi)$ satisfies the initial condition 
	$(\boldsymbol{ v},\phi)|_{t=0}=(\boldsymbol{ v}_0,\phi_0)$ almost everywhere in $\Omega$.
	Moreover,  $(\boldsymbol{v},\phi,  q)$ further enjoys the following   property:
	$$0< \inf_{x\in \overline{\Omega}}\{ \varrho_1( \phi(x,t)+\psi)+\varrho_2\},\quad \int_\Omega  {q}(x,t)\,\mathrm{d}x=0,\quad\mbox{ for all }t\in (0,T_0).$$
\end{thm}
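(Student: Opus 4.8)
The plan is to follow the four-step strategy outlined in the introduction: regularize the singular potential and extend the density/viscosity functions to all of $\mathbb{R}$, solve a decoupled linearized problem via Galerkin approximation and elliptic/parabolic regularity, iterate to build approximate solutions with uniform bounds, and pass to the limit by a contraction argument. I would begin by fixing the approximation parameters tied to the initial data: since $\|\phi_0+\psi\|_{L^\infty}<1$, choose $\delta>0$ so that $\|\phi_0+\psi\|_{L^\infty}<1-2\delta$ and define $\widehat{\Psi}\in C^6(\mathbb{R})$ agreeing with $\Psi$ on $(-(1+\|\phi_0+\psi\|_{L^\infty})/2,\,(1+\|\phi_0+\psi\|_{L^\infty})/2)$ (an interval compactly inside $(-1,1)$), with bounded derivatives up to order six outside; similarly extend $\rho,\nu$ to $\widehat{\rho},\widehat{\nu}\in C^2(\mathbb{R})$ with the stated uniform upper/lower bounds, agreeing with $\rho,\nu$ on $[-1,1]$. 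The regularized system \eqref{app-system} then has globally Lipschitz (with controlled constants) nonlinearities, so one does not worry about $\phi+\psi$ leaving $(-1,1)$ during the construction; only at the very end does one recover a solution of the original system on a possibly shorter time interval by a continuity/bootstrap argument showing $\|\phi(t)+\psi\|_{L^\infty}<1$ for $t$ small.

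Next I would set up the linearized problem \eqref{linearized}: given $(\widetilde{\boldsymbol{v}},\widetilde{\phi})$ from a suitable class, the phase-field equation \eqref{linear-2} becomes a fourth-order linear parabolic equation for $\phi$ with $\widetilde{\phi}$-dependent (smooth, bounded) coefficients and source, while the velocity equation \eqref{linear-1} becomes a linear Navier--Stokes-type system for $(\boldsymbol{v},q)$ with variable density coefficient $\widehat{\rho}(\widetilde{\phi}+\psi)$ and variable viscosity $\widehat{\nu}(\widetilde{\phi}+\psi)$. Because the two subsystems are decoupled once $(\widetilde{\boldsymbol{v}},\widetilde{\phi})$ is frozen, I solve the parabolic equation first by a Galerkin scheme in the eigenbasis of $(-\Delta)$ under Dirichlet conditions (respectively Neumann with zero mean), obtaining weak solutions, then bootstrap regularity by testing with higher powers of the Laplacian and using \eqref{product} and parabolic smoothing to reach $\phi\in C([0,T];H^5)\cap L^2(0,T;H^7)$ with $\partial_t\phi\in C([0,T];H^1_0)\cap L^2(0,T;H^3_D)$; the Dirichlet conditions $\phi=\Delta\phi=0$ are exactly what make the even-order Laplacian powers self-adjoint with compatible boundary data. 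For the velocity part I use a Galerkin scheme in the basis of the Stokes operator $\mathbf{A}$, obtain energy estimates for $\boldsymbol{v}$ and $\partial_t\boldsymbol{v}$ (differentiating the equation in time, using that $\partial_t\widehat{\rho}(\widetilde{\phi}+\psi)$ is controlled by $\partial_t\widetilde{\phi}$), and then invoke Lemma \ref{regularity-theory} with $A_{ij}^{\alpha\beta}=\widehat{\nu}(\widetilde{\phi}+\psi)$-type coefficients (verifying the ellipticity constant via the lower bound $\nu_\ast$ and Korn's inequality \eqref{Korn}) to upgrade $(\boldsymbol{v},q)$ to $\mathbf{H}^3\times H^2$ in the $L^2$-in-time sense and recover the pressure with zero mean. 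Uniqueness of the linearized solution is the usual energy estimate on the difference.

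For step (3) I iterate: set $(\boldsymbol{v}^0,\phi^0)=(\boldsymbol{v}_0,\phi_0)$ (suitably extended in time) and define $(\boldsymbol{v}^k,\phi^k,q^k)$ as the solution of the linearized problem with data $(\widetilde{\boldsymbol{v}},\widetilde{\phi})=(\boldsymbol{v}^{k-1},\phi^{k-1})$. The key is a uniform bound (Lemma \ref{boundedness-1}): on a time interval $[0,T_0]$ with $T_0$ small but independent of $k$, one shows all iterates lie in a fixed ball of the solution space, by a Gronwall/bootstrap argument in which the nonlinear terms are absorbed using the smallness of $T_0$ and the uniform control on the regularized coefficients; compatibility conditions on $(\boldsymbol{v}_0,\phi_0)$ are needed so that $\partial_t\boldsymbol{v}|_{t=0}$ and $\partial_t\phi|_{t=0}$ have the right regularity. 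Then for step (4) one estimates differences $(\boldsymbol{v}^{k+1}-\boldsymbol{v}^k,\phi^{k+1}-\phi^k,q^{k+1}-q^k)$ in a weaker norm (e.g.\ $\boldsymbol{v}$ in $L^\infty_t\mathbf{L}^2_\sigma\cap L^2_t\mathbf{H}^1_\sigma$, $\phi$ in $L^\infty_t H^1\cap L^2_t H^3$), exploiting the uniform bounds from Lemma \ref{boundedness-1} to get a contraction factor $\sim C T_0^{\gamma}$; shrinking $T_0$ makes the map a contraction, so the iterates form a Cauchy sequence and converge, with the uniform bounds guaranteeing the limit retains the full regularity claimed. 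Finally, the continuity-in-time of $\phi$ in $L^\infty$ (from $H^5\hookrightarrow C(\overline{\Omega})$) plus $\phi(0)=\phi_0$ gives $\|\phi(t)+\psi\|_{L^\infty}<1$ for $t\in[0,T_0]$ after possibly shrinking $T_0$ once more, whence $\widehat{\Psi},\widehat{\rho},\widehat{\nu}$ coincide with $\Psi,\rho,\nu$ along the solution and $(\boldsymbol{v},\phi,q)$ solves the original problem; the strict positivity $\varrho_1(\phi+\psi)+\varrho_2\geq\rho_\ast>0$ and $\int_\Omega q\,\mathrm{d}x=0$ follow directly, and uniqueness for the original problem is again an energy estimate on the difference of two solutions. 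I expect the main obstacle to be step (3)--(4): obtaining the uniform-in-$k$ estimates and the contraction simultaneously, since the quasilinear coupling through the variable density $\widehat{\rho}(\phi+\psi)$ multiplying $\partial_t\boldsymbol{v}$ forces one to differentiate the momentum equation in time and carefully track the term $\partial_t(\widehat{\rho}(\phi+\psi)\partial_t\boldsymbol{v})$ in $(\mathbf{H}^1_\sigma)'$, while the capillary term $\Delta\phi\,\nabla(\phi+\psi)$ couples the highest derivatives of $\phi$ into the momentum balance and must be controlled using the parabolic gain of regularity for $\phi$ — balancing these requires choosing the function spaces and the order in which estimates are closed very carefully.
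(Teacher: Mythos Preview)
Your proposal is correct and follows essentially the same approach as the paper: regularize $\Psi,\rho,\nu$, solve the decoupled linearized subsystems by Galerkin schemes with Stokes/Laplacian eigenbases and upgrade regularity via Lemma~\ref{regularity-theory} and elliptic theory, iterate with uniform bounds on a short time interval, and close by a contraction in a weaker norm before restoring the original nonlinearities via $\|\phi(t)+\psi\|_{L^\infty}<1$. The only noticeable deviation is your choice of contraction norm ($\boldsymbol{v}\in L^\infty_t\mathbf{L}^2_\sigma\cap L^2_t\mathbf{H}^1_\sigma$, $\phi\in L^\infty_t H^1\cap L^2_t H^3$): the paper works one derivative higher, taking $\boldsymbol{v}\in L^\infty_t\mathbf{H}^1_\sigma\cap L^2_t\mathbf{H}^2_\sigma$ with $\partial_t\boldsymbol{v}\in L^2_t\mathbf{L}^2_\sigma$ and $\phi\in L^\infty_t H^2\cap L^2_t H^4$, which makes it easier to control the difference terms involving $\nabla\Delta\overline{\phi}^k$ and the variable-viscosity contribution $\mathrm{div}\big((\widehat{\nu}(\phi^k+\psi)-\widehat{\nu}(\phi^{k-1}+\psi))\mathbb{D}\boldsymbol{v}^k\big)$ in $\mathbf{L}^2$; at your regularity level these would have to be handled by duality and may not close as cleanly.
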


\begin{rem}
    {We mention that in the recent paper \cite{Giorgini2022IFB}, the author studied the AGG model \eqref{AGG} with no-slip boundary condition \eqref{no-slip} for velocity $\boldsymbol{v}$ and homogeneous Neumann boundary conditions for phase-field variable $\phi$ as well as chemical potential $\mu$. 
    The author proved the existence of local-in-time strong solutions originating from an initial datum $(\boldsymbol{v}_0,\varphi_0)\in \mathbf{H}_\sigma^1(\Omega)\times H^2(\Omega)$ such that 
    \[\|\varphi_0\|_{L^\infty}\leq 1,\quad \ |\langle\varphi_0\rangle_\Omega|<1,\quad-\Delta\varphi_0+\Psi'(\varphi_0)\in H^1(\Omega)\ \ \text{and}\ \partial_{\mathbf{n}}\varphi_0=0\ \text{on}\ \Gamma,\]
    based on a semi-Galerkin approach. Differ from the result in \cite{Giorgini2022IFB}, in this paper, by usage of an iteration scheme, we establish the local strong well-posedness for system \eqref{AGG} with no-slip boundary condition \eqref{no-slip}, homogeneous Dirichlet boundary conditions \eqref{Dirichlet} (or homogeneous Neumann boundary conditions \eqref{Neumann}), originating from a more regular initial datum.}
\end{rem}

\section{Local Strong Well-posedness}
\numberwithin{equation}{section}

In this section, we prove Theorem \ref{Main-1} about the local-in-time existence and uniqueness of strong solutions to problems $(\mathbf{D})$ and $(\mathbf{N})$. The proof will be divided into several steps. First, we approximate the singular function $\Psi\in C^6(-1,1)$ by a regular function $\widehat{\Psi}\in C^6(\mathbb{R})$, that is
\begin{align}
	\widehat{\Psi}(r):=
	\begin{cases}
		\Psi(-1+\delta)+\sum_{i=1}^6 \dfrac{\Psi^{(i)}(-1+\delta)}{i!}(r+1-\delta)^i&\text{if }r\leq -1+\delta,\\[1mm]
		\Psi(r)&\text{if } -1+\delta<r<1-\delta,\\
		\Psi(1-\delta)+\sum_{i=1}^6 \dfrac{\Psi^{(i)}(1-\delta)}{i!}(r-1+\delta)^i&\text{if }r\geq 1-\delta,
	\end{cases}\label{approximate-Psi}
\end{align}
with
\[\delta=\frac{1-\|\phi_0+\psi\|_{L^\infty}}{2}.\]
{In order to keep the positivity of $\rho(r)$ and $\nu(r)$, we replace the linear density function $\rho$ and linear viscosity function $\nu$ by smooth extensions $\widehat{\rho}$, $\widehat{\nu}:\mathbb{R}\to\mathbb{R}^+$, satisfying 
\begin{align}
	\begin{array}{l}
	\widehat{\rho}(r)=\rho(r)\ \ \text{and}\ \ \widehat{\nu}(r)=\nu(r)\quad\forall\,r\in[-1,1],\\[1mm]
	\rho_\ast\leq \widehat{\rho}(r)\leq \rho^\ast,\quad |\widehat{\rho}^{(j)}(r)|\leq \widetilde{C}_j,\quad j=1,2,\quad\forall\,r\in\mathbb{R},\\ [1mm]
	\nu_\ast\leq \widehat{\nu}(r)\leq \nu^\ast,\quad |\widehat{\nu}^{(j)}(r)|\leq \widetilde{C}_j,\quad j=1,2,\quad\forall\,r\in\mathbb{R},
	\end{array}\label{app_rho}
\end{align}
for some positive constants $\rho_\ast$, $\rho^\ast$, $\nu_\ast$, $\nu^\ast$, $\widetilde{C}_1$ and $\widetilde{C}_2$.}
Replacing $\Psi$, $\rho$ and $\nu$ in \eqref{system} by $\widehat{\Psi}$, $\widehat{\rho}$ and $\widehat{\nu}$, respectively, we consider the following approximating problem
	\begin{align}\label{app-system}
	\begin{cases}
		\widehat{\rho}(\phi+\psi) \partial_t\boldsymbol{v}- {\text{div}(2\widehat{\nu}(\phi+\psi)\mathbb{D}\boldsymbol{v})}
		+\nabla q 
		=\boldsymbol{g},&\text{in }\Omega\times(0,T),\\
		\mathrm{div}\,\boldsymbol{v} =0,&\text{in }\Omega\times(0,T), \\
	\partial_t \phi+ \Delta^2 \phi=f,&\text{in }\Omega\times(0,T),\\
	\boldsymbol{v}=\boldsymbol{0},\quad
	\phi=\Delta\phi=0,&\text{on }\Gamma\times(0,T),\\
	\boldsymbol{v}|_{t=0}=\boldsymbol{v}_0,\quad\phi|_{t=0}=\phi_0,&\text{in }\Omega,
	\end{cases} 
\end{align}
where
\begin{align*}
	\boldsymbol{g}&=-  \widehat{\rho}(\phi+\psi)(\boldsymbol{v}\cdot\nabla)\boldsymbol{ v}
	+\varrho_1  (\nabla \mu(\phi+\psi)\cdot \nabla )\boldsymbol{v}-
	 \Delta \phi\nabla(\phi+ \psi)- \Delta\psi  \nabla \phi -g \varrho_1 \phi \boldsymbol{e}_3 ,\\
	f&=\Delta(\widehat{\Psi}'(\phi+\psi)-\widehat{\Psi}'(\psi))
	- { \boldsymbol{v} }\cdot\nabla (\phi+\psi).
\end{align*}
For the approximating problem \eqref{app-system}, we have the following result about the existence and uniqueness of local strong solutions.
\begin{pro} \label{solve-approximating}
	Let $\Omega$  be a $C^7 $-smooth bounded domain, 
	$\psi \in C^5(\overline{\Omega})$ with $-1<\psi<1$ in $\overline{\Omega}$ and {$g>0$} be a constant. Then, for any initial data $(\boldsymbol{v}_0,\phi_0)\in \mathbf{H}_\sigma^2(\Omega)\times (H_D^3\cap H^5(\Omega) )$,
	there exists a time $T_1\in(0,1)$ and problem \eqref{app-system} admits a unique strong solution $(\boldsymbol{v},\phi,q)$ on $[0,T_1]$ enjoying the following regularity
	\begin{align*}
		&\boldsymbol{v}\in C([0,T_1];\mathbf{H}_\sigma^2(\Omega))\cap L^2(0,T_1;\mathbf{H}^3(\Omega)),\quad \partial_t \boldsymbol{v}\in C([0,T_1];\mathbf{L}_\sigma^2(\Omega))\cap L^2(0,T_1;\mathbf{H}_\sigma^1(\Omega)),\\
		&\phi\in C([0,T_1];H_D^3\cap H^5(\Omega))\cap L^2(0,T_1;H^7(\Omega)),\quad \partial_t\phi\in C([0,T_1];H_0^1(\Omega))\cap L^2(0,T_1;H_D^3),\\
		&\partial_{t}(\widehat{\rho}(\phi+\psi)\partial_t\boldsymbol{v})\in L^2(0,T_1;(\mathbf{H}_\sigma^1(\Omega))'),\quad q\in C([0,T_1];H^1(\Omega))\cap L^2(0,T_1;H^2(\Omega)),
	\end{align*}
	with $\int_\Omega  {q}(x,t)\,\mathrm{d}x=0$ for all $t\in(0,T_1)$. Moreover, the solution $(\boldsymbol{ v},\phi)$ satisfies the initial condition $(\boldsymbol{ v},\phi)|_{t=0}=(\boldsymbol{ v}_0,\phi_0)$ almost everywhere in $\Omega$.
	
\end{pro}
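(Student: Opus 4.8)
The plan is to prove Proposition \ref{solve-approximating} by an iteration (fixed-point) scheme built on a \emph{linearized} version of \eqref{app-system} in which the density, the viscosity, and all lower-order nonlinear terms are frozen at a reference pair. Given $(\widetilde{\boldsymbol v},\widetilde\phi)$ lying in the function class in which the solution is sought, one replaces $(\boldsymbol v,\phi)$ by $(\widetilde{\boldsymbol v},\widetilde\phi)$ everywhere in \eqref{app-system} except inside the two evolution terms $\widehat\rho(\cdot)\partial_t\boldsymbol v$ and $\partial_t\phi$, and considers the resulting linear system together with \eqref{initial-data}, \eqref{no-slip}, \eqref{Dirichlet}. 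As already noted in the introduction, this linear system \emph{decouples}: the fourth-order equation $\partial_t\phi+\Delta^2\phi=\widetilde f$ does not involve $\boldsymbol v$, while the Navier--Stokes--type block for $(\boldsymbol v,q)$ has coefficients $\widehat\rho(\widetilde\phi+\psi)$, $\widehat\nu(\widetilde\phi+\psi)$ depending only on the \emph{given} datum $\widetilde\phi$. The two subproblems are therefore solved separately.

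For the linear fourth-order parabolic equation under $\phi=\Delta\phi=0$ on $\Gamma$, I would run a Galerkin scheme in the eigenbasis of $\Delta^2$ with these boundary conditions, establish energy estimates at successive orders (testing the Galerkin equations against $\phi$, then $\Delta^2\phi$, $\Delta^4\phi$, and $\partial_t\Delta^2\phi$), and conclude --- for $\widetilde f\in L^2(0,T;H^3)$ and $\phi_0\in H_D^3\cap H^5$ --- that $\phi\in L^\infty(0,T;H^5)\cap L^2(0,T;H^7)$ with $\partial_t\phi\in L^\infty(0,T;H_0^1)\cap L^2(0,T;H_D^3)$, recovering time continuity by a Lions--Magenes interpolation argument. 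For the Navier--Stokes--type block I would use a Galerkin scheme in the eigenbasis of the Stokes operator $\mathbf A$: the basic energy estimate together with testing by $\partial_t\boldsymbol v$ and by $\partial_t$ of the equation against $\partial_t\boldsymbol v$ yields $\boldsymbol v\in L^\infty(0,T;\mathbf H^1_\sigma(\Omega))$ and $\partial_t\boldsymbol v\in L^\infty(0,T;\mathbf L^2_\sigma(\Omega))\cap L^2(0,T;\mathbf H^1_\sigma(\Omega))$. The top spatial regularity $\boldsymbol v\in L^\infty(0,T;\mathbf H^2_\sigma(\Omega))\cap L^2(0,T;\mathbf H^3(\Omega))$ and $q\in L^\infty(0,T;H^1(\Omega))\cap L^2(0,T;H^2(\Omega))$ is then extracted by freezing time and reading the momentum equation as a stationary Stokes-type system with $C^1$-in-$x$ (indeed $C^2$, since $\widetilde\phi\in H^5\hookrightarrow C^3(\overline\Omega)$ and $\widehat\nu\in C^2$) coefficient and right-hand side $\widetilde{\boldsymbol g}-\widehat\rho(\widetilde\phi+\psi)\partial_t\boldsymbol v$, and invoking Lemma \ref{regularity-theory} with $k=0$ and $k=1$; the normalization $\int_\Omega q\,\mathrm{d}x=0$ is imposed here. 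Uniqueness for each linear subproblem is a one-line energy estimate on the difference of two solutions.

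Armed with solvability of the linearized problem, I would set $(\boldsymbol v^0,\phi^0)\equiv(\boldsymbol v_0,\phi_0)$ and define $(\boldsymbol v^k,\phi^k,q^k)$ to be the solution of the linearized problem with reference pair $(\boldsymbol v^{k-1},\phi^{k-1})$. The key quantitative step (Lemma \ref{boundedness-1}) is to choose $T_1\in(0,1)$ small, depending only on $\|(\boldsymbol v_0,\phi_0)\|_{\mathbf H^2_\sigma\times(H_D^3\cap H^5)}$, $\psi$, $g$ and the structural constants in \eqref{approximate-Psi}--\eqref{app_rho}, so that the whole sequence stays in a fixed ball $\mathcal B$ of the solution space; this is an induction in which the genuinely superlinear contributions of $\widetilde{\boldsymbol g}$ and $\widetilde f$ (the transport terms, $\Delta\phi\,\nabla(\phi+\psi)$, $(\nabla\mu(\phi+\psi)\cdot\nabla)\boldsymbol v$, $\Delta(\widehat\Psi'(\phi+\psi)-\widehat\Psi'(\psi))$) pick up a positive power of $T_1$ after integration in time and are thus absorbed, using repeatedly the three-dimensional Sobolev embeddings, the product estimate \eqref{product}, the Korn inequality \eqref{Korn}, and the norm equivalence on $\mathbf H^2_\sigma(\Omega)$. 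Then (Lemma \ref{Cauchy-sequence}) I would estimate the differences $(\boldsymbol v^{k+1}-\boldsymbol v^k,\phi^{k+1}-\phi^k,q^{k+1}-q^k)$ in the weaker norm of $C([0,T_1];\mathbf L^2_\sigma(\Omega)\times H^1(\Omega))\cap L^2(0,T_1;\mathbf H^1_\sigma(\Omega)\times H_D^3)$ and, after possibly shrinking $T_1$ once more, obtain a contraction factor; here the uniform bounds of Lemma \ref{boundedness-1} and the Lipschitz dependence of $r\mapsto\widehat\rho(r),\widehat\nu(r),\widehat\Psi'(r)$ ($C^2$ and $C^6$, respectively) are used. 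Consequently $(\boldsymbol v^k,\phi^k,q^k)$ converges strongly in this weaker space and, by the uniform bounds, weakly-$\ast$ in the strong space; the limit solves \eqref{app-system} with the claimed regularity, the lower bound $\inf_{\overline\Omega}\widehat\rho(\phi+\psi)\ge\rho_\ast>0$ is immediate from \eqref{app_rho}, while $\int_\Omega q\,\mathrm{d}x=0$ and the additional regularity $\partial_t(\widehat\rho(\phi+\psi)\partial_t\boldsymbol v)\in L^2(0,T_1;(\mathbf H^1_\sigma(\Omega))')$ pass to the limit (the latter read off directly from the momentum equation). Uniqueness of the strong solution of \eqref{app-system} is the same energy estimate as in the Cauchy step. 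The Neumann case \eqref{Neumann} is identical after replacing $H_D^3$ by $H_N^3$, $H_D^3\cap H^5$ by $H_N^5$, and the Dirichlet-Laplacian basis by the Neumann one, working throughout in the zero-mean subspaces.

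The step I expect to be the main obstacle is Lemma \ref{boundedness-1}: closing the nonlinear a priori estimates at the \emph{top} regularity level on a time interval independent of $k$. Two features make this heavier than a textbook parabolic bound. First, the density $\widehat\rho(\widetilde\phi+\psi)$ multiplies $\partial_t\boldsymbol v$, so to reach $\partial_t\boldsymbol v\in L^2(0,T_1;\mathbf H^1_\sigma(\Omega))$ one must differentiate the momentum equation in time, which generates the term $\widehat\rho'(\widetilde\phi+\psi)\,\partial_t\widetilde\phi\,\partial_t\boldsymbol v$; controlling it forces the scheme to propagate $\partial_t\phi\in L^\infty(0,T_1;H_0^1)\cap L^2(0,T_1;H_D^3)$ in tandem with the velocity estimates. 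Second, the viscosity is only $C^2$, so the highest-order elliptic estimate via Lemma \ref{regularity-theory} with $k=1$ requires $\widehat\nu(\widetilde\phi+\psi)\in C^2(\overline\Omega)$ with controlled norm, which needs $\widetilde\phi\in H^5$ (equivalently $C^3(\overline\Omega)$) --- so the two subproblems, decoupled at the level of the equations, become coupled at the level of the estimates. Bookkeeping these couplings and verifying that every genuinely nonlinear term is bounded by $C(\mathcal B)\,T_1^{\gamma}$ for some $\gamma>0$ is the technical heart of the proof.
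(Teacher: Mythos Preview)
Your proposal is correct and follows essentially the same route as the paper: linearize by freezing $(\widetilde{\boldsymbol v},\widetilde\phi)$ in all coefficients and lower-order terms, solve the decoupled biharmonic and nonhomogeneous Stokes-type subproblems by Galerkin approximation plus Lemma~\ref{regularity-theory}, then iterate and close via a uniform-ball estimate (Lemma~\ref{boundedness-1}) and a contraction in a weaker norm (Lemma~\ref{Cauchy-sequence}). The only differences are cosmetic: the paper tests the biharmonic Galerkin system with $-\Delta\phi_n$, $\partial_t\phi_n$, and (after differentiating in time) $\partial_t\phi_n$, $-\Delta\partial_t\phi_n$ rather than with higher powers of $\Delta$, and runs the contraction in the slightly stronger norm $\mathcal V_T^\ast$ with $\phi\in L^\infty_tH^2\cap L^2_tH^4$ and $\boldsymbol v\in L^\infty_t\mathbf H^1_\sigma\cap L^2_t\mathbf H^2_\sigma$, which spares one integration by parts when handling the $\nabla\Delta\overline\phi^k$ contribution from $\nabla\mu$.
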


\begin{rem}
	\label{solve-N}
	When the homogeneous Dirichlet boundary condition \eqref{Dirichlet} is replaced by the homogeneous Neumann boundary condition \eqref{Neumann}, that is 
		\begin{align}\label{app-system-N}
		\begin{cases}
			\widehat{\rho}(\phi+\psi) \partial_t\boldsymbol{v}- {\text{div}(2\widehat{\nu}(\phi+\psi)\mathbb{D}\boldsymbol{v})}
			+\nabla q 
			=\boldsymbol{g},&\text{in }\Omega\times(0,T),\\
			\mathrm{div}\,\boldsymbol{v} =0,&\text{in }\Omega\times(0,T), \\
			\partial_t \phi+\Delta^2 \phi=f,&\text{in }\Omega\times(0,T),\\
			\boldsymbol{v}=\boldsymbol{0},\quad
			\partial_{\mathbf{n}}\phi=\partial_{\mathbf{n}}\Delta\phi=0,&\text{on }\Gamma\times(0,T),\\
			\boldsymbol{v}|_{t=0}=\boldsymbol{v}_0,\quad\phi|_{t=0}=\phi_0,&\text{in }\Omega,
		\end{cases} 
	\end{align}
	 we can also establish a similar result as we obtained in Proposition \ref{solve-approximating}. More precisely, under the settings of Proposition \ref{solve-approximating}, for any initial data $(\boldsymbol{v}_0,\phi_0)\in \mathbf{H}_\sigma^2(\Omega)\times H_N^5$, there exists a time $T_1^\ast\in(0,1)$ and problem \eqref{app-system-N} admits a unique strong solution $(\boldsymbol{v},\phi,q)$ on $[0,T_1^\ast]$ satisfying the property in Proposition \ref{solve-approximating} except that the regularity of $\phi$ should be modified as
	 \[\phi\in C([0,T_1^\ast];H_N^5)\cap L^2(0,T_1^\ast;H^7(\Omega)),\quad \partial_t\phi\in C([0,T_1^\ast];H_{(0)}^1(\Omega))\cap L^2(0,T_1^\ast;H_N^3).\]
	 Since the proof of the results above is very similar to the proof of Proposition \ref{solve-approximating}, we will only point out the difference between the two cases in several remarks. 
\end{rem}

As mentioned in Introduction, we adapt an iteration scheme (see, e.g., \cite[Proposition 10]{JJW}) to prove Proposition \ref{solve-approximating} in three steps. The first step is to solve  the following linearized problem for some given $\widetilde{\boldsymbol{v}}$ and $\widetilde{\phi}$:
\begin{align}
	\begin{cases}
	\widehat{\rho}(\widetilde{\phi}+\psi) \partial_t\boldsymbol{v}-{\text{div}(2\widehat{\nu}(\widetilde{\phi}+\psi)\mathbb{D}\boldsymbol{v})}
	+\nabla q 
	=\widetilde{\boldsymbol{g}},&\text{in }\Omega\times(0,T),\\
	\mathrm{div}\,\boldsymbol{v} =0, &\text{in }\Omega\times(0,T),\\
	\partial_t \phi+ \Delta^2 \phi=\widetilde{f},&\text{in }\Omega\times(0,T),\\
	\boldsymbol{v}=\boldsymbol{0},\quad\phi=\Delta\phi=0,&\text{on }\Gamma\times(0,T),\\
	\boldsymbol{v}|_{t=0}=\boldsymbol{v}_0,\quad\phi|_{t=0}=\phi_0,&\text{in }\Omega,
	\end{cases}\label{linearized}
\end{align}
where 
\begin{align*}
	\widetilde{\boldsymbol{g}}&=	- 	\widehat{\rho}(\widetilde{\phi}+\psi) ( \widetilde{\boldsymbol{v}}\cdot\nabla)\widetilde{\boldsymbol{v}}
	+\varrho_1(  \nabla \mu(\widetilde{\phi}+\psi)\cdot \nabla) \widetilde{\boldsymbol{v}}-
	 \Delta \widetilde{\phi}\nabla(\widetilde{\phi}+ \psi)- \Delta\psi \nabla
	\widetilde{\phi} -g \varrho_1 \widetilde{ \phi} \boldsymbol{e}_3
,\\
	\widetilde{f}&=	\Delta(\widehat{\Psi}'(\widetilde{\phi}+\psi)-\widehat{\Psi}'(\psi))
	- \widetilde{ \boldsymbol{v} }\cdot\nabla (\widetilde{\phi}+\psi).
\end{align*}
Before establishing the solvability of the linearized problem \eqref{linearized}, we verify the regularity of $\widetilde{\boldsymbol{g}}$ and $\widetilde{f}$.

\begin{lem}
	\label{regularity}
	Let $(\widetilde{\boldsymbol{v}},\widetilde{\phi})$ satisfy
	\begin{align}
		\begin{array}{l}
		\widetilde{\boldsymbol{v}}\in {{L}^\infty(0,T;\mathbf{H}_\sigma^2(\Omega))}\cap L^2(0,T;\mathbf{H}^3(\Omega)),\quad \partial_t \widetilde{\boldsymbol{v}}\in {L^\infty(0,T;\mathbf{L}_\sigma^2(\Omega))}\cap L^2(0,T;\mathbf{H}_\sigma^1(\Omega)),\\[1mm]
		\widetilde{\phi}\in {L^\infty(0,T;H_D^3\cap H^5(\Omega))},\quad \partial_t\widetilde{\phi}\in {L^\infty(0,T;H_0^1(\Omega))}\cap L^2(0,T;H_D^3).
		\end{array}\label{regularity-1}
	\end{align}	
	Then, it holds
	\begin{align}
		&\widetilde{\boldsymbol{g}}\in C([0,T];\mathbf{H}^r(\Omega))\cap L^\infty(0,T;\mathbf{H}^1(\Omega)),\quad\partial_t\widetilde{\boldsymbol{g}}\in L^2(0,T;(\mathbf{H}_\sigma^1(\Omega))'),\label{g-regularity}\\
		&\widetilde{f}\in C([0,T];H^2(\Omega))\cap L^2(0,T;H^3(\Omega)),\quad\partial_t\widetilde{f}\in L^2(0,T;H^1(\Omega)),\label{f-regularity}
	\end{align}
	for all $r\in[0,1)$.
\end{lem}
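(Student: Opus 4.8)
The plan is to verify each regularity claim for $\widetilde{\boldsymbol{g}}$ and $\widetilde{f}$ by decomposing them term-by-term and repeatedly applying the product estimate \eqref{product}, Sobolev embeddings in three dimensions (in particular $H^2(\Omega)\hookrightarrow L^\infty(\Omega)$ and $H^2(\Omega)$ being a Banach algebra), and the regularity hypotheses \eqref{regularity-1} on $(\widetilde{\boldsymbol{v}},\widetilde{\phi})$. First I would record the algebraic consequences of the hypotheses: from $\widetilde{\phi}\in L^\infty(0,T;H^5(\Omega))$ and $\psi\in C^5(\overline\Omega)$, together with $\widehat\Psi\in C^6(\mathbb R)$ and $\widehat\rho,\widehat\nu\in C^2$ with bounded derivatives, the composite functions $\widehat\rho(\widetilde\phi+\psi)$, $\widehat\nu(\widetilde\phi+\psi)$, $\widehat\Psi'(\widetilde\phi+\psi)-\widehat\Psi'(\psi)$ and $\mu(\widetilde\phi+\psi)=\widehat\Psi'(\widetilde\phi+\psi)-\Delta(\widetilde\phi+\psi)$ inherit enough spatial regularity (say $H^2$ or $H^3$ uniformly in $t$), using the chain rule and the fact that $H^2(\Omega)$ is an algebra; the low regularity of $\widehat\rho,\widehat\nu$ (only $C^2$) is why one cannot go beyond $H^1$-type control for the $\widehat\rho$-dependent term in $\widetilde{\boldsymbol{g}}$, which is consistent with the claimed $\mathbf H^1$ (not $\mathbf H^2$) bound.

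For the \textbf{spatial regularity} \eqref{g-regularity}: the convective term $\widehat\rho(\widetilde\phi+\psi)(\widetilde{\boldsymbol v}\cdot\nabla)\widetilde{\boldsymbol v}$ is estimated by writing $(\widetilde{\boldsymbol v}\cdot\nabla)\widetilde{\boldsymbol v}\in \mathbf H^1$ (since $\widetilde{\boldsymbol v}\in \mathbf H^2$ and $\nabla\widetilde{\boldsymbol v}\in\mathbf H^1$, using that $\mathbf H^2\hookrightarrow L^\infty$ and \eqref{product}) and multiplying by the $H^2$ function $\widehat\rho(\widetilde\phi+\psi)$; the term $\varrho_1(\nabla\mu(\widetilde\phi+\psi)\cdot\nabla)\widetilde{\boldsymbol v}$ needs $\nabla\mu(\widetilde\phi+\psi)\in \mathbf H^2$ (coming from $\mu\in H^3$, hence from $\widetilde\phi\in H^5$) times $\nabla\widetilde{\boldsymbol v}\in\mathbf H^1$; the capillary terms $\Delta\widetilde\phi\,\nabla(\widetilde\phi+\psi)$ and $\Delta\psi\,\nabla\widetilde\phi$ lie in $\mathbf H^3$ using $\widetilde\phi\in H^5$, $\psi\in C^5$; and the gravity term $g\varrho_1\widetilde\phi\boldsymbol e_3$ lies in $H^5$. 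Hence $\widetilde{\boldsymbol g}\in L^\infty(0,T;\mathbf H^1)$. For $\widetilde f=\Delta(\widehat\Psi'(\widetilde\phi+\psi)-\widehat\Psi'(\psi))-\widetilde{\boldsymbol v}\cdot\nabla(\widetilde\phi+\psi)$: the first term is in $H^3$ because $\widehat\Psi'(\widetilde\phi+\psi)-\widehat\Psi'(\psi)\in H^5$ (using $\widehat\Psi\in C^6$, $\widetilde\phi\in H^5$), and the transport term is in $H^2$ (as $\widetilde{\boldsymbol v}\in\mathbf H^2$ algebra-multiplied by $\nabla(\widetilde\phi+\psi)\in\mathbf H^3$, with the product estimate adapted to $H^2$). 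This gives $\widetilde f\in L^2(0,T;H^3)$ and, at fixed time, $\widetilde f\in H^2$.

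For the \textbf{time continuity} and the \textbf{time-derivative} bounds: continuity in time, i.e. $\widetilde{\boldsymbol g}\in C([0,T];\mathbf H^r)$ for $r<1$ and $\widetilde f\in C([0,T];H^2)$, I would obtain by combining the uniform-in-$t$ bound in the higher space ($\mathbf H^1$, resp. $H^3$) with weak continuity (each term is, after the estimates above, a product of factors that are weakly continuous in time into suitable spaces — here one uses that $\widetilde{\boldsymbol v}\in C_w([0,T];\mathbf H^2)$, $\widetilde\phi\in C_w([0,T];H^5)$ follow from \eqref{regularity-1} — and then interpolates: weak continuity into $\mathbf H^1$ plus boundedness upgrades to strong continuity into $\mathbf H^r$, $r<1$, by compactness of the embedding $\mathbf H^1\hookrightarrow\hookrightarrow\mathbf H^r$; the $H^2$-continuity of $\widetilde f$ uses that the "bad" factor only sits in $H^3$ against continuity into a slightly lower space, again via interpolation). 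The time-derivative estimates are the key computation: differentiating each product in $t$ and distributing $\partial_t$, every term of $\partial_t\widetilde{\boldsymbol g}$ is a product of one factor carrying a time derivative (e.g. $\partial_t\widetilde{\boldsymbol v}\in L^\infty(0,T;\mathbf L^2)\cap L^2(0,T;\mathbf H^1_\sigma)$, or $\partial_t\widetilde\phi\in L^\infty(0,T;H_0^1)\cap L^2(0,T;H_D^3)$, or $\widehat\rho'(\widetilde\phi+\psi)\partial_t\widetilde\phi$) with bounded spatial factors; testing against $\boldsymbol w\in\mathbf H^1_\sigma$ and using Hölder plus Sobolev embeddings shows each term is in $L^2(0,T;(\mathbf H^1_\sigma)')$, and similarly $\partial_t\widetilde f\in L^2(0,T;H^1)$ after noting $\partial_t\Delta(\widehat\Psi'(\widetilde\phi+\psi)-\widehat\Psi'(\psi))=\Delta(\widehat\Psi''(\widetilde\phi+\psi)\partial_t\widetilde\phi)$ needs two spatial derivatives of $\widehat\Psi''(\widetilde\phi+\psi)\partial_t\widetilde\phi$, which is controlled because $\widehat\Psi''(\widetilde\phi+\psi)\in H^3$-type (so multiplication is benign) and $\partial_t\widetilde\phi\in L^2(0,T;H^3)$. \textbf{The main obstacle} is bookkeeping the $\partial_t\widetilde f$ estimate: one must be careful that applying the Laplacian \emph{after} the product rule still leaves a function in $H^1$, which forces the use of $\partial_t\widetilde\phi\in L^2(0,T;H_D^3)$ (three spatial derivatives in an $L^2$-in-time sense) rather than merely the $L^\infty(0,T;H_0^1)$ bound — the matching of the two scales of regularity (pointwise-in-time low regularity versus integrated-in-time high regularity) is the delicate point throughout, and is exactly what the hypotheses \eqref{regularity-1} are designed to supply.
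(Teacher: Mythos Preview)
Your term-by-term decomposition, the use of the product estimate \eqref{product}, the Sobolev/algebra arguments for the spatial bounds, and the dual-norm estimates for $\partial_t\widetilde{\boldsymbol g}$ and the $H^1$ estimate for $\partial_t\widetilde f$ are exactly what the paper does; on these points your proposal and the paper's proof coincide.

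The one place where you diverge from the paper is the \emph{time-continuity} step. The paper does not go through weak continuity of the individual factors plus interpolation; instead, having already shown $\widetilde{\boldsymbol g}\in L^\infty(0,T;\mathbf H^1)$ with $\partial_t\widetilde{\boldsymbol g}\in L^2(0,T;(\mathbf H^1_\sigma)')$, and $\widetilde f\in L^2(0,T;H^3)$ with $\partial_t\widetilde f\in L^2(0,T;H^1)$, it simply invokes the Aubin--Lions--Simon lemma to conclude $\widetilde{\boldsymbol g}\in C([0,T];\mathbf H^r)$ for $r<1$ and $\widetilde f\in C([0,T];H^2)$. Your weak-continuity route is fine for $\widetilde{\boldsymbol g}$ (uniform $\mathbf H^1$ bound plus compact embedding into $\mathbf H^r$), but it does not quite close for $\widetilde f\in C([0,T];H^2)$: you yourself note that at fixed time $\widetilde f$ is only in $H^2$ (the transport term $\widetilde{\boldsymbol v}\cdot\nabla(\widetilde\phi+\psi)$ is limited by $\widetilde{\boldsymbol v}\in L^\infty_t\mathbf H^2$), so there is no uniform-in-$t$ bound in a space compactly embedded into $H^2$ to interpolate against. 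Since you already derive $\partial_t\widetilde f\in L^2(0,T;H^1)$ and $\widetilde f\in L^2(0,T;H^3)$, the clean fix is to drop the weak-continuity argument and use Aubin--Lions--Simon, as the paper does.
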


\begin{proof}
We first show the regularity \eqref{g-regularity}. By the definition of $\widetilde{\boldsymbol{ g}}$, using the product estimate \eqref{product}, we obtain
\begin{align}
	\|\widetilde{\boldsymbol{ g}}\|_{\mathbf{H}^1}&\leq \|\widehat{\rho}(\widetilde{\phi}+\psi) ( \widetilde{\boldsymbol{v}}\cdot\nabla)\widetilde{\boldsymbol{v}}\|_{\mathbf{H}^1}
	+\varrho_1\|(  \nabla \mu(\widetilde{\phi}+\psi)\cdot \nabla) \widetilde{\boldsymbol{v}}\|_{\mathbf{H}^1}\notag\\
	&\quad+\|
	\Delta \widetilde{\phi}\nabla(\widetilde{\phi}+ \psi)\|_{\mathbf{H}^1}+\| \Delta\psi \nabla
	\widetilde{\phi} \|_{\mathbf{H}^1}+g \varrho_1 \|\widetilde{ \phi}\|_{H^1}\notag\\
	&\leq C\|\widehat{\rho}(\widetilde{\phi}+\psi) \|_{H^2}\|( \widetilde{\boldsymbol{v}}\cdot\nabla)\widetilde{\boldsymbol{v}}\|_{\mathbf{H}^1}
	+C\|  \nabla \mu(\widetilde{\phi}+\psi)\|_{\mathbf{H}^2}\| \nabla \widetilde{\boldsymbol{v}}\|_{\mathbf{H}^1}\notag\\
	&\quad+C\|
	\Delta \widetilde{\phi}\|_{H^1}\|\nabla(\widetilde{\phi}+ \psi)\|_{\mathbf{H}^2}+C\| \Delta\psi\|_{H^1} \|\nabla
	\widetilde{\phi} \|_{\mathbf{H}^2}+g \varrho_1 \|\widetilde{ \phi}\|_{H^1}\notag\\
	&\leq  C\|\widehat{\rho}(\widetilde{\phi}+\psi) \|_{H^2}\| \widetilde{\boldsymbol{v}}\|_{\mathbf{H}^2}\|\nabla\widetilde{\boldsymbol{v}}\|_{\mathbf{H}^1}
	+C\|  \nabla \Delta(\widetilde{\phi}+\psi)\|_{\mathbf{H}^2}\| \widetilde{\boldsymbol{v}}\|_{\mathbf{H}^2}\notag\\
	&\quad+C\|\widehat{\Psi}''(\widetilde{\phi}+\psi)\nabla(\widetilde{\phi}+\psi)\|_{\mathbf{H}^2}\| \widetilde{\boldsymbol{v}}\|_{\mathbf{H}^2}+C\|
	\Delta \widetilde{\phi}\|_{H^1}\|\nabla(\widetilde{\phi}
	+ \psi)\|_{\mathbf{H}^2}\notag\\
	&\quad+C\| \Delta\psi\|_{H^1} \|\nabla
	\widetilde{\phi} \|_{\mathbf{H}^2}+g \varrho_1 \|\widetilde{ \phi}\|_{H^1}\notag\\
	&\leq C(1+\|\widetilde{ \phi}\|_{H^2}^2)\|\widetilde{ \boldsymbol{v} }\|_{\mathbf{H}^2}^2+C(1+\|\widetilde{ \phi}\|_{H^5}^5)(1+\|\widetilde{ \boldsymbol{v} }\|_{\mathbf{H}^2}),\label{g-L-infty}
\end{align}
which, together with \eqref{regularity-1}, infers that $\widetilde{\boldsymbol{g}}\in L^\infty(0,T;\mathbf{H}^1(\Omega))$. 
While for the regularity of $\partial_t\widetilde{\boldsymbol{g}}$, by direct calculation, it holds
\begin{align*}
	\partial_t\widetilde{\boldsymbol{g}}&=-\Delta\partial_t\widetilde{\phi}\nabla(\widetilde{\phi}+\psi)-\Delta\widetilde{\phi}\nabla\partial_t\widetilde{\phi}-\Delta\psi\nabla\partial_t\widetilde{\phi}-g\varrho_1\partial_t\widetilde{\phi}\boldsymbol{e}_3\\
	&\quad-\partial_t[\widehat{\rho}(\widetilde{\phi}+\psi)(\widetilde{\boldsymbol{v}}\cdot\nabla)\widetilde{\boldsymbol{ v}}-\varrho_1(\nabla\mu(\widetilde{\phi}+\psi)\cdot\nabla)\widetilde{\boldsymbol{v}}].
\end{align*}
Then, we see that 
\begin{align*}
	\|\partial_t\widetilde{\boldsymbol{g}}\|_{\sharp}&\leq \|\Delta\partial_t\widetilde{\phi}\nabla(\widetilde{\phi}+\psi)\|_\sharp+\|\Delta\widetilde{\phi}\nabla\partial_t\widetilde{\phi}\|_\sharp+\|\Delta\psi\nabla\partial_t\widetilde{\phi}\|_\sharp\\
	&\quad +g\varrho_1\|\partial_t\widetilde{\phi}\|_\sharp+\|\partial_t[(\widehat{\rho}(\widetilde{\phi}+\psi)\widetilde{\boldsymbol{v}}-\varrho_1\nabla\mu(\widetilde{\phi}+\psi))\cdot\nabla\widetilde{\boldsymbol{v}})]\|_\sharp:=\sum_{k=1}^{5}I_k.
\end{align*}
By the definition of the dual norm, it holds
\begin{align*}
	I_{1}&\leq \sup_{
		\|\boldsymbol{w}\|_{\mathbf{H}_\sigma^1(\Omega)}=1
	}\int_\Omega \Delta\partial_t\widetilde{\phi}\nabla(\widetilde{\phi}+\psi)\cdot\boldsymbol{w}\,\mathrm{d}x\\
	&=\sup_{\|\boldsymbol{w}\|_{\mathbf{H}_\sigma^1(\Omega)}=1}-\int_\Omega\nabla\partial_t\widetilde{\phi}\cdot\nabla(\nabla(\widetilde{\phi}+\psi)\cdot\boldsymbol{w})\,\mathrm{d}x\\
	&\leq C \|\nabla\partial_t\widetilde{\phi}\|_{\mathbf{L}^2}\|\widetilde{\phi}+\psi\|_{H^5}\\
	&\leq C\|\partial_t\widetilde{ \phi}\|_{H^1}(1+\|\widetilde{\phi}\|_{H^5}),\\
	\sum_{k=2}^{4}I_k&\leq \|\Delta\widetilde{\phi}\nabla\partial_t\widetilde{\phi}\|_{\mathbf{L}^2}+\|\Delta\psi\nabla\partial_t\widetilde{\phi}\|_{\mathbf{L}^2}+g\varrho_1\|\partial_t\widetilde{\phi}\|_{L^2}\\
	&\leq C\big(\|\widetilde{\phi}\|_{H^5}\|\partial_t\widetilde{\phi}\|_{H^1}+\|\psi\|_{H^5}\|\partial_t\widetilde{\phi}\|_{H^1}+g\varrho_1\|\partial_t\widetilde{\phi}\|_{L^2}\big),
\end{align*}
and 
\begin{align*}
	I_{5}&\leq \|\partial_t\widehat{\rho}(\widetilde{\phi}+\psi)(\widetilde{\boldsymbol{v}}\cdot\nabla)\widetilde{\boldsymbol{v}}\|_\sharp+\|\widehat{\rho}(\widetilde{\phi}+\psi)( \partial_t \widetilde{\boldsymbol{v}}\cdot\nabla)\widetilde{\boldsymbol{v}}\|_\sharp+\|\widehat{\rho} (\widetilde{\phi}+\psi) (\widetilde{\boldsymbol{v}}\cdot\nabla)\partial_t\widetilde{\boldsymbol{v}}\|_\sharp\\
	&\quad+\varrho_1\|(\nabla\Delta(\widetilde{\phi}+\psi)\cdot\nabla)\partial_t\widetilde{\boldsymbol{v}}\|_\sharp+\varrho_1\|(\nabla\Delta\partial_t\widetilde{\phi}\cdot\nabla)\widetilde{\boldsymbol{v}}\|_\sharp+m\varrho_1\|\widehat{\Psi}''(\widetilde{\phi}+\psi)(\nabla(\widetilde{\phi}+\psi)\cdot\nabla)\partial_t\widetilde{\boldsymbol{v}}\|_\sharp\\
	&\quad+\varrho_1\|\widehat{\Psi}'''(\widetilde{\phi}+\psi)\partial_t\widetilde{\phi}(\nabla(\widetilde{\phi}+\psi)\cdot\nabla)\widetilde{\boldsymbol{v}}\|_\sharp+\varrho_1\|\widehat{\Psi}''(\widetilde{\phi}+\psi)(\nabla\partial_t\widetilde{\phi}\cdot\nabla)\widetilde{\boldsymbol{v}}\|_\sharp\\
	&:=\sum_{k=1}^{8} I_{5}^k.
\end{align*}
Now, we estimate the terms $I_{5}^k$ for $1\leq k\leq 8$:
\begin{align*}
I_{5}^1&\leq\|\partial_t\widehat{\rho}(\widetilde{\phi}+\psi)(\widetilde{\boldsymbol{v}}\cdot\nabla)\widetilde{\boldsymbol{v}}\|_{\mathbf{L}^2}\leq \widetilde{C}_1 \|\partial_t\widetilde{\phi}\|_{L^4}\|\widetilde{\boldsymbol{v}}\|_{\mathbf{L}^\infty}\|\nabla\widetilde{\boldsymbol{v}}\|_{\mathbf{L}^4}\leq C \|\partial_t\widetilde{\phi}\|_{H^1}\|\widetilde{\boldsymbol{v}}\|_{\mathbf{H}^2}^2, \\
	I_{5}^2&=\sup_{\|\boldsymbol{ w}\|_{\mathbf{H}_\sigma^1(\Omega)}=1}\int_\Omega \widehat{\rho}(\widetilde{\phi}+\psi)(\partial_t\widetilde{\boldsymbol{ v}}\cdot\nabla)\widetilde{ \boldsymbol{v} }\cdot\boldsymbol{ w}\,\mathrm{d}x\\
	&\leq \sup_{\|\boldsymbol{ w}\|_{\mathbf{H}_\sigma^1(\Omega)}=1}\|\widehat{\rho}(\widetilde{ \phi}+\psi)\|_{L^\infty}\|\partial_t\widetilde{\boldsymbol{v}}\|_{\mathbf{L}^2}\|\nabla\widetilde{\boldsymbol{v}}\|_{\mathbf{L}^4}\|\boldsymbol{ w}\|_{\mathbf{L}^4}\leq C\|\partial_t\widetilde{\boldsymbol{v}}\|_{\mathbf{L}^2}\|\widetilde{\boldsymbol{v}}\|_{\mathbf{H}^2},\\
	I_{5}^3&=\sup_{\|\boldsymbol{ w}\|_{\mathbf{H}_\sigma^1(\Omega)}=1}\int_\Omega \widehat{\rho}(\widetilde{\phi}+\psi)(\widetilde{\boldsymbol{ v}}\cdot\nabla)\partial_t\widetilde{ \boldsymbol{v} }\cdot\boldsymbol{ w}\,\mathrm{d}x\\
	&=\sup_{\|\boldsymbol{ w}\|_{\mathbf{H}_\sigma^1(\Omega)}=1}\left(-\int_\Omega \widehat{\rho}(\widetilde{\phi}+\psi)(\widetilde{\boldsymbol{ v}}\cdot\nabla)\boldsymbol{ w}\cdot\partial_t\widetilde{ \boldsymbol{v} }\,\mathrm{d}x-\int_\Omega (\nabla\widehat{\rho}(\widetilde{ \phi}+\psi)\cdot\widetilde{\boldsymbol{v}})\boldsymbol{ w}\cdot\partial_t\widetilde{ \boldsymbol{v} }\,\mathrm{d}x\right)\\
	&\leq \sup_{\|\boldsymbol{ w}\|_{\mathbf{H}_\sigma^1(\Omega)}=1}\Big(\|\widehat{\rho}(\widetilde{\phi}+\psi)\|_{L^\infty}\|\widetilde{\boldsymbol{ v}}\|_{\mathbf{L}^\infty}\|\partial_t\widetilde{\boldsymbol{v}}\|_{\mathbf{L}^2}\|\nabla\boldsymbol{ w}\|_{\mathbf{L}^2}+\widetilde{C}_1\|\nabla(\widetilde{ \phi}+\psi)\|_{\mathbf{L}^\infty}\|\widetilde{\boldsymbol{ v}}\|_{\mathbf{L}^\infty}\|\partial_t\widetilde{\boldsymbol{v}}\|_{\mathbf{L}^2}\|\boldsymbol{ w}\|_{\mathbf{L}^2}\Big)\\
	&\leq C \|\partial_t\widetilde{\boldsymbol{v}}\|_{\mathbf{L}^2}(\|\widetilde{\phi}\|_{H^3}+1)\|\widetilde{\boldsymbol{v}}\|_{\mathbf{H}^2},\\
	I_{5}^4&=\sup_{\|\boldsymbol{ w}\|_{\mathbf{H}_\sigma^1(\Omega)}=1}\varrho_1\int_\Omega (\nabla\Delta(\widetilde{ \phi}+\psi)\cdot\nabla)\partial_t\widetilde{ \boldsymbol{v} }\cdot\boldsymbol{ w}\,\mathrm{d}x \\
	&=\sup_{\|\boldsymbol{ w}\|_{\mathbf{H}_\sigma^1(\Omega)}=1} \left(-\varrho_1\int_\Omega \Delta^2(\widetilde{\phi}+\psi)\partial_t\widetilde{ \boldsymbol{v} }\cdot\boldsymbol{ w}\,\mathrm{d}x-\varrho_1\int_\Omega(\nabla\Delta(\widetilde{ \phi}+\psi)\cdot\nabla)\boldsymbol{ w}\cdot\partial_t\widetilde{ \boldsymbol{v} }\,\mathrm{d}x\right)\\
	&\leq \sup_{\|\boldsymbol{ w}\|_{\mathbf{H}_\sigma^1(\Omega)}=1}\Big(\varrho_1\|\Delta^2(\widetilde{ \phi}+\psi)\|_{L^4}\|\partial_t\widetilde{ \boldsymbol{v} }\|_{\mathbf{L}^2}\|\boldsymbol{ w}\|_{\mathbf{L}^4}+\varrho_1\|\nabla\Delta(\widetilde{\phi}+\psi)\|_{\mathbf{L}^\infty}\|\partial_t\widetilde{ \boldsymbol{v} }\|_{\mathbf{L}^2}\|\nabla\boldsymbol{ w}\|_{\mathbf{L}^2}\Big)\\
	&\leq C \|\partial_t\widetilde{\boldsymbol{v}}\|_{\mathbf{L}^2}\|\widetilde{\phi}+\psi\|_{H^5}+ C\|\partial_t\widetilde{\boldsymbol{v}}\|_{\mathbf{L}^2}\|\nabla\Delta(\widetilde{\phi}+\psi)\|_{\mathbf{L}^\infty}\\
	&\leq C\|\partial_t\widetilde{\boldsymbol{v}}\|_{\mathbf{L}^2}(\|\widetilde{ \phi}\|_{H^5}+1),\\
	I_{5}^5&= \sup_{\|\boldsymbol{ w}\|_{\mathbf{H}_\sigma^1(\Omega)}=1}\varrho_1\int_\Omega(\nabla\Delta\partial_t\widetilde{ \phi}\cdot\nabla)\widetilde{\boldsymbol{ v}}\cdot\boldsymbol{ w}\,\mathrm{d}x\\
	&= \sup_{\|\boldsymbol{ w}\|_{\mathbf{H}_\sigma^1(\Omega)}=1}\left(-\varrho_1\int_\Omega \Delta\partial_t \widetilde{ \phi}\Delta\widetilde{\boldsymbol{v}}\cdot\boldsymbol{ w}\,\mathrm{d}x-\varrho_1\int_\Omega \Delta\partial_t\widetilde{ \phi}\nabla\widetilde{ \boldsymbol{v} }:\nabla\boldsymbol{ w}\,\mathrm{d}x\right)\\
	&\leq  \sup_{\|\boldsymbol{ w}\|_{\mathbf{H}_\sigma^1(\Omega)}=1} \Big(\varrho_1\|\Delta\partial_t\widetilde{\phi}\|_{L^3}\|\Delta\widetilde{\boldsymbol{v}}\|_{\mathbf{L}^2}\|\boldsymbol{ w}\|_{\mathbf{L}^6}+\varrho_1\|\Delta\partial_t\widetilde{ \phi}\|_{L^3}\|\nabla\widetilde{ \boldsymbol{v} }\|_{\mathbf{L}^6}\|\nabla\boldsymbol{ w}\|_{\mathbf{L}^2}\Big)\\
	&\leq C \|\partial_t\widetilde{\phi}\|_{H^1}^{\frac{1}{4}}\|\partial_t\widetilde{\phi}\|_{H^3}^{\frac{3}{4}}\|\widetilde{\boldsymbol{v}}\|_{\mathbf{H}^2},\\
	I_{5}^6&= \sup_{\|\boldsymbol{ w}\|_{\mathbf{H}_\sigma^1(\Omega)}=1}\varrho_1\int_\Omega \widehat{\Psi}''(\widetilde{ \phi}+\psi)(\nabla(\widetilde{ \phi}+\psi)\cdot\nabla)\partial_t\widetilde{ \boldsymbol{v} }\cdot\boldsymbol{ w}\,\mathrm{d}x\\
	&=\sup_{\|\boldsymbol{ w}\|_{\mathbf{H}_\sigma^1(\Omega)}=1}\left(\ -\varrho_1\int_\Omega \widehat{\Psi}'''(\widetilde{ \phi}+\psi)|\nabla(\widetilde{ \phi}+\psi)|^2\partial_t\widetilde{ \boldsymbol{v} }\cdot\boldsymbol{ w}\,\mathrm{d}x\right.\\
	&\qquad\qquad\qquad\quad-\varrho_1\int_\Omega \widehat{\Psi}''(\widetilde{\phi}+\psi)\Delta(\widetilde{ \phi}+\psi)\partial_t\widetilde{ \boldsymbol{v} }\cdot\boldsymbol{ w}\,\mathrm{d}x\\
	&\qquad\qquad\qquad\quad\left.-\varrho_1\int_\Omega \widehat{\Psi}''(\widetilde{ \phi}+\psi)(\nabla(\widetilde{ \phi}+\psi)\cdot\nabla)\boldsymbol{ w}\cdot\partial_t\widetilde{ \boldsymbol{v} }\,\mathrm{d}x\right)\\
	&\leq \sup_{\|\boldsymbol{ w}\|_{\mathbf{H}_\sigma^1(\Omega)}=1}\Big(\varrho_1\|\widehat{\Psi}'''(\widetilde{ \phi}+\psi)\|_{L^\infty}\|\nabla(\widetilde{ \phi}+\psi)\|_{\mathbf{L}^\infty}^2\|\partial_t\widetilde{ \boldsymbol{v} }\|_{\mathbf{L}^2}\|\boldsymbol{ w}\|_{\mathbf{L}^2}\\
	&\qquad\qquad\qquad\quad+\varrho_1\|\widehat{\Psi}''(\widetilde{\phi}+\psi)\|_{L^\infty}\|\Delta(\widetilde{ \phi}+\psi)\|_{L^\infty}\|\partial_t\widetilde{ \boldsymbol{v} }\|_{\mathbf{L}^2}\|\boldsymbol{ w}\|_{\mathbf{L}^2}\\
	&\qquad\qquad\qquad\quad+\varrho_1\|\widehat{\Psi}''(\widetilde{\phi}+\psi)\|_{L^\infty}\|\nabla(\widetilde{\phi}+\psi)\|_{\mathbf{L}^\infty}\|\partial_t\widetilde{ \boldsymbol{v} }\|_{\mathbf{L}^2}\|\nabla\boldsymbol{ w}\|_{\mathbf{L}^2}\Big)\\
	&\leq C(\|\widetilde{\phi}\|_{H^5}^5+1)\|\partial_t\widetilde{ \boldsymbol{v} }\|_{\mathbf{L}^2},\\
	I_{5}^7&\leq \varrho_1\|\widehat{\Psi}'''(\widetilde{\phi}+\psi)\partial_t\widetilde{\phi}(\nabla(\widetilde{\phi}+\psi)\cdot\nabla)\widetilde{\boldsymbol{v}}\|_{\mathbf{L}^2}\\
	&\leq C(\|\widetilde{\phi}\|_{H^5}^4+1)\|\partial_t\widetilde{\phi}\|_{L^6}\|\nabla\widetilde{\boldsymbol{v}}\|_{\mathbf{L}^3}\leq C(\|\widetilde{\phi}\|_{H^5}^4+1)\|\partial_t\widetilde{\phi}\|_{H^1}\|\widetilde{\boldsymbol{v}}\|_{\mathbf{H}^2},\\
	I_{5}^8&=\sup_{\|\boldsymbol{ w}\|_{\mathbf{H}_\sigma^1(\Omega)}=1}\varrho_1\int_\Omega \widehat{\Psi}''(\widetilde{\phi}+\psi)(\nabla\partial_t\widetilde{ \phi}\cdot\nabla)\widetilde{ \boldsymbol{v} }\cdot\boldsymbol{ w}\,\mathrm{d}x\\
	&\leq\sup_{\|\boldsymbol{ w}\|_{\mathbf{H}_\sigma^1(\Omega)}=1} \varrho_1\|\widehat{\Psi}''(\widetilde{ \phi}+\psi)\|_{L^\infty}\|\nabla\partial_t\widetilde{ \phi}\|_{\mathbf{L}^2}\|\nabla\widetilde{ \boldsymbol{v} }\|_{\mathbf{L}^3}\|\boldsymbol{ w}\|_{\mathbf{L}^6}\\
	&\leq C(\|\widetilde{\phi}\|_{H^5}^4+1)\|\nabla\partial_t\widetilde{\phi}\|_{\mathbf{L}^2}\|\nabla\widetilde{\boldsymbol{v}}\|_{\mathbf{L}^3}\\
	&\leq C(\|\widetilde{\phi}\|_{H^5}^4+1)\|\partial_t\widetilde{\phi}\|_{H^1}\|\widetilde{\boldsymbol{v}}\|_{\mathbf{H}^2}.
\end{align*}
Then, we obtain
\begin{align}
	\|\partial_t\widetilde{\boldsymbol{g}}\|_\sharp&\leq C\|\partial_t\widetilde{ \phi}\|_{H^1}\big(1+\|\widetilde{ \phi}\|_{H^5}+\|\widetilde{ \boldsymbol{v}}\|_{\mathbf{H}^2}^2+(\|\widetilde{ \phi}\|_{H^5}^4+1)\|\widetilde{ \boldsymbol{v}}\|_{\mathbf{H}^2}\big)\notag\\
	&\quad+C\|\partial_t\widetilde{ \boldsymbol{v}}\|_{\mathbf{L}^2}\big(1+\|\widetilde{ \phi}\|_{H^5}^5+\|\widetilde{\boldsymbol{v}}\|_{\mathbf{H}^2}+(\|\widetilde{ \phi}\|_{H^5}+1)\|\widetilde{ \boldsymbol{v}}\|_{\mathbf{H}^2}\big)\notag\\
	&\quad+ C \|\partial_t\widetilde{\phi}\|_{H^1}^{\frac{1}{4}}\|\partial_t\widetilde{\phi}\|_{H^3}^{\frac{3}{4}}\|\widetilde{\boldsymbol{v}}\|_{\mathbf{H}^2},\label{p_t-g}
\end{align}
from which, we can conclude that $\partial_t\widetilde{\boldsymbol{g}}\in L^2(0,T;(\mathbf{H}_\sigma^1(\Omega))')$. Then, the Aubin--Lions--Simon lemma infers $\widetilde{\boldsymbol{g}}\in C([0,T];\mathbf{H}^r(\Omega))$ for all $r\in[0,1)$.

Now, we verify the regularity \eqref{f-regularity}. By the definition of $\widetilde{f}$, we have
\begin{align}
	\|\widetilde{f}\|_{H^3}&\leq \|	\Delta(\widehat{\Psi}'(\widetilde{\phi}+\psi)-\widehat{\Psi}'(\psi))\|_{H^3}+\| \widetilde{ \boldsymbol{v} }\cdot\nabla (\widetilde{\phi}+\psi)\|_{H^3}\notag\\
	&\leq C\|\widehat{\Psi}'(\widetilde{\phi}+\psi)-\widehat{\Psi}'(\psi)\|_{H^5}+C\| \widetilde{ \boldsymbol{v} }\|_{\mathbf{H}^3}\|\nabla (\widetilde{\phi}+\psi)\|_{\mathbf{H}^3}\notag\\
	&\leq C(1+\|\widetilde{ \phi}\|_{H^5}^5+\| \widetilde{ \boldsymbol{v} }\|_{\mathbf{H}^3}(\|\widetilde{\phi}\|_{H^4}+1)),\notag
\end{align}
which indicates $\widetilde{f}\in L^2(0,T;H^3(\Omega))$. Concerning the regularity of $\partial_t\widetilde{f}$, by the definition of $\widetilde{f}$, we see that
\begin{align*}
	\partial_t \widetilde{f}&=\Delta( \widehat{\Psi}''(\widetilde{\phi}+\psi)\partial_t\widetilde{\phi})-\partial_t\widetilde{\boldsymbol{v}}\cdot\nabla(\widetilde{\phi}+\psi)-\widetilde{\boldsymbol{v}}\cdot\nabla\partial_t\widetilde{\phi}.
\end{align*}
Then, using the product estimate \eqref{product}, we have
\begin{align}
	\|\partial_t\widetilde{f}\|_{H^1}&\leq \|\Delta(\widehat{\Psi}''(\widetilde{ \phi}+\psi)\partial_t\widetilde{ \phi})\|_{H^1}+\|\partial_t\widetilde{\boldsymbol{v}}\cdot\nabla(\widetilde{\phi}+\psi)\|_{H^1}+\|\widetilde{\boldsymbol{v}}\cdot\nabla\partial_t\widetilde{\phi}\|_{H^1}\notag\\
	&\leq \|\widehat{\Psi}''(\widetilde{ \phi}+\psi)\partial_t\widetilde{ \phi}\|_{H^3}+C\|\partial_t\widetilde{\boldsymbol{v}}\|_{\mathbf{H}^1}\|\nabla(\widetilde{\phi}+\psi)\|_{\mathbf{H}^2}+C\|\widetilde{\boldsymbol{v}}\|_{\mathbf{H}^1}\|\nabla\partial_t\widetilde{\phi}\|_{\mathbf{H}^2}\notag\\
	&\leq C\|\widehat{\Psi}''(\widetilde{ \phi}+\psi)\|_{H^3}\|\partial_t\widetilde{ \phi}\|_{H^3}+C\|\partial_t\widetilde{\boldsymbol{v}}\|_{\mathbf{H}^1}\|\widetilde{\phi}+\psi\|_{H^3}+C\|\widetilde{\boldsymbol{v}}\|_{\mathbf{H}^1}\|\partial_t\widetilde{\phi}\|_{H^3}\notag\\
	&\leq C\|\partial_t\widetilde{ \phi}\|_{H^3}(1+\|\widetilde{ \phi}\|_{H^3}^4)+C\|\partial_t\widetilde{\boldsymbol{v}}\|_{\mathbf{H}^1}(1+\|\widetilde{ \phi}\|_{H^3})+C\|\widetilde{\boldsymbol{v}}\|_{\mathbf{H}^1}\|\partial_t\widetilde{\phi}\|_{H^3},\label{p_t-f}
\end{align}
then, we see that $\partial_t\widetilde{f}\in L^2(0,T;H^1(\Omega))$ and the Aubin--Lions--Simon lemma infers $f\in C([0,T];H^2(\Omega))$. Hence, we complete the proof of Lemma \ref{regularity}.
\end{proof}

\begin{rem}
	\label{regularity-2}
	It is easy to see, when the regularity of $\widetilde{\phi}$ is replaced by
	\[\widetilde{\phi}\in {L^\infty(0,T;H_N^5)},\quad \partial_t\widetilde{\phi}\in{L^\infty(0,T;H_{(0)}^1(\Omega))}\cap L^2(0,T;H_N^3),\]
	we can also verify the regularity \eqref{g-regularity} and \eqref{f-regularity}.
\end{rem}

\subsection{Unique solvability of linearized problem \eqref{linearized}}
\label{section3.1}

To show the solvability of the linearized problem \eqref{linearized}, it suffices to study the following two linearized subsystems
\begin{align}
	\begin{cases}
		\widehat{\rho}(\widetilde{\phi}+{\psi})  \partial_t\boldsymbol{v} -  {\text{div}(2\widehat{\nu}(\widetilde{\phi}+\psi)\mathbb{D}\boldsymbol{v})}
		+\nabla q
		=\widetilde{\boldsymbol{g}},&\text{in }\Omega\times(0,T),\\
		\boldsymbol{v}=\boldsymbol{0},&\text{on }\Gamma\times(0,T),\\
		\boldsymbol{v}|_{t=0}=\boldsymbol{v}_0,&\text{in }\Omega,
	\end{cases}\label{linear-1}
\end{align}	
and 
\begin{align}
	\begin{cases}
	\partial_t \phi+\Delta^2 \phi=\widetilde{f},&\text{in }\Omega\times(0,T),\\
	\phi=\Delta\phi=0, &\text{on }\Gamma\times(0,T),\\
	\phi|_{t=0}=\phi_0,&\text{in }\Omega.\\
	\end{cases}\label{linear-2}
\end{align}
Now, we prove the existence and uniqueness of strong solutions to the first linearized problem \eqref{linear-1}.
\begin{lem}
	\label{solve-linear1}
Let $\boldsymbol{v}_0\in \mathbf{H}_\sigma^2(\Omega)$ and $(\widetilde{\boldsymbol{v}},\widetilde{\phi})$ enjoy the regularity \eqref{regularity-1}.
Then, there exists a unique strong solution $(\boldsymbol{v},q)$ to problem \eqref{linear-1} on $[0,T]$ satisfying the regularity described in Proposition \ref{solve-approximating}.
\end{lem}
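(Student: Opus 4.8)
The plan is to construct $(\boldsymbol{v},q)$ by a Galerkin scheme in the eigenbasis $\{\boldsymbol{w}_j\}_{j\geq1}\subset\mathbf{H}_\sigma^2(\Omega)$ of the Stokes operator $\mathbf{A}=\mathbb{P}(-\Delta)$, to derive uniform estimates by the energy method — including a higher-order estimate obtained after differentiating the equation in time — and then to upgrade the spatial regularity and recover the pressure via the stationary theory of Lemma \ref{regularity-theory}, improving the weak solution as in \cite[Lemma 5]{Cho-Kim}. Note that the problem is genuinely linear: by \eqref{app_rho} and \eqref{regularity-1} the coefficients satisfy $\rho_\ast\leq\widehat{\rho}(\widetilde{\phi}+\psi)\leq\rho^\ast$, $\nu_\ast\leq\widehat{\nu}(\widetilde{\phi}+\psi)\leq\nu^\ast$, lie in $L^\infty(0,T;H^5(\Omega))$, and have time derivatives $\partial_t[\widehat{\rho}(\widetilde{\phi}+\psi)]=\widehat{\rho}'(\widetilde{\phi}+\psi)\partial_t\widetilde{\phi}\in L^\infty(0,T;H^1(\Omega))$ (and likewise for $\widehat{\nu}$), while $\widetilde{\boldsymbol{g}}$ enjoys \eqref{g-regularity}.

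\emph{Construction and uniform bounds.} With $V_n=\mathrm{span}\{\boldsymbol{w}_1,\dots,\boldsymbol{w}_n\}$ one seeks $\boldsymbol{v}_n(t)=\sum_{j=1}^nc_j^n(t)\boldsymbol{w}_j$, with $\boldsymbol{v}_n(0)$ the $\mathbf{L}_\sigma^2$-projection of $\boldsymbol{v}_0$, satisfying $(\widehat{\rho}(\widetilde{\phi}+\psi)\partial_t\boldsymbol{v}_n,\boldsymbol{w}_j)_{\mathbf{L}^2}+(2\widehat{\nu}(\widetilde{\phi}+\psi)\mathbb{D}\boldsymbol{v}_n,\mathbb{D}\boldsymbol{w}_j)_{\mathbf{L}^2}=(\widetilde{\boldsymbol{g}},\boldsymbol{w}_j)_{\mathbf{L}^2}$ for $1\leq j\leq n$; as the mass matrix with entries $(\widehat{\rho}(\widetilde{\phi}+\psi)\boldsymbol{w}_i,\boldsymbol{w}_j)_{\mathbf{L}^2}$ is symmetric, positive definite (since $\widehat{\rho}\geq\rho_\ast>0$) and continuous in $t$, this linear ODE system is uniquely solvable on $[0,T]$. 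Testing with $\boldsymbol{v}_n$ and using Korn's inequality \eqref{Korn}, Poincar\'e's inequality \eqref{Poincare} and Gronwall's lemma gives a uniform $L^\infty(0,T;\mathbf{L}_\sigma^2(\Omega))\cap L^2(0,T;\mathbf{H}_\sigma^1(\Omega))$ bound. Differentiating the projected system in $t$ and testing with $\partial_t\boldsymbol{v}_n$, together with testing the undifferentiated system with $\mathbf{A}\boldsymbol{v}_n$ (admissible since $\mathbf{A}\boldsymbol{v}_n\in V_n$), yields through a coupled Gronwall argument uniform bounds for $\partial_t\boldsymbol{v}_n$ in $L^\infty(0,T;\mathbf{L}_\sigma^2(\Omega))\cap L^2(0,T;\mathbf{H}_\sigma^1(\Omega))$ and for $\boldsymbol{v}_n$ in $L^\infty(0,T;\mathbf{H}_\sigma^2(\Omega))$; here the terms generated by $\partial_t[\widehat{\rho}(\widetilde{\phi}+\psi)]$ and $\partial_t[\widehat{\nu}(\widetilde{\phi}+\psi)]$ are controlled via \eqref{regularity-1} and Sobolev embeddings, the forcing via $\langle\partial_t\widetilde{\boldsymbol{g}},\partial_t\boldsymbol{v}_n\rangle\leq\|\partial_t\widetilde{\boldsymbol{g}}\|_\sharp\|\partial_t\boldsymbol{v}_n\|_{\mathbf{H}_\sigma^1}$ using \eqref{g-regularity}, and the higher estimate is initialized by a uniform bound for $\|\partial_t\boldsymbol{v}_n(0)\|_{\mathbf{L}^2}$, obtained from the projected equation at $t=0$ using $\boldsymbol{v}_0\in\mathbf{H}_\sigma^2(\Omega)$ and the smoothness at $t=0$ of the coefficients (which follows from \eqref{regularity-1} by interpolation).

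\emph{Passage to the limit, pressure, and bootstrap.} These bounds and the Aubin--Lions--Simon lemma give a subsequence converging to $\boldsymbol{v}$ with $\boldsymbol{v}\in L^\infty(0,T;\mathbf{H}_\sigma^2(\Omega))$, $\partial_t\boldsymbol{v}\in L^\infty(0,T;\mathbf{L}_\sigma^2(\Omega))\cap L^2(0,T;\mathbf{H}_\sigma^1(\Omega))$, solving the weak formulation on $\mathbf{H}_\sigma^1(\Omega)$ with $\boldsymbol{v}(0)=\boldsymbol{v}_0$. Since $\boldsymbol{w}\mapsto\int_\Omega[(\widetilde{\boldsymbol{g}}-\widehat{\rho}(\widetilde{\phi}+\psi)\partial_t\boldsymbol{v})\cdot\boldsymbol{w}-2\widehat{\nu}(\widetilde{\phi}+\psi)\mathbb{D}\boldsymbol{v}:\mathbb{D}\boldsymbol{w}]\,\mathrm{d}x$ vanishes on $\mathbf{H}_\sigma^1(\Omega)$, de Rham's theorem produces a pressure $q$, normalized by $\int_\Omega q\,\mathrm{d}x=0$, such that \eqref{linear-1} holds. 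Finally, for a.e.\ $t$ one regards \eqref{linear-1} as the stationary system of Lemma \ref{regularity-theory} — using $\mathrm{div}\,\boldsymbol{v}=0$ to rewrite the viscous term as $-\widehat{\nu}(\widetilde{\phi}+\psi)\Delta\boldsymbol{v}$ plus lower-order terms, so that the principal part is uniformly elliptic ($\widehat{\nu}\geq\nu_\ast$) and of class $C^2(\overline{\Omega})$ since $\widetilde{\phi}\in H^5(\Omega)\hookrightarrow C^3(\overline{\Omega})$, with $g=0$ and right-hand side $\widetilde{\boldsymbol{g}}-\widehat{\rho}(\widetilde{\phi}+\psi)\partial_t\boldsymbol{v}+2\,\mathbb{D}\boldsymbol{v}\,\nabla[\widehat{\nu}(\widetilde{\phi}+\psi)]$. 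The case $k=0$ (right-hand side in $L^\infty(0,T;\mathbf{L}^2)$) gives $\boldsymbol{v}\in L^\infty(0,T;\mathbf{H}_\sigma^2(\Omega))$, $q\in L^\infty(0,T;H^1(\Omega))$, and the case $k=1$ (right-hand side in $L^2(0,T;\mathbf{H}^1)$, by \eqref{g-regularity}, \eqref{product} and $\partial_t\boldsymbol{v}\in L^2(0,T;\mathbf{H}^1)$) gives $\boldsymbol{v}\in L^2(0,T;\mathbf{H}^3(\Omega))$, $q\in L^2(0,T;H^2(\Omega))$. The time-continuity statements and $\partial_t(\widehat{\rho}(\widetilde{\phi}+\psi)\partial_t\boldsymbol{v})\in L^2(0,T;(\mathbf{H}_\sigma^1(\Omega))')$ then follow from these bounds, the equation, and standard interpolation (Lions--Magenes).

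\emph{Uniqueness and main obstacle.} For uniqueness, the difference $\boldsymbol{v}$ of two solutions solves \eqref{linear-1} with zero data; testing with $\boldsymbol{v}$, using $\widehat{\rho}\geq\rho_\ast$, $\widehat{\nu}\geq\nu_\ast$, Korn's inequality and absorbing the term with $\partial_t[\widehat{\rho}(\widetilde{\phi}+\psi)]$, one obtains $\frac{\mathrm{d}}{\mathrm{d}t}\int_\Omega\widehat{\rho}(\widetilde{\phi}+\psi)|\boldsymbol{v}|^2\,\mathrm{d}x\leq C\int_\Omega\widehat{\rho}(\widetilde{\phi}+\psi)|\boldsymbol{v}|^2\,\mathrm{d}x$, whence Gronwall forces $\boldsymbol{v}\equiv\boldsymbol{0}$, so $\nabla q\equiv\boldsymbol{0}$ and, with $\int_\Omega q\,\mathrm{d}x=0$, $q\equiv0$. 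I expect the main obstacle to be the higher-order energy estimate: keeping track of all variable-coefficient contributions after the time differentiation and while testing with $\partial_t\boldsymbol{v}_n$ and $\mathbf{A}\boldsymbol{v}_n$, and in particular producing the compatibility bound for $\partial_t\boldsymbol{v}_n(0)$ — exactly the point where the $\mathbf{H}_\sigma^2$-regularity of $\boldsymbol{v}_0$ is used.
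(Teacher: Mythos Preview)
Your plan is essentially the paper's: Galerkin in the Stokes eigenbasis, an energy estimate testing with $\partial_t\boldsymbol{v}_n$, a second estimate after differentiating in time and testing again with $\partial_t\boldsymbol{v}_n$ (with $\|\partial_t\boldsymbol{v}_n(0)\|_{\mathbf{L}^2}$ controlled from the projected equation at $t=0$ via $\boldsymbol{v}_0\in\mathbf{H}_\sigma^2$), passage to the limit, and then Lemma~\ref{regularity-theory} to recover the pressure and upgrade $\boldsymbol{v}$ to $L^\infty(0,T;\mathbf{H}_\sigma^2)\cap L^2(0,T;\mathbf{H}^3)$.

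One caution: the paper does \emph{not} test with $\mathbf{A}\boldsymbol{v}_n$ at the Galerkin level, and with variable viscosity this step is delicate. Writing $-\Delta\boldsymbol{v}_n=\mathbf{A}\boldsymbol{v}_n+\nabla p_n$ produces a cross term $\int_\Omega p_n\,\nabla\widehat{\nu}(\widetilde{\phi}+\psi)\cdot\mathbf{A}\boldsymbol{v}_n\,\mathrm{d}x$ that is of the same order as the coercive term $\int_\Omega\widehat{\nu}|\mathbf{A}\boldsymbol{v}_n|^2\,\mathrm{d}x$ and cannot be absorbed without a smallness assumption on $\nabla\widehat{\nu}$. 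The paper therefore stops at $\boldsymbol{v}_n\in L^\infty(0,T;\mathbf{H}_\sigma^1)$ (from the $\partial_t\boldsymbol{v}_n$ test) and obtains $L^\infty(0,T;\mathbf{H}_\sigma^2)$ only \emph{after} the limit via Lemma~\ref{regularity-theory}. Since you recover this regularity the same way in your bootstrap step, the $\mathbf{A}\boldsymbol{v}_n$ test is redundant in your argument and can simply be dropped. The remaining minor differences (you apply Lemma~\ref{regularity-theory} after isolating $-\widehat{\nu}\Delta\boldsymbol{v}$, the paper applies it directly to the $\mathrm{div}(2\widehat{\nu}\mathbb{D}\boldsymbol{v})$ form; you test the difference with $\boldsymbol{v}$ for uniqueness, the paper with $\partial_t\boldsymbol{v}^{\mathrm{d}}$) are inessential.
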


\begin{proof}
	The proof of Lemma \ref{solve-linear1} can be carried out by standard Galerkin scheme and {the regularity theory stated in Lemma \ref{regularity-theory}}. To begin with, let us consider the family of eigenfunctions $\{\boldsymbol{w}_j\}_{j\geq1}$ and eigenvalues $\{\lambda_j\}_{j\geq1}$ of the Stokes operator $\mathbf{A}$. For any $n\in \mathbb{Z}^+$, we define a finite dimension space $\mathcal{U}_n=\text{span}\{\boldsymbol{w}_1,...,\boldsymbol{w}_n\}$ and an approximating solution 
	\[\boldsymbol{v}_n(t)=\sum_{j=1}^n a_j^n(t) \boldsymbol{w}_j\]
	where $a_j^n\in H^2(0,T)$ will be determined later. The orthogonal projection on $\mathcal{U}_n$ with respect to the inner product in $\mathbf{L}_\sigma^2(\Omega)$ is denoted by $\mathbb{P}_n$.
	By \eqref{g-regularity} and the theory of ODEs, we can find the coefficients $a_j^n\in H^2(0,T)$ such that
	\begin{align}
		\int_\Omega \widehat{\rho}(\widetilde{\phi}+\psi) \partial_t\boldsymbol{v}_n\cdot\boldsymbol{w}\,\mathrm{d}x+{2\int_\Omega \widehat{\nu}(\widetilde{\phi}+\psi)\mathbb{D}\boldsymbol{v}_n:\nabla\boldsymbol{w}\,\mathrm{d}x}=\int_\Omega \widetilde{\boldsymbol{g}}\cdot\boldsymbol{w}\,\mathrm{d}x,\quad\text{for all }\boldsymbol{w}\in \mathcal{U}_n,\label{weak-1}
	\end{align}
	with initial datum $\boldsymbol{v}_n|_{t=0}=\mathbb{P}_n\boldsymbol{v}_0.$
	
	Now, we derive uniform estimates with respect to the approximating parameter $n\in \mathbb{Z}^+$. In the following part of this proof, the generic constant $C$ may differ from line to line and is independent of $n\in\mathbb{Z}^+$.
	Taking $\boldsymbol{w}=\partial_t\boldsymbol{v}_n$ in \eqref{weak-1}, there holds
	\begin{align}
		&{\frac{\mathrm{d}}{\mathrm{d}t}\left\|\sqrt{\widehat{\nu}(\widetilde{\phi}+\psi)}\mathbb{D}\boldsymbol{v}_n\right\|_{\mathbf{L}^2}^2}+\left\|\sqrt{\widehat{\rho}(\widetilde{\phi}+\psi) }\partial_t\boldsymbol{v}_n\right\|_{\mathbf{L}^2}^2\notag\\
		&\quad=\int_\Omega \widetilde{\boldsymbol{g}}\cdot\partial_t \boldsymbol{v}_n\,\mathrm{d}x+{\int_\Omega\widehat{\nu}'(\widetilde{\phi}+\psi)\partial_t\widetilde{\phi}|\mathbb{D}\boldsymbol{ v}_n|^2\,\mathrm{d}x}\notag\\
		&\quad\leq \frac{1}{2}\left\|\sqrt{\widehat{\rho}(\widetilde{\phi}+\psi) }\partial_t\boldsymbol{v}_n\right\|_{\mathbf{L}^2}^2+\frac{1}{2\rho_\ast}\|\widetilde{\boldsymbol{g}}\|_{\mathbf{L}^2}^2+{\frac{\widetilde{C}_1}{\nu_\ast}\|\partial_t\widetilde{\phi}\|_{L^\infty}\left\|\sqrt{\widehat{\nu}(\widetilde{\phi}+\psi)}\mathbb{D}\boldsymbol{v}_n\right\|_{\mathbf{L}^2}^2},\notag
	\end{align}
		which, together with Gronwall's inequality, implies that{
		\begin{align}
		&\left\|\sqrt{\widehat{\nu}(\widetilde{\phi}(t)+\psi)}\mathbb{D}\boldsymbol{v}_n(t)\right\|_{\mathbf{L}^2}^2+\int_0^t \left\|\sqrt{\widehat{\rho}(\widetilde{\phi}(s)+\psi)}\partial_t\boldsymbol{v}_n(s)\right\|_{\mathbf{L}^2}^2\,\mathrm{d}s\notag\\
		&\quad\leq C\Big(1+\int_0^T\|\partial_t\widetilde{\phi}(s)\|_{H^2}\,\mathrm{d}s\Big)e^{C\displaystyle\int_0^T\|\partial_t\widetilde{\phi}(s)\|_{H^2}\,\mathrm{d}s}\Big(\|\boldsymbol{v}_n(0)\|_{\mathbf{H}^1}^2+T\|\widetilde{\boldsymbol{g}}\|_{L^\infty(0,T;\mathbf{L}^2(\Omega))}^2\Big)\notag\\
		&\quad\leq C e^{C\displaystyle\int_0^T\|\partial_t\widetilde{\phi}(s)\|_{H^2}\,\mathrm{d}s}\Big(\|\boldsymbol{v}_n(0)\|_{\mathbf{H}^1}^2+T\|\widetilde{\boldsymbol{g}}\|_{L^\infty(0,T;\mathbf{L}^2(\Omega))}^2\Big) .\label{uniform-2'}
		\end{align}
	}Since $\widehat{\rho}\geq\rho_\ast>0$ and {$\widehat{\nu}\geq \nu_\ast>0$}, by Poincare's inequality \eqref{Poincare} and Korn inequality \eqref{Korn}, we can obtain from \eqref{uniform-2'} that
		\begin{align}
			\|\boldsymbol{v}_n\|_{L^\infty(0,T;\mathbf{H}_\sigma^1(\Omega))}+\|\partial_t\boldsymbol{v}_n\|_{L^2(0,T;\mathbf{L}_\sigma^2(\Omega))}\leq C.\label{uniform-2}
		\end{align}
		Now, taking time derivative in \eqref{weak-1}, it holds
			\begin{align}
			&\int_\Omega \widehat{\rho}(\widetilde{\phi}+\psi)\partial_{tt}\boldsymbol{v}_n\cdot\boldsymbol{w}\,\mathrm{d}x+{2\int_\Omega \widehat{\nu}(\widetilde{\phi}+\psi) \mathbb{D}\partial_t \boldsymbol{v}_n:\nabla\boldsymbol{w}\,\mathrm{d}x}\notag\\
			&\quad=\langle \partial_t\widetilde{\boldsymbol{g}},\boldsymbol{w}\rangle_{(\mathbf{H}_\sigma^1(\Omega))',\mathbf{H}_\sigma^1(\Omega)}-\int_\Omega \widehat{\rho}'(\widetilde{\phi}+\psi)\partial_t\widetilde{\phi}\,\partial_t\boldsymbol{v}_n\cdot\boldsymbol{w}\,\mathrm{d}x\notag\\
			&\qquad{-2\int_\Omega \widehat{\nu}'(\widetilde{\phi}+\psi)\partial_t\widetilde{\phi}\,\mathbb{D}\boldsymbol{v}_n:\nabla\boldsymbol{w}\,\mathrm{d}x,}\quad\text{for all }\boldsymbol{w}\in \mathcal{U}_n.\label{weak-2}
		\end{align}
		Taking $\boldsymbol{w}=\partial_t\boldsymbol{v}_n$ in \eqref{weak-2}, there holds
		\begin{align}
			&\frac{1}{2}\frac{\mathrm{d}}{\mathrm{d}t}\left\|\sqrt{\widehat{\rho}(\widetilde{\phi}+\psi)}\partial_t\boldsymbol{v}_n\right\|_{\mathbf{L}^2}^2+{2\left\|\sqrt{\widehat{\nu}(\widetilde{\phi}+\psi)}\mathbb{D}\partial_t\boldsymbol{v}_n\right\|_{\mathbf{L}^2}^2}\notag\\
			&\quad=\langle \partial_t\widetilde{\boldsymbol{g}},\partial_t\boldsymbol{v}_n\rangle_{(\mathbf{H}_\sigma^1(\Omega))',\mathbf{H}_\sigma^1(\Omega)}-\frac{1}{2}\int_\Omega \widehat{\rho}'(\widetilde{\phi}+\psi)\partial_t\widetilde{\phi}\,|\partial_t\boldsymbol{v}_n|^2\,\mathrm{d}x-{2\int_\Omega\widehat{\nu}'(\widetilde{ \phi}+\psi)\partial_t\widetilde{ \phi}\,\mathbb{D}\boldsymbol{ v}_n:\nabla\partial_t\boldsymbol{ v}_n \,\mathrm{d}x}\notag\\
		    &\quad\leq {\frac{1}{2}\left\|\sqrt{\widehat{\nu}(\widetilde{\phi}+\psi)}\mathbb{D}\partial_t\boldsymbol{v}_n\right\|_{\mathbf{L}^2}^2}+C\|\partial_t\widetilde{\phi}\|_{L^\infty}\left\|\sqrt{\widehat{\rho}(\widetilde{\phi}+\psi)}\partial_t\boldsymbol{v}_n\right\|_{\mathbf{L}^2}^2+{C(\|\partial_t\widetilde{\boldsymbol{g}}\|_\sharp^2+\|\partial_t\widetilde{ \phi}\|_{H^2}^2\|\mathbb{D}\boldsymbol{ v}_n\|_{\mathbf{L}^2}^2)},\notag
		\end{align}
		which infers that
		\begin{align}
	&\frac{\mathrm{d}}{\mathrm{d}t}\left\|\sqrt{\widehat{\rho}(\widetilde{\phi}+\psi)}\partial_t\boldsymbol{v}_n\right\|_{\mathbf{L}^2}^2+{\left\|\sqrt{\widehat{\nu}(\widetilde{ \phi}+\psi)}\mathbb{D}\partial_t\boldsymbol{v}_n\right\|_{\mathbf{L}^2}^2}\notag\\
	&\quad\leq C\|\partial_t\widetilde{\phi}\|_{H^2}\left\|\sqrt{\widehat{\rho}(\widetilde{\phi}+\psi)}\partial_t\boldsymbol{v}_n\right\|_{\mathbf{L}^2}^2+{C(\|\partial_t\widetilde{\boldsymbol{g}}\|_\sharp^2+\|\partial_t\widetilde{ \phi}\|_{H^2}^2\|\mathbb{D}\boldsymbol{ v}_n\|_{\mathbf{L}^2}^2)}.\label{uniform-3'}
		\end{align}
		Applying Gronwall's inequality to \eqref{uniform-3'}, using \eqref{uniform-2'}, we obtain
		\begin{align}
		&\left\|\sqrt{\widehat{\rho}(\widetilde{\phi}(t)+\psi)}\partial_t\boldsymbol{v}_n(t)\right\|_{\mathbf{L}^2}^2+{\int_0^t\left\|\sqrt{\widehat{\nu}(\widetilde{ \phi}(s)+\psi)}\mathbb{D}\partial_t\boldsymbol{v}_n(s)\right\|_{\mathbf{L}^2}^2\,\mathrm{d}s}\notag\\
		&\quad\leq C e^{C\displaystyle\int_0^T\|\partial_t \widetilde{\phi}(s)\|_{H^2}\,\mathrm{d}s}\left(\left\|\sqrt{\widehat{\rho}(\widetilde{\phi}(0)+\psi)}\partial_t\boldsymbol{v}_n(0)\right\|_{\mathbf{L}^2}^2+\int_0^T\|\partial_t\widetilde{\boldsymbol{g}}(s)\|_{\sharp}^2\,\mathrm{d}s\right)\notag\\
		&\qquad+{ C e^{C\displaystyle\int_0^T\|\partial_t \widetilde{\phi}(s)\|_{H^2}\,\mathrm{d}s}\int_0^T\|\partial_t \widetilde{\phi}(s)\|_{H^2}^2\,\mathrm{d}s\Big(\|\boldsymbol{v}_n(0)\|_{\mathbf{H}^1}^2+T\|\widetilde{\boldsymbol{g}}\|_{L^\infty(0,T;\mathbf{L}^2(\Omega))}^2\Big) }\notag\\
		&\quad\leq C\big(1+\|\partial_t\boldsymbol{v}_n(0)\|_{\mathbf{L}^2}^2\big).\label{uniform3}
		\end{align}
		In order to show the right-hand side of \eqref{uniform3} is uniform with respect to $n\in\mathbb{Z}^+$, we just need to estimate $\|\partial_t\boldsymbol{v}_n(0)\|_{\mathbf{L}^2}^2$. To this end, taking $\boldsymbol{w}=\partial_t\boldsymbol{v}_n(\tau)$ in \eqref{weak-1}, and by integration-by-parts, we obtain
		\begin{align}
			&\left\|\sqrt{\widehat{\rho}(\widetilde{\phi}(\tau)+\psi)}\partial_t\boldsymbol{v}_n(\tau)\right\|_{\mathbf{L}^2}^2\notag\\
			&\quad=\int_\Omega ({\text{div}(2\widehat{\nu}(\widetilde{ \phi}(\tau)+\psi)\mathbb{D}\boldsymbol{v}_n(\tau))}+\widetilde{\boldsymbol{g}}(\tau))\cdot\partial_t\boldsymbol{v}_n(\tau)\,\mathrm{d}x\notag\\
			&\quad\leq \frac{1}{2}	\left\|\sqrt{\widehat{\rho}(\widetilde{\phi}(\tau)+\psi)}\partial_t\boldsymbol{v}_n(\tau)\right\|_{\mathbf{L}^2}^2+C\big({\|\text{div}(\widehat{\nu}(\widetilde{ \phi}(\tau)+\psi)\mathbb{D}\boldsymbol{v}_n(\tau))\|_{\mathbf{L}^2}^2}+\|\widetilde{\boldsymbol{g}}(\tau)\|_{\mathbf{L}^2}^2\big),\notag
		\end{align}
		which infers that
			\begin{align}
			\left\|\sqrt{\widehat{\rho}(\widetilde{\phi}(\tau)+\psi)}\partial_t\boldsymbol{v}_n(\tau)\right\|_{\mathbf{L}^2}^2\leq C\big({\|\text{div}(\widehat{\nu}(\widetilde{ \phi}(\tau)+\psi)\mathbb{D}\boldsymbol{v}_n(\tau))\|_{\mathbf{L}^2}^2}+\|\widetilde{\boldsymbol{g}}(\tau)\|_{\mathbf{L}^2}^2\big).\label{uniform3-2}
		\end{align}
		Since $a_j^n\in H^2(0,T)\hookrightarrow C^1([0,T])$, $\widetilde{\boldsymbol{g}}\in C([0,T];\mathbf{L}^2(\Omega))$ and $\boldsymbol{v}_n(0)=\mathbb{P}_n\boldsymbol{v}_0\in \mathbf{H}_\sigma^2(\Omega)$, we can pass to the limit as $\tau\to0$ in \eqref{uniform3-2}, then we see that
			\begin{align}
			\|\partial_t\boldsymbol{v}_n(0)\|_{\mathbf{L}^2}^2\leq C\big(\|\boldsymbol{v}_0\|_{\mathbf{H}^2}^2+\|\widetilde{\boldsymbol{g}}(0)\|_{\mathbf{L}^2}^2\big) .\label{uniform3-3}
		\end{align}
		Therefore, by \eqref{uniform3} and \eqref{uniform3-3}, we can conclude that
		\begin{align}
		\|\partial_t\boldsymbol{v}_n\|_{L^\infty(0,T;\mathbf{L}_\sigma^2(\Omega))}+	\|\partial_t\boldsymbol{v}_n\|_{L^2(0,T;\mathbf{H}_\sigma^1(\Omega))}\leq C.\label{uniform-3}
		\end{align}
By \eqref{uniform-2}, \eqref{uniform-3} and Aubin--Lions--Simon lemma, we can conclude that there exists a function $\boldsymbol{v}$ such that
\begin{align*}
	\boldsymbol{v}_n&\to\boldsymbol{v}&&\text{weakly star in }L^\infty(0,T;\mathbf{H}_\sigma^1(\Omega))\text{ and strongly in }C([0,T];\mathbf{L}_\sigma^2(\Omega)),\\
	\partial_t\boldsymbol{v}_n&\to\partial_t\boldsymbol{v}&&\text{weakly star in }L^\infty(0,T;\mathbf{L}_\sigma^2(\Omega))\text{ and weakly in }L^2(0,T;\mathbf{H}_\sigma^1(\Omega)). 
\end{align*}
and satisfies
		\begin{align}
			\int_\Omega \widehat{\rho}(\widetilde{\phi}+\psi)\partial_t\boldsymbol{v}\cdot\boldsymbol{w}\,\mathrm{d}x+{2\int_\Omega\widehat{\nu}(\widetilde{ \phi}+\psi) \mathbb{D}\boldsymbol{v}:\nabla\boldsymbol{w}\,\mathrm{d}x}=\int_\Omega \widetilde{\boldsymbol{g}}\cdot\boldsymbol{w}\,\mathrm{d}x\quad\text{for all }\boldsymbol{w}\in \mathbf{H}_\sigma^1(\Omega).\label{weak-3}
		\end{align}
	{By the regularity of $\widehat{\nu}$, $\widetilde{\phi}$ and $\psi$, we see that $\widehat{\nu}(\widetilde{ \phi}+\psi)\in L^\infty(0,T;C^2(\overline{\Omega}))$, which enables us to apply the regularity theory (see Lemma \ref{regularity-theory}) to conclude that
		\[\boldsymbol{v}\in L^\infty(0,T;\mathbf{H}_\sigma^2(\Omega))\cap L^2(0,T;\mathbf{H}^3(\Omega))\]
		and there exists a function $q\in  L^\infty(0,T;H^1(\Omega))\cap L^2(0,T;H^2(\Omega))$ such that
		\begin{align}
		\widehat{\rho}(\widetilde{\phi}+\psi)\partial_t\boldsymbol{v}-\text{div}(2\widehat{\nu}(\widetilde{ \phi}+\psi)\mathbb{D}\boldsymbol{v})+\nabla q=\widetilde{\boldsymbol{g}}\quad\text{a.e. in }\Omega\times(0,T).\label{v-1}
	    \end{align}
}By \eqref{weak-3}, it holds that for almost all $t\in(0,T)$ and all $\boldsymbol{w}\in\mathbf{H}_\sigma^1(\Omega)$,
\begin{align}
		\frac{\mathrm{d}}{\mathrm{d}t}\int_\Omega \widehat{\rho}(\widetilde{\phi}+\psi)\partial_t\boldsymbol{v}\cdot\boldsymbol{w}\,\mathrm{d}x=-{\int_\Omega\partial_t(2\widehat{\nu}(\widetilde{ \phi}+\psi) \mathbb{D}\boldsymbol{v}):\nabla\boldsymbol{w}\,\mathrm{d}x}+\langle \partial_t\widetilde{\boldsymbol{g}},\boldsymbol{w}\rangle_{(\mathbf{H}_\sigma^1(\Omega))',\mathbf{H}_\sigma^1(\Omega)}.\label{weak-4}
\end{align}
By the regularity of $\boldsymbol{v}$ and $\widetilde{\boldsymbol{g}}$, we can show that the right-hand side of \eqref{weak-4} is bounded above by $A(t)\|\boldsymbol{w}\|_{\mathbf{H}^1}$ with
\[A(t):={\|\partial_t(2\widehat{\nu}(\widetilde{ \phi}(t)+\psi) \mathbb{D}\boldsymbol{v}(t))\|_{\mathbf{L}^2}}+\|\partial_t\widetilde{\boldsymbol{g}}(t)\|_\sharp\in L^2(0,T).\]
Hence, it follows from the well-known result \cite[Chapter 3, Lemma 1.1]{Temam} that 
\begin{align}
\partial_t(\widehat{\rho}(\widetilde{\phi}+\psi)\partial_t\boldsymbol{v})\in L^2(0,T;(\mathbf{H}_\sigma^1(\Omega))'),\label{0219-2}
\end{align}
which, together with $\widehat{\rho}(\widetilde{\phi}+\psi)\partial_t\boldsymbol{v}\in L^2(0,T;\mathbf{H}_\sigma^1(\Omega))$, implies that
\[\widehat{\rho}(\widetilde{\phi}+\psi)\partial_t\boldsymbol{v}\in C([0,T];\mathbf{L}_\sigma^2(\Omega)).\]
Furthermore, by the regularity above and \eqref{v-1}, we also obtain that $\partial_t q\in L^2(0,T;L^2(\Omega))$, which, together with the regularity $q\in L^2(0,T;H^2(\Omega))$ and the  Aubin--Lions--Simon lemma, implies that
\[q\in C([0,T];H^1(\Omega)).\]
Finally, we prove the uniqueness of strong solutions. Let $(\boldsymbol{ v}^{(1)},q^{(1)})$, $(\boldsymbol{ v}^{(2)},q^{(2)})$ be two strong solutions to \eqref{linear-1} with the same initial datum $\boldsymbol{ v}_0$ and denote the difference of $(\boldsymbol{ v}^{(1)},q^{(1)})$, $(\boldsymbol{ v}^{(2)},q^{(2)})$ by $(\boldsymbol{ v}^{\mathrm{d}},q^{\mathrm{d}})$, that is, $(\boldsymbol{ v}^{\mathrm{d}},q^{\mathrm{d}})=(\boldsymbol{ v}^{(1)}-\boldsymbol{ v}^{(2)},q^{(1)}-q^{(2)})$. Then $(\boldsymbol{ v}^{\mathrm{d}},q^{\mathrm{d}})$ satisfies
\begin{align}
	\begin{cases}
		\widehat{\rho}(\widetilde{\phi}+{\psi})  \partial_t\boldsymbol{v}^{\mathrm{d}} -  {\text{div}(2\widehat{\nu}(\widetilde{ \phi}+\psi) \mathbb{D} \boldsymbol{v}^{\mathrm{d}})}
		+\nabla q^{\mathrm{d}}
		=\boldsymbol{0},&\text{a.e. in }\Omega\times(0,T),\\
		\boldsymbol{v}^{\mathrm{d}}=\boldsymbol{0},&\text{a.e. on }\Gamma\times(0,T),\\
		\boldsymbol{v}^{\mathrm{d}}|_{t=0}=\boldsymbol{0},&\text{a.e. in }\Omega.
	\end{cases}\label{linear-1-difference}
\end{align}	
Testing \eqref{linear-1-difference}$_1$ by $\partial_t\boldsymbol{ v}^{\mathrm{d}}$ and using integration-by-parts, we have
\begin{align*}
  &{\frac{\mathrm{d}}{\mathrm{d}t}\left\|\sqrt{\widehat{\nu}(\widetilde{ \phi}+\psi)}\mathbb{D}\boldsymbol{ v}^{\mathrm{d}}\right\|_{\mathbf{L}^2}^2}+\int_\Omega\widehat{\rho}(\widetilde{ \phi}+\psi)|\partial_t{ \boldsymbol{v}}^{\mathrm{d}}|^2\,\mathrm{d}x\\
  &\quad={\int_\Omega\widehat{\nu}'(\widetilde{ \phi}+\psi)\partial_t\widetilde{ \phi}|\nabla\boldsymbol{ v}^{\mathrm{d}}|^2\,\mathrm{d}x\leq \frac{\widetilde{C}_1}{\nu_\ast}\|\partial_t\widetilde{\phi}\|_{L^\infty}\left\|\sqrt{\widehat{\nu}(\widetilde{ \phi}+\psi)}\mathbb{D}\boldsymbol{ v}^{\mathrm{d}}\right\|_{\mathbf{L}^2}^2,}
\end{align*}
which infers that $\|\mathbb{D}\boldsymbol{ v}^{\mathrm{d}}(t)\|_{\mathbf{L}^2}=0$ for all $t\in[0,T]$. By the boundary condition $\boldsymbol{ v}^{\mathrm{d}}=\boldsymbol{0}$ almost everywhere on $\Gamma\times(0,T)$, we can conclude that $\boldsymbol{ v}^{\mathrm{d}}=\boldsymbol{0}$ almost everywhere in $\Omega\times(0,T)$. Therefore, we complete the proof of Lemma \ref{solve-linear1}.
\end{proof}

\begin{rem}
	\label{solve-linear1-N}
	Under the same setting as Lemma \ref{solve-linear1} except the regularity of $\widetilde{\phi}$ replaced by 
	\[\widetilde{\phi}\in C([0,T];H_N^5),\quad \partial_t\widetilde{\phi}\in C([0,T];H_{(0)}^1(\Omega))\cap L^2(0,T;H_N^3),\]
	according to Remark \ref{regularity-2}, we can obtain the same result as Lemma \ref{solve-linear1} following a similar argument as the proof of Lemma \ref{solve-linear1}.
\end{rem}

For the second linearized subsystem \eqref{linear-2}, by standard Galerkin method, we have the following result on the existence and uniqueness of strong solutions to problem \eqref{linear-2}.

\begin{lem}
\label{solve-linear2}
Let $\phi_0\in {H}_D^3\cap H^5(\Omega)$ and $(\widetilde{\boldsymbol{v}},\widetilde{\phi})$ enjoy the regularity \eqref{regularity-1}. Then, there exists a unique strong solution $\phi$ to problem \eqref{linear-2} on $[0,T]$ satisfying the regularity described in Proposition \ref{solve-approximating}.
\end{lem}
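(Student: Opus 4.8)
The plan is to follow the pattern of the proof of Lemma~\ref{solve-linear1}: a Galerkin approximation, a hierarchy of uniform a priori estimates, passage to the limit, and an energy estimate for uniqueness. Since \eqref{linear-2} is a decoupled linear fourth-order parabolic equation with the boundary conditions $\phi=\Delta\phi=0$, I would work in the basis $\{e_j\}_{j\ge1}$ of eigenfunctions of the Dirichlet Laplacian $-\Delta$ on $\Omega$, with eigenvalues $0<\mu_1\le\mu_2\le\cdots$. Because $\Delta^m e_j=(-\mu_j)^m e_j$, every $\Delta^m e_j$ vanishes on $\Gamma$, so the finite-dimensional space $\mathcal V_n=\mathrm{span}\{e_1,\dots,e_n\}$ is invariant under $\Delta$ and all integrations by parts performed below against a member of $\mathcal V_n$ are free of boundary terms. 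Writing $\phi_n(t)=\sum_{j=1}^n b_j^n(t)e_j$ and letting $\mathbb P_n$ be the $L^2$-orthogonal projection onto $\mathcal V_n$, the Galerkin identity $(\partial_t\phi_n,w)_{L^2}+(\Delta\phi_n,\Delta w)_{L^2}=(\widetilde f,w)_{L^2}$ for all $w\in\mathcal V_n$ with $\phi_n|_{t=0}=\mathbb P_n\phi_0$ reduces to the decoupled system $\dot b_j^n+\mu_j^2 b_j^n=(\widetilde f,e_j)_{L^2}$, which is solved by the standard theory of ODEs, using $\widetilde f\in L^2(0,T;L^2(\Omega))$ from \eqref{f-regularity}.

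Then I would derive $n$-uniform estimates in layers. Testing with $\phi_n$ and then with $\Delta^2\phi_n\in\mathcal V_n$, using Young's inequality together with elliptic regularity for $-\Delta$ with homogeneous Dirichlet data (applied to $\phi_n=\Delta\phi_n=0$ on $\Gamma$), gives $\phi_n$ bounded in $L^\infty(0,T;H^2(\Omega))\cap L^2(0,T;H^4(\Omega))$; here $\|\Delta\mathbb P_n\phi_0\|_{L^2}\le\|\Delta\phi_0\|_{L^2}$ since $\phi_0\in H^2(\Omega)\cap H_0^1(\Omega)$, and the forcing is controlled by $\|\widetilde f\|_{L^2(0,T;L^2)}$. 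Differentiating the Galerkin identity in $t$ gives $(\partial_{tt}\phi_n,w)_{L^2}+(\Delta\partial_t\phi_n,\Delta w)_{L^2}=(\partial_t\widetilde f,w)_{L^2}$; testing this with $\partial_t\phi_n$ and with $-\Delta\partial_t\phi_n$, absorbing the forcing by means of the Poincar\'e inequality \eqref{Poincare} applied to $\Delta\partial_t\phi_n\in H_0^1(\Omega)$ (so that only $\|\partial_t\widetilde f\|_{L^2(0,T;L^2)}<\infty$, available from \eqref{f-regularity}, is needed and no boundary term appears), and applying Gronwall's inequality, I would obtain $\partial_t\phi_n$ bounded in $L^\infty(0,T;H^1(\Omega))\cap L^2(0,T;H^3(\Omega))$ --- provided $\partial_t\phi_n(0)$ is bounded in $H^1(\Omega)$. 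The latter is handled by evaluating the Galerkin identity at $t=0$, which identifies $\partial_t\phi_n(0)=\mathbb P_n\bigl(\widetilde f(0)-\Delta^2\phi_0\bigr)$, and by using $\phi_0\in H_D^3\cap H^5(\Omega)$ together with the boundary trace structure of $\widetilde f(0)$ inherited from \eqref{f-regularity}.

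Passing to the limit $n\to\infty$ via the Aubin--Lions--Simon lemma would yield $\phi$ solving \eqref{linear-2} almost everywhere, with $\phi=\Delta\phi=0$ on $\Gamma$ (hence $\partial_t\phi=\Delta\partial_t\phi=0$ on $\Gamma$) by continuity of traces, and with $\partial_t\phi\in L^\infty(0,T;H^1(\Omega))\cap L^2(0,T;H^3(\Omega))$. The top-order spatial regularity is then read off from the equation: $\Delta^2\phi=\widetilde f-\partial_t\phi\in L^2(0,T;H^3(\Omega))$ by \eqref{f-regularity}, so two successive applications of elliptic regularity for the Dirichlet Laplacian (first $\Delta\xi=\widetilde f-\partial_t\phi$ with $\xi:=\Delta\phi$, $\xi|_\Gamma=0$, then $\Delta\phi=\xi$, $\phi|_\Gamma=0$) give $\phi\in L^2(0,T;H^7(\Omega))$. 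The time-continuity claims follow from a standard interpolation argument (cf.~\cite[Chapter~3, Lemma~1.1]{Temam}) using the bounds above and $\partial_{tt}\phi=\partial_t\widetilde f-\Delta^2\partial_t\phi\in L^2(0,T;(H_0^1(\Omega))')$: this gives $\partial_t\phi\in C([0,T];H_0^1(\Omega))$, and then $\Delta^2\phi=\widetilde f-\partial_t\phi\in C([0,T];H^1(\Omega))$ upgrades to $\phi\in C([0,T];H_D^3\cap H^5(\Omega))$. Uniqueness is immediate: the difference $\phi^{\mathrm d}$ of two solutions solves the homogeneous equation with zero initial datum, so testing with $\phi^{\mathrm d}$ gives $\tfrac12\tfrac{\mathrm d}{\mathrm dt}\|\phi^{\mathrm d}\|_{L^2}^2+\|\Delta\phi^{\mathrm d}\|_{L^2}^2=0$, whence $\phi^{\mathrm d}\equiv0$.

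The step I expect to be the main obstacle is the second layer of estimates. One cannot reach the $H^5$/$H^7$-regularity by testing the original equation with high even powers such as $\Delta^4\phi_n$ or $\Delta^6\phi_n$: since $\widetilde f$ enjoys only \eqref{f-regularity} and in general does not vanish on $\Gamma$, such test functions generate boundary integrals involving traces of $\phi_n$ of order exceeding what the dissipation controls. Routing the argument through the time-differentiated equation --- where $\partial_t\phi_n$ and $\Delta\partial_t\phi_n$ \emph{do} vanish on $\Gamma$ --- removes this difficulty, but then forces the estimate of $\partial_t\phi_n(0)$ in $H^1(\Omega)$, which is the genuinely delicate point: it is here that the full strength of $\phi_0\in H_D^3\cap H^5(\Omega)$, together with the boundary compatibility of $\widetilde f(0)$ with $\phi_0$, is used. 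All remaining steps are routine and run in parallel with the proof of Lemma~\ref{solve-linear1}.
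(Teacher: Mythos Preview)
Your proposal is correct and follows essentially the same route as the paper: a Galerkin scheme in the Dirichlet--Laplace eigenbasis, a first layer of energy estimates, a second layer obtained by differentiating the Galerkin identity in time and testing with $\partial_t\phi_n$ and $-\Delta\partial_t\phi_n$, the control of $\partial_t\phi_n(0)$ in $H^1$ via the equation at $t=0$ and the hypothesis $\phi_0\in H_D^3\cap H^5(\Omega)$, passage to the limit, recovery of $\phi\in L^2(0,T;H^7(\Omega))$ by elliptic regularity applied to $\Delta^2\phi=\widetilde f-\partial_t\phi$, and uniqueness by the energy identity for the difference. Your identification of the $H^1$-bound on $\partial_t\phi_n(0)$ as the delicate step, and your observation that testing with high even powers of $\Delta$ directly fails because $\widetilde f$ need not vanish on $\Gamma$, exactly mirrors the paper's strategy; the only cosmetic differences are your choice of $\phi_n$ and $\Delta^2\phi_n$ (versus the paper's $-\Delta\phi_n$ and $\partial_t\phi_n$) in the first layer, and your absorption of the forcing $(\partial_t\widetilde f,-\Delta\partial_t\phi_n)$ via Poincar\'e on $\Delta\partial_t\phi_n\in H_0^1(\Omega)$ rather than the paper's direct $H^1$-pairing.
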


\begin{proof}
Let $-\Delta_D$ denote the minus Laplace operator associated with homogeneous Dirichlet boundary condition and $\{\lambda^D_i\}_{i\geq1}$ be the eigenvalues of $-\Delta_D$ with corresponding eigenfunctions $\{\xi_i\}_{i\geq1}$. For any $n\in\mathbb{Z}^+$, we define a finite dimension space $\mathcal{V}^D_n=\text{span}\{\xi_1,...,\xi_n\}$ and an approximating solution
\[\phi_n=\sum_{i=1}^n b_i^n(t)\xi_i\]
where $b_i^n\in H^2(0,T)$ will be determined later. The orthogonal projection on $\mathcal{V}^D_n$ with respect to the inner product in $L^2(\Omega)$ is denoted by $\mathbb{P}^D_n$. By the theory of ODEs, we can find the coefficients $b_i^n\in H^2(0,T)$ such that
\begin{align}
	\int_\Omega \partial_t \phi_n z\,\mathrm{d}x+\int_\Omega \Delta \phi_n \Delta z\,\mathrm{d}x=\int_\Omega \widetilde{f}z\,\mathrm{d}x\quad\text{for all }z\in \mathcal{V}^D_n,\label{0409-weak1}
\end{align}
with initial datum $\phi_n|_{t=0}=\mathbb{P}^D_n \phi_0$.

Now, we derive some uniform estimates with respect to $n\in\mathbb{Z}^+$. In the following part of this proof, the generic constant $C$ may differ from line to line and is independent of $n\in\mathbb{Z}^+$.
Taking $z=-\Delta \phi_n$ in \eqref{0409-weak1}, we see that
\begin{align}
	\frac{1}{2}\frac{\mathrm{d}}{\mathrm{d}t}\|\nabla\phi_n\|_{\mathbf{L}^2}^2+\|\nabla\Delta\phi_n\|_{\mathbf{L}^2}^2=\int_\Omega \nabla \widetilde{f}\cdot\nabla\phi_n\,\mathrm{d}x\leq \frac{1}{2}\|\nabla\widetilde{f}\|_{\mathbf{L}^2}^2+\frac{1}{2}\|\nabla\phi_n\|_{\mathbf{L}^2}^2,\label{0409-uni1}
\end{align}
which, together with Gronwall's inequality, infers that
\begin{align}
	\|\nabla\phi_n(t)\|_{\mathbf{L}^2}^2\leq e^T \Big(\|\nabla\phi_n(0)\|_{\mathbf{L}^2}^2+\int_0^T \|\nabla\widetilde{f}(s)\|_{\mathbf{L}^2}^2\,\mathrm{d}s\Big)\leq e^T\Big(\|\phi_0\|_{H^1}^2+\int_0^T \|\nabla\widetilde{f}(s)\|_{\mathbf{L}^2}^2\,\mathrm{d}s\Big).\label{0409-uni2}
\end{align}
Integrating \eqref{0409-uni1} over $(0,T)$, it holds
\begin{align}
	\int_0^T\|\nabla\Delta\phi_n(t)\|_{\mathbf{L}^2}^2\,\mathrm{d}t\leq CT e^T \Big(\|\phi_0\|_{H^1}^2+\int_0^T \|\nabla\widetilde{f}(s)\|_{\mathbf{L}^2}^2\,\mathrm{d}s\Big).\label{0409-uni3}
\end{align} 
By \eqref{0409-uni2}, \eqref{0409-uni3} and Poincar\'e's inequality \eqref{Poincare}, we obtain
\begin{align}
	\|\phi_n\|_{L^\infty(0,T;H^1(\Omega))}+\|\phi_n\|_{L^2(0,T;H^3(\Omega))}\leq C.\label{0409-uni4}
\end{align}
Next, taking $z=\partial_t\phi_n$ in \eqref{0409-weak1}, we obtain
\begin{align}
  \frac{1}{2}\frac{\mathrm{d}}{\mathrm{d}t}\|\Delta\phi_n\|_{L^2}^2+\|\partial_t\phi_n\|_{L^2}^2=\int_\Omega\widetilde{f}\partial_t\phi_n\,\mathrm{d}x\leq \frac{1}{2}\|\widetilde{f}\|_{L^2}^2+\frac{1}{2}\|\partial_t\phi_n\|_{L^2}^2,\notag
\end{align}
which, together with \eqref{0409-uni4} and elliptic regularity theory, indicates
\begin{align}
\|\phi_n\|_{L^\infty(0,T;H^2(\Omega))}+\|\partial_t\phi_n\|_{L^2(0,T;L^2(\Omega))}\leq C.\label{0409-uni5}
\end{align}
Now, taking time derivative in \eqref{0409-weak1}, we have
\begin{align}
	\int_\Omega \partial_{tt} \phi_n z\,\mathrm{d}x+\int_\Omega \Delta \partial_t\phi_n \Delta z\,\mathrm{d}x=\int_\Omega \partial_t\widetilde{f}z\,\mathrm{d}x\quad\text{for all }z\in \mathcal{V}^D_n.\label{0429-weak1}
\end{align}
Taking $z=\partial_t\phi_n$ in \eqref{0429-weak1}, it holds
\begin{align}
	\frac{1}{2}\frac{\mathrm{d}}{\mathrm{d}t}\|\partial_t\phi_n\|_{L^2}^2+\|\Delta\partial_t\phi_n\|_{L^2}^2=\int_\Omega\partial_t\widetilde{f}\partial_t\phi_n\,\mathrm{d}x\leq \frac{1}{2}\|\partial_t\widetilde{f}\|_{L^2}^2+\frac{1}{2}\|\partial_t\phi_n\|_{L^2}^2,\notag
\end{align}
which, together with Gronwall's inequality, indicates that
\begin{align}
	\|\partial_t\phi_n\|_{L^\infty(0,T;L^2(\Omega))}^2+\|\Delta\partial_t\phi_n\|_{L^2(0,T;L^2(\Omega))}^2\leq Ce^T(\|\partial_t\phi_n(0)\|_{L^2}^2+\|\partial_t\widetilde{f}\|_{L^2(0,T;L^2(\Omega))}^2).\label{0429-es1}
\end{align}
In order to control $\|\partial_t\phi_n(0)\|_{L^2}^2$, we take $z=\partial_t\phi_n(\tau)$ in \eqref{0409-weak1} and we see that
\begin{align}
	\|\partial_t\phi_n(\tau)\|_{L^2}^2&=\int_\Omega(-\Delta^2\phi_n(\tau)+\widetilde{f}(\tau))\partial_t\phi_n(\tau)\,\mathrm{d}x\notag\\
	&\leq \frac{1}{2}\|\partial_t\phi_n(\tau)\|_{L^2}^2+\frac{1}{2}\|\Delta^2\phi_n(\tau)\|_{L^2}^2+\frac{1}{2}\|\widetilde{f}(\tau)\|_{L^2}^2.\label{0429-es2}
\end{align}
Since $b_i^n\in H^2(0,T)\hookrightarrow C^1([0,T])$ and $\widetilde{f}\in C([0,T];H^2(\Omega))$, we see that
\[\|\Delta^2\phi_n(\tau)\|_{L^2}+\|\widetilde{f}(\tau)\|_{L^2}^2\to \|\Delta^2\phi_n(0)\|_{L^2}^2+\|\widetilde{f}(0)\|_{L^2}^2\quad\text{as }\tau\to0,\]
which, together with \eqref{0429-es2}, infers that
\begin{align}
	\|\partial_t\phi_n(0)\|_{L^2}^2\leq \|\Delta^2\phi_n(0)\|_{L^2}^2+\|\widetilde{f}(0)\|_{L^2}^2\leq \|\phi_0\|_{H^4}^2+\|\widetilde{f}(0)\|_{L^2}^2.\label{0429-es3}
\end{align}
Therefore, by \eqref{0429-es1}, \eqref{0429-es3} and the elliptic regularity theory, we can conclude that
\begin{align}
	\|\partial_t\phi_n\|_{L^\infty(0,T;L^2(\Omega))}^2+\|\partial_t\phi_n\|_{L^2(0,T;H^2(\Omega))}^2\leq C.\label{0429-uni1}
\end{align}
Finally, we derive some higher order regularity of $\partial_t\phi_n$. Taking $z=-\Delta\partial_t\phi_n$ in \eqref{0429-weak1}, it holds
\begin{align}
	\frac{1}{2}\frac{\mathrm{d}}{\mathrm{d}t}\|\nabla\partial_t\phi_n\|_{\mathbf{L}^2}^2+\|\nabla\Delta\partial_t\phi_n\|_{\mathbf{L}^2}^2=\int_\Omega\nabla\partial_t \widetilde{f}\cdot\nabla\partial_t\phi_n\,\mathrm{d}x\leq \frac{1}{2}\|\nabla\partial_t \widetilde{f}\|_{\mathbf{L}^2}^2+\frac{1}{2}\|\nabla\partial_t\phi_n\|_{\mathbf{L}^2}^2,\label{0429-es4}
\end{align}
by Gronwall's inequality, \eqref{0429-es4} implies that
\begin{align}
\|\nabla\partial_t\phi_n\|_{L^\infty(0,T;\mathbf{L}^2(\Omega))}^2+\|\nabla\Delta\partial_t\phi_n\|_{L^2(0,T;\mathbf{L}^2)}^2\leq C\big(\|\nabla\partial_t\phi_n(0)\|_{\mathbf{L}^2}^2+\|\partial_t\widetilde{f}\|_{L^2(0,T;H^1(\Omega))}^2\big).\label{0429-es5}
\end{align}
In order to control $\|\nabla\partial_t\phi_n(0)\|_{\mathbf{L}^2}^2$, we take $z=-\Delta\partial_t\phi_n(\tau)$ in \eqref{0409-weak1} and we see that
\begin{align}
\|\nabla\partial_t\phi_n(\tau)\|_{\mathbf{L}^2}^2&=\int_\Omega(-\nabla\Delta^2\phi_n(\tau)+\nabla\widetilde{f}(\tau))\cdot\nabla\partial_t\phi_n(\tau)\,\mathrm{d}x\notag\\
&\leq \frac{1}{2}\|\nabla\partial_t\phi_n(\tau)\|_{\mathbf{L}^2}^2+\|\nabla\Delta^2\phi_n(\tau)\|_{\mathbf{L}^2}^2+\|\nabla\widetilde{f}(\tau)\|_{\mathbf{L}^2}^2.\label{0429-es6}
\end{align}
Since $b_i^n\in H^2(0,T)\hookrightarrow C^1([0,T])$ and $\widetilde{f}\in C([0,T];H^2(\Omega))$, we see that
\[\|\nabla\Delta^2\phi_n(\tau)\|_{\mathbf{L}^2}^2+\|\nabla\widetilde{f}(\tau)\|_{\mathbf{L}^2}^2\to \|\nabla\Delta^2\phi_n(0)\|_{\mathbf{L}^2}^2+\|\nabla\widetilde{f}(0)\|_{\mathbf{L}^2}^2\quad\text{as }\tau\to0,\]
which, together with \eqref{0429-es6}, implies that
\begin{align}
	\|\nabla\partial_t\phi_n(0)\|_{\mathbf{L}^2}^2\leq \|\nabla\Delta^2\phi_n(0)\|_{\mathbf{L}^2}^2+\|\nabla\widetilde{f}(0)\|_{\mathbf{L}^2}^2\leq \|\phi_0\|_{H^5}^2+\|\widetilde{f}(0)\|_{H^1}^2.\label{0429-uni2}
\end{align}
Hence, by \eqref{0429-es5}, \eqref{0429-uni2}, Poincare's inequality \eqref{Poincare} and the elliptic regularity theory, we deduce that
\begin{align}
	\|\partial_t\phi_n\|_{L^\infty(0,T;H^1(\Omega))}^2+\|\partial_t\phi_n\|_{L^2(0,T;H^3(\Omega)}^2\leq C.\label{0429-uni3}
\end{align}
According to \eqref{0409-uni4}, \eqref{0409-uni5}, \eqref{0429-uni1} and \eqref{0429-uni3}, we can conclude that there exists a function $\phi$ such that
\begin{align*}
	\phi_n&\to\phi&&\text{weakly star in }L^\infty(0,T;H^2(\Omega))\cap L^2(0,T;H_D^3),\\
	\partial_t\phi_n&\to\partial_t\phi&&\text{weakly star in }L^\infty(0,T;H_0^1(\Omega))\cap L^2(0,T;H_D^3).
\end{align*}
Then, we can write \eqref{0409-weak1} as 
\begin{align}
	\int_\Omega \partial_t \phi_n z\,\mathrm{d}x-\int_\Omega \nabla\Delta \phi_n \cdot\nabla z\,\mathrm{d}x=\int_\Omega \widetilde{f}z\,\mathrm{d}x\quad\text{for all }z\in \mathcal{V}^D_n,\notag
\end{align}
and pass to the limit as $n\to+\infty$ in the equality above to conclude that
\begin{align}
	\int_\Omega \partial_t\phi z\,\mathrm{d}x-\int_\Omega \nabla\Delta\phi\cdot\nabla z\,\mathrm{d}x=\int_\Omega \widetilde{f}z\,\mathrm{d}x\quad\text{for all }z\in H^1_0(\Omega),\notag
\end{align}
which indicates that $\Delta\phi$ can be seen as a weak solution to the following elliptic problem
\begin{align}
	\begin{cases}
		-\Delta w=\partial_t\phi-\widetilde{f},&\text{in }\Omega,\\
		w=0,&\text{on }\Gamma.
	\end{cases}\notag
\end{align}
Since $\partial_t\phi\in L^2(0,T;H^3(\Omega))$ and $\widetilde{f}\in L^2(0,T;H^3(\Omega))$, by the elliptic regularity theory, we can conclude that $\phi\in L^2(0,T;H^7(\Omega))$ so that $\phi$ becomes a strong solution to \eqref{linear-2}. By Aubin--Lions--Simon lemma, we see that $\phi\in C([0,T];H^5(\Omega))$, which, together with \eqref{f-regularity} and \eqref{linear-2}$_1$, also infers that $\partial_t\phi\in C([0,T];H_0^1(\Omega
))$.

%

Finally, we prove the uniqueness of strong solutions. Let $\phi^{(1)}$, $\phi^{(2)}$ be two strong solutions to \eqref{linear-2} with the same initial datum $\phi_0$ and denote the difference of $\phi^{(1)}$, $\phi^{(2)}$ by $\phi^{\mathrm{d}}$, that is, $\phi^{\mathrm{d}}=\phi^{(1)}-\phi^{(2)}$. Then $\phi^{\mathrm{d}}$ satisfies
\begin{align}
	\begin{cases}
		\partial_t \phi^{\mathrm{d}}+\Delta^2 \phi^{\mathrm{d}}=0,&\text{a.e. in }\Omega\times(0,T),\\
		\phi^{\mathrm{d}}=\Delta\phi^{\mathrm{d}}=0, &\text{a.e. on }\Gamma\times(0,T),\\
		\phi^{\mathrm{d}}|_{t=0}=0,&\text{a.e. in }\Omega.\\
	\end{cases}\label{linear-2-difference}
\end{align}
Testing \eqref{linear-2-difference}$_1$ by $\phi^{\mathrm{d}}$, using integration-by-parts, we see that
\begin{align}
	\frac{1}{2}\frac{\mathrm{d}}{\mathrm{d}t}\|\phi^{\mathrm{d}}\|_{L^2}^2+\|\Delta\phi^{\mathrm{d}}\|_{L^2}^2=0,\notag
\end{align}
which infers that $\|\phi^{\mathrm{d}}(t)\|_{L^2}=0$ for all $t\in[0,T]$. Therefore, we complete the proof of Lemma \ref{solve-linear2}.
\end{proof}

\begin{rem}
	\label{solve-linear2-N}
	When we consider the solvability of problem \eqref{app-system-N}, we need to consider the linearized problem \eqref{linear-1} and the following linearized problem
	\begin{align}
		\begin{cases}
			\partial_t \phi+\Delta^2 \phi=\widetilde{f},&\text{in }\Omega\times(0,T),\\
			\partial_{\mathbf{n}}\phi=\partial_{\mathbf{n}}\Delta\phi=0, &\text{on }\Gamma\times(0,T),\\
			\phi|_{t=0}=\phi_0,&\text{in }\Omega.
		\end{cases}\label{linear-2-N}
	\end{align}
We can conclude that for $\phi_0\in {H}_N^5$ and $(\widetilde{\boldsymbol{v}},\widetilde{\phi})$ satisfying
 \begin{align*}
 	&\widetilde{\boldsymbol{v}}\in {L^\infty(0,T;\mathbf{H}_\sigma^2(\Omega))}\cap L^2(0,T;\mathbf{H}^3(\Omega)),\quad \partial_t \widetilde{\boldsymbol{v}}\in{L^\infty(0,T;\mathbf{L}_\sigma^2(\Omega))}\cap L^2(0,T;\mathbf{H}_\sigma^1(\Omega)),\\
 	&\widetilde{\phi}\in {L^\infty(0,T;H_N^5(\Omega))},\quad \partial_t\widetilde{\phi}\in {L^\infty(0,T;H_{(0)}^1(\Omega))}\cap L^2(0,T;H_N^3),
 \end{align*}
problem \eqref{linear-2-N} admits a unique strong solution $\phi$ on $[0,T]$ satisfying the regularity property in Remark \ref{solve-N}. 
The proof of the conclusion above is similar to the proof of Lemma \ref{solve-linear2}, the only difference is that we need to consider $-\Delta_N$, which is the minus Laplace operator associated with homogeneous Neumann boundary condition, instead of $-\Delta_D$.
 \end{rem}


%


\subsection{Proof of Theorem \ref{Main-1}}
Let $(\boldsymbol{v}_0,\phi_0)\in \mathbf{H}_\sigma^2(\Omega)\times(H_D^3\cap H^5(\Omega))$. In view of Lemmas \ref{solve-linear1} and \ref{solve-linear2}, for any given $T>0$ and $(\widetilde{\boldsymbol{v}},\widetilde{\phi})$ satisfying the regularity \eqref{regularity-1} with $(\widetilde{\boldsymbol{v}},\widetilde{\phi})|_{t=0}=(\boldsymbol{v}_0,\phi_0)$, the linearized problem \eqref{linearized} admits a unique strong solution $(\boldsymbol{v},\phi,q)$ on $[0,T]$. To emphasize such relation between $(\widetilde{\boldsymbol{v}},\widetilde{\phi})$ and $(\boldsymbol{v},\phi)$, we define $(\boldsymbol{v},\phi)=\mathcal{S}(\widetilde{\boldsymbol{v}},\widetilde{\phi})$.
In addition, we can deduce from \eqref{uniform3-3} and \eqref{0429-uni2} that 
\begin{align}
	\|\partial_t\boldsymbol{v}(0)\|_{\mathbf{L}^2}+\|\partial_t\phi(0)\|_{H^1}\leq C_0,\label{phi_t0'}
\end{align}
where the positive constant $C_0$ depends on the initial data $(\boldsymbol{v}_0,\phi_0)$, and is independent of $(\widetilde{\boldsymbol{v}},\widetilde{\phi})$.
In this subsection, the generic constant $C>0$ may differ from line to line and depends only on $(\boldsymbol{ v}_0,\phi_0)$, $\Omega$ and coefficients of the system.
Now, let us define 
{
\begin{align*}
E_\kappa^T=\Big\{(\widetilde{\boldsymbol{v}},\widetilde{\phi})\ \text{satisfies regularity}\ \eqref{regularity-1}:\ &\|(\widetilde{\boldsymbol{v}},\widetilde{\phi})\|_{\mathcal{V}_T}\leq \kappa,\ 	\|\partial_t\widetilde{\boldsymbol{v}}(0)\|_{\mathbf{L}^2}+\|\partial_t\widetilde{\phi}(0)\|_{H^1}\leq C_0,\\
&\text{ and } (\widetilde{\boldsymbol{v}}(0),\widetilde{\phi}(0))=(\boldsymbol{v}_0,\phi_0)\Big\}
\end{align*}
}where{ 
\begin{align*}
	\|(\widetilde{\boldsymbol{v}},\widetilde{\phi})\|_{\mathcal{V}_T}^2:&=\|\widetilde{\boldsymbol{v}}\|_{L^\infty(0,T;\mathbf{H}_\sigma^2(\Omega))}^2+\|\partial_t\widetilde{\boldsymbol{v}}\|_{L^\infty(0,T;\mathbf{L}_\sigma^2(\Omega))}^2+\|\partial_t\widetilde{\boldsymbol{v}}\|_{L^2(0,T;\mathbf{H}_\sigma^1(\Omega))}^2\\
	&\quad+\|\widetilde{\phi}\|_{L^\infty(0,T;H^5(\Omega))}^2+\|\partial_t\widetilde{\phi}\|_{L^\infty(0,T;H^1(\Omega))}^2+\|\partial_t\widetilde{\phi}\|_{L^2(0,T;H^3(\Omega))}^2.
\end{align*}  
}Now, by choosing suitable $\kappa$ and $T$, we have the following result about the uniform boundedness of $\mathcal{S}(\widetilde{{\boldsymbol{v}}},\widetilde{ \phi})$.
\begin{lem}\label{boundedness-1}
There exist two constants $T_2\in(0,1)$ and $\kappa\geq 1$ depending on $(\boldsymbol{v}_0,\phi_0)$, such that
\begin{align}
	\kappa^{16} T_2=1\quad\text{and}\quad \|\mathcal{S}(\widetilde{\boldsymbol{v}},\widetilde{\phi})\|_{\mathcal{V}_{T_2}}\leq \kappa\quad\text{for all }(\widetilde{\boldsymbol{v}},\widetilde{\phi})\in E_\kappa^{T_2}.\label{bounded}
\end{align}
\end{lem}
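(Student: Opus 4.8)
The plan is to fix a parameter $\kappa\geq 1$, set $T_2:=\kappa^{-16}$, take an arbitrary $(\widetilde{\boldsymbol v},\widetilde\phi)\in E_\kappa^{T_2}$, write $(\boldsymbol v,\phi,q)$ for the solution of the linearized problem \eqref{linearized} it produces (so $(\boldsymbol v,\phi)=\mathcal S(\widetilde{\boldsymbol v},\widetilde\phi)$), and bound each of the six constituents of $\|(\boldsymbol v,\phi)\|_{\mathcal V_{T_2}}^2$ by a constant $C_\ast$ that depends only on $(\boldsymbol v_0,\phi_0)$, $\Omega$ and the coefficients of the system, but \emph{not} on $\kappa$; once this is done it suffices to choose $\kappa:=\max\{1,\sqrt{C_\ast}\}$ and $T_2:=\kappa^{-16}$, which gives \eqref{bounded}. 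The estimates to be used are exactly the $n$-uniform a priori bounds derived inside the proofs of Lemma \ref{solve-linear1} (namely \eqref{uniform-2'}, \eqref{uniform3}, \eqref{uniform3-3}) and Lemma \ref{solve-linear2} (namely \eqref{0409-uni2}--\eqref{0429-uni2}), together with the stationary regularity estimate of Lemma \ref{regularity-theory} with $k=0$ for the $\mathbf H^2$-bound on $\boldsymbol v$; the whole point is to make the right-hand sides of all of them $\kappa$-free by exploiting the balance $\kappa^{16}T_2=1$.

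The first step is to observe that on the short interval $[0,T_2]$ the iterate stays, in its low-order norms, within $O(1)$ of the initial data. Indeed, since every $L^2(0,T_2;\cdot)$-piece of $\|(\widetilde{\boldsymbol v},\widetilde\phi)\|_{\mathcal V_{T_2}}$ is $\leq\kappa$ and $(\widetilde{\boldsymbol v}(0),\widetilde\phi(0))=(\boldsymbol v_0,\phi_0)$, for every $t\in[0,T_2]$ one has $\|\widetilde\phi(t)\|_{H^3}\leq\|\phi_0\|_{H^3}+T_2^{1/2}\|\partial_t\widetilde\phi\|_{L^2(0,T_2;H^3)}\leq\|\phi_0\|_{H^3}+\kappa^{-7}\leq C$ and likewise $\|\widetilde{\boldsymbol v}(t)\|_{\mathbf H^1}\leq C$; by Sobolev embedding $\|\widetilde\phi(t)\|_{C^1(\overline\Omega)}\leq C$, so the coefficients $\widehat\rho(\widetilde\phi+\psi)$, $\widehat\nu(\widetilde\phi+\psi)$ have $C^1(\overline\Omega)$-norms, and the nonlinearities $\widehat\Psi^{(j)}(\widetilde\phi+\psi)$ have $L^\infty$-norms (and, using also $\|\widetilde\phi\|_{L^\infty H^5}\leq\kappa$ together with the short interval, their $H^2$-norms), bounded uniformly in $t\in[0,T_2]$ by data-constants. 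This is what keeps the elliptic/parabolic constants and the low-order parts of the a priori estimates from depending on $\kappa$.

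The second step feeds these bounds, together with $\|(\widetilde{\boldsymbol v},\widetilde\phi)\|_{\mathcal V_{T_2}}\leq\kappa$, into the source estimates of Lemma \ref{regularity}. One finds $\|\widetilde{\boldsymbol g}\|_{L^\infty(0,T_2;\mathbf H^1)}+\|\widetilde f\|_{L^\infty(0,T_2;H^2)}\leq C\kappa^{N}$ for a fixed power $N$, while every $L^2(0,T_2;\cdot)$- or $L^1(0,T_2;\cdot)$-norm of $\widetilde{\boldsymbol g}$, $\widetilde f$, $\partial_t\widetilde{\boldsymbol g}$, $\partial_t\widetilde f$ carries at least a factor $T_2^{1/2}$, $T_2^{1/8}$ or $T_2^{3/4}$ against the $\kappa$-powers (for instance the interpolation $H^2=[H^1,H^3]_{1/2}$ gives $\int_0^{T_2}\|\partial_t\widetilde\phi\|_{H^2}\,\mathrm ds\leq\kappa\,T_2^{3/4}=\kappa^{-11}\leq1$, so the Gronwall exponentials in \eqref{uniform-2'}, \eqref{uniform3} are $\leq e^{C}$, and one re-reads \eqref{p_t-g}, \eqref{p_t-f} term by term the same way). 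Since $\kappa^{16}T_2=1$, each such product is $\leq C$. For the $\mathbf H^2$-bound on $\boldsymbol v$ delivered by Lemma \ref{regularity-theory} I need the \emph{time-uniform} $\mathbf L^2$-norm of the source $\widetilde{\boldsymbol g}-\widehat\rho(\widetilde\phi+\psi)\partial_t\boldsymbol v$: the part $\widehat\rho(\widetilde\phi+\psi)\partial_t\boldsymbol v$ is controlled by the bound on $\partial_t\boldsymbol v$ in $L^\infty(0,T_2;\mathbf L^2_\sigma)$ just obtained, while $\|\widetilde{\boldsymbol g}\|_{L^\infty(0,T_2;\mathbf L^2)}\leq C$ follows by interpolating $\mathbf L^2=[(\mathbf H^1_\sigma)',\mathbf H^1]_{1/2}$ between the $t^{1/2}$-Hölder continuity supplied by $\partial_t\widetilde{\boldsymbol g}\in L^2(0,T_2;(\mathbf H^1_\sigma)')$ and the $L^\infty(0,T_2;\mathbf H^1)$-bound, giving $\|\widetilde{\boldsymbol g}(t)\|_{\mathbf L^2}\leq\|\widetilde{\boldsymbol g}(0)\|_{\mathbf L^2}+C\,T_2^{1/4}\kappa^{N/2}\leq C$.

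Assembling all of this into \eqref{uniform-2'}--\eqref{uniform3-3} and \eqref{0409-uni2}--\eqref{0429-uni2}, and invoking the elliptic regularity for $-\Delta_D$ and for the stationary Stokes-type system (Lemma \ref{regularity-theory}, $k=0$, whose constant is now $\kappa$-free by the first step), bounds each of $\|\boldsymbol v\|_{L^\infty(0,T_2;\mathbf H^2_\sigma)}$, $\|\partial_t\boldsymbol v\|_{L^\infty(0,T_2;\mathbf L^2_\sigma)}$, $\|\partial_t\boldsymbol v\|_{L^2(0,T_2;\mathbf H^1_\sigma)}$, $\|\phi\|_{L^\infty(0,T_2;H^5)}$, $\|\partial_t\phi\|_{L^\infty(0,T_2;H^1)}$ and $\|\partial_t\phi\|_{L^2(0,T_2;H^3)}$ by a data-constant, hence $\|\mathcal S(\widetilde{\boldsymbol v},\widetilde\phi)\|_{\mathcal V_{T_2}}^2\leq C_\ast$, and the choice $\kappa:=\max\{1,\sqrt{C_\ast}\}$, $T_2:=\kappa^{-16}$ finishes the proof. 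I expect the genuinely delicate part to be precisely this bookkeeping: one must verify that in every a priori estimate each occurrence of a ``top'' norm of the iterate --- $\|\widetilde\phi\|_{L^\infty H^5}$, $\|\widetilde{\boldsymbol v}\|_{L^\infty\mathbf H^2}$, $\|\partial_t\widetilde\phi\|_{L^2 H^3}$, $\|\partial_t\widetilde{\boldsymbol v}\|_{L^2\mathbf H^1}$ --- either sits inside an $L^p_t$-norm with $p<\infty$ (so Hölder in time produces a positive power of $T_2$) or is multiplied by $T_2$, so that it can be absorbed via $\kappa^{16}T_2=1$; the borderline contributions in $\widetilde{\boldsymbol g}$, $\partial_t\widetilde{\boldsymbol g}$ and $\partial_t\widetilde f$ that look merely linear in such a top norm must be handled with the sharper product/interpolation estimates so that each still retains a $T_2$-power, and the exponent $16$ is simply the smallest convenient value making all these exchanges hold simultaneously (the constraint $\kappa\geq1$ being what legitimizes writing $\kappa^{-j}\leq1$ throughout).
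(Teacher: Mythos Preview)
Your overall plan coincides with the paper's: feed the a~priori inequalities \eqref{uniform-2'}, \eqref{uniform-3'}, \eqref{0429-eq1} with the source bounds from Lemma~\ref{regularity}, then use $\kappa^{16}T_2=1$ to make every $\kappa$-power harmless. Your observation that $\|\widetilde\phi(t)\|_{H^3}\leq\|\phi_0\|_{H^3}+\kappa^{-7}\leq C$ keeps the $C^1(\overline\Omega)$-norm of $\widehat\nu(\widetilde\phi+\psi)$ bounded, which is exactly how the paper makes the Stokes-regularity constant in \eqref{0218-3} $\kappa$-free.

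There is, however, a genuine gap in your handling of $\|\widetilde{\boldsymbol g}\|_{L^\infty(0,T_2;\mathbf L^2)}$. The interpolation $\mathbf L^2=[(\mathbf H^1_\sigma)',\mathbf H^1]_{1/2}$ is \emph{false}: for any nontrivial $p$ one has $\|\nabla p\|_\sharp=0$ while $\|\nabla p\|_{\mathbf L^2}>0$, so no inequality of the form $\|\boldsymbol f\|_{\mathbf L^2}\leq C\|\boldsymbol f\|_\sharp^{1/2}\|\boldsymbol f\|_{\mathbf H^1}^{1/2}$ can hold. This matters because the full $\mathbf L^2$-norm of $\widetilde{\boldsymbol g}(t)$ (not just its Leray projection) is what enters the stationary estimate \eqref{0218-3} for $\|\boldsymbol v(t)\|_{\mathbf H^2}$. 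The paper avoids interpolation altogether: it expands $\|\widetilde{\boldsymbol g}(t)\|_{\mathbf L^2}^2$ term by term and writes each squared piece as its initial value plus the time integral of its derivative (the long computation \eqref{g-L2}--\eqref{0614-1}); the delicate contribution $\int_0^t\int_\Omega[(\nabla\Delta\partial_t\widetilde\phi\cdot\nabla)\widetilde{\boldsymbol v}]\cdot[(\nabla\Delta(\widetilde\phi+\psi)\cdot\nabla)\widetilde{\boldsymbol v}]\,\mathrm dx\,\mathrm ds$ is then handled by one more duality, yielding $\|\widetilde{\boldsymbol g}\|_{L^\infty(0,T;\mathbf L^2)}^2\leq C(1+\kappa^{12}T)$. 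If you want to rescue your shortcut, you would first have to upgrade the bound on $\partial_t\widetilde{\boldsymbol g}$ from $(\mathbf H^1_\sigma)'$ to the full $(\mathbf H_0^1)'$ (the duality estimates in the proof of Lemma~\ref{regularity} only use $\boldsymbol w|_\Gamma=\boldsymbol 0$ and $\mathrm{div}\,\widetilde{\boldsymbol v}=0$, never $\mathrm{div}\,\boldsymbol w=0$, so this is feasible) and then justify the corresponding interpolation carefully.
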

%
	%
\begin{proof}
First of all, by the definition of $\widetilde{f}$ and $\widetilde{\boldsymbol{g}}$, we see that
\begin{align}
	\|\widetilde{f}(0)\|_{H^1}&\leq \|\Delta (\widehat{\Psi}'(\phi_0+\psi)-\widehat{\Psi}'(\psi))\|_{H^1}+\|\boldsymbol{v}_0\cdot\nabla(\phi_0+\psi)\|_{H^1}\leq C,\label{f0}\\
	\|\widetilde{\boldsymbol{g}}(0)\|_{\mathbf{L}^2}&\leq \|\Delta\phi_0 \nabla(\phi_0+\psi)\|_{\mathbf{L}^2}+\|\Delta\psi\nabla\phi_0\|_{\mathbf{L}^2}+g\varrho_1\|\phi_0\|_{L^2}\notag\\
	&\quad+\|\widehat{\rho}(\phi_0+\psi)({\boldsymbol{v}}_0\cdot\nabla)\boldsymbol{ v}_0\|_{\mathbf{L}^2}+\varrho_1\|(\nabla\mu(\phi_0+\psi)\cdot\nabla){\boldsymbol{v}}_0\|_{\mathbf{L}^2}\leq C.\label{g0}
\end{align}
%
%
 Now, we derive some estimates for $\boldsymbol{v}$. First of all, by \eqref{uniform-2'}, \eqref{phi_t0'} and \eqref{g0}, there holds{
 \begin{align}
 &\|\boldsymbol{ v}\|_{L^\infty(0,T;\mathbf{H}_\sigma^1(\Omega))}^2+\|\partial_t\boldsymbol{ v}\|_{L^2(0,T;\mathbf{L}_\sigma^2(\Omega))}^2\notag\\
 &\quad\leq C e^{C\displaystyle\int_0^T \|\partial_t\widetilde{\phi}(s)\|_{H^2}\,\mathrm{d}s}(1+T\|\widetilde{\boldsymbol{ g}}\|_{L^\infty(0,T;\mathbf{L}^2(\Omega))}^2)\notag\\
 &\quad\leq Ce^{C\kappa\sqrt{T}}(1+T\|\widetilde{\boldsymbol{ g}}\|_{L^\infty(0,T;\mathbf{L}^2(\Omega))}^2).\label{0613-1}
 \end{align}
}
By \eqref{uniform-3'}, \eqref{phi_t0'} and \eqref{g0}, there holds{
\begin{align}
	\|\partial_t \boldsymbol{v}(t)\|_{\mathbf{L}^2}^2&\leq C e^{C\displaystyle\int_0^T\|\partial_t\widetilde{\phi}(s)\|_{H^2}\,\mathrm{d}s}\left(1+\int_0^T (\|\partial_t\widetilde{\boldsymbol{g}}(s)\|_{\sharp}^2+\|\partial_t\widetilde{\phi}(s)\|_{H^2}^2\|\mathbb{D}\boldsymbol{v}(s)\|_{\mathbf{L}^2}^2)\,\mathrm{d}s\right)\notag\\
	&\leq C (1+\kappa^2\sqrt{T})e^{C\kappa\sqrt{ T}}\left(1+\int_0^T \|\partial_t\widetilde{\boldsymbol{g}}(s)\|_{\sharp}^2\,\mathrm{d}s+{T\|\widetilde{\boldsymbol{g}}\|_{L^\infty(0,T;\mathbf{L}^2(\Omega))}^2}\right),\label{0218-1}\\
	\int_0^T\|\partial_t \boldsymbol{v}(s)\|_{\mathbf{H}^1}^2\,\mathrm{d}s&\leq C\int_0^T\|\partial_t \widetilde{\phi}(s)\|_{H^2}\left\|\sqrt{\widehat{\rho}(\widetilde{\phi}(s)+\psi)}\partial_t \boldsymbol{v}(s)\right\|_{\mathbf{L}^2}^2\,\mathrm{d}s\notag\\
	&\quad +C\left(1+\int_0^T (\|\partial_t\widetilde{\boldsymbol{g}}(s)\|_{\sharp}^2+\|\partial_t\widetilde{\phi}(s)\|_{H^2}^2\|\mathbb{D}\boldsymbol{v}(s)\|_{\mathbf{L}^2}^2)\,\mathrm{d}s\right)\notag\\
	&\leq C\big(1+\kappa^2\sqrt{ T}\big)e^{C\kappa\sqrt{ T}}\left(1+\int_0^T \|\partial_t\widetilde{\boldsymbol{g}}(s)\|_{\sharp}^2\,\mathrm{d}s+{T\|\widetilde{\boldsymbol{g}}\|_{L^\infty(0,T;\mathbf{L}^2(\Omega))}^2}\right).\label{0218-2}
\end{align}
}By \eqref{weak-3}, \eqref{0613-1}, \eqref{0218-1} and Lemma \ref{regularity-theory}, we have
\begin{align}
	&\|\boldsymbol{v}(t)\|_{\mathbf{H}^2}^2+\|q(t)\|_{H^1}^2\notag\\[1mm]
    &\quad\leq C\left(\|\boldsymbol{v}(t)\|_{\mathbf{L}^2}^2+\|\partial_t\boldsymbol{v}(t)\|_{\mathbf{L}^2}^2+\|\widetilde{\boldsymbol{g}}(t)\|_{\mathbf{L}^2}^2\right)\notag\\
	&\quad\leq  C (1+\kappa^2\sqrt{T})e^{C\kappa\sqrt{ T}}\left(1+\int_0^T \|\partial_t\widetilde{\boldsymbol{g}}(s)\|_{\sharp}^2\,\mathrm{d}s+\|\widetilde{\boldsymbol{g}}\|_{L^\infty(0,T;\mathbf{L}^2(\Omega))}^2\right).\label{0218-3}
\end{align}
{Since 
\begin{align*}
    \|\widetilde{\phi}(t)\|_{H^3}^2&=\|\phi_0\|_{H^3}^2+\int_0^t \frac{\mathrm{d}}{\mathrm{d}s}\|\widetilde{\phi}(s)\|_{H^3}^2\,\mathrm{d}s\\
    &\leq \|\phi_0\|_{H^3}^2+2\sqrt{T}\|\widetilde{\phi}\|_{L^\infty(0,T;H^3(\Omega))}\|\partial_t\widetilde{\phi}\|_{L^2(0,T;H^3(\Omega))}\\
    &\leq C(1+\kappa^2\sqrt{T})\leq C,
\end{align*}
we see that $\|\widehat{\nu}(\widetilde{\phi}+\psi)\|_{L^\infty(0,T;C^1(\overline{\Omega}))}\leq C$, which infers that the constant in \eqref{0218-3} is also independent of $\kappa$.}
While for the estimate of $\phi$, similar to \eqref{0429-es4}, we see that
\begin{align}
	\frac{\mathrm{d}}{\mathrm{d}t}\|\nabla\partial_t\phi\|_{\mathbf{L}^2}^2+2\|\nabla\Delta\partial_t\phi\|_{\mathbf{L}^2}^2\leq 2\|\nabla\partial_t \widetilde{f}\|_{\mathbf{L}^2}\|\nabla \partial_t\phi\|_{\mathbf{L}^2},\label{0429-eq1}
\end{align}
which infers that
\begin{align}
	\|\nabla\partial_t \phi(t)\|_{\mathbf{L}^2}&\leq \|\nabla\partial_t\phi(0)\|_{\mathbf{L}^2}+\int_0^t \|\nabla\partial_t \widetilde{f}(s)\|_{\mathbf{L}^2}\,\mathrm{d}s\leq C\Big(1+\int_0^t \|\nabla\partial_t \widetilde{ f}(s)\|_{\mathbf{L}^2}\,\mathrm{d}s\Big).\notag
\end{align}
Using Poincar\'e's inequality \eqref{Poincare} and the inequality above, we see that
\begin{align}
	\|\partial_t \phi(t)\|_{H^1}&\leq C\left(1+\int_0^t\|\partial_t\widetilde{f}(s)\|_{H^1}\,\mathrm{d}s\right)\leq C\left(1+\sqrt{T}\|\partial_t\widetilde{f}\|_{L^2(0,T;H^1(\Omega))}\right).\label{phi_t}
\end{align}
Integrating \eqref{0429-eq1} over $(0,T)$ and using \eqref{phi_t0'}, \eqref{phi_t}, it holds
\begin{align}
	\int_0^{T} \|\nabla\Delta\partial_t\phi(s)\|_{\mathbf{L}^2}^2\,\mathrm{d}s&\leq\frac{1}{2}\|\nabla\partial_t\phi(0)\|_{\mathbf{L}^2}^2+\int_0^{T} \|\partial_t\widetilde{f}(s)\|_{H^1}\|\partial_t \phi(s)\|_{H^1}\,\mathrm{d}s\notag\\
	&\leq \frac{1}{2}\|\nabla\partial_t\phi(0)\|_{\mathbf{L}^2}^2+\sqrt{T}\|\partial_t\phi\|_{L^\infty(0,T;H^1(\Omega))}\|\partial_t \widetilde{f}\|_{L^2(0,T;H^1(\Omega))}\notag\\
	&\leq C(1+T\|\partial_t\widetilde{f}\|_{L^2(0,T;H^1(\Omega))}^2),\notag
\end{align}
which, together with Poincar\'e's inequality \eqref{Poincare} and \eqref{phi_t}, implies that
\begin{align}
	\|\partial_t\phi\|_{L^2(0,T;H^3(\Omega))}^2\leq  C(1+T\|\partial_t \widetilde{f}\|_{L^2(0,T;H^1(\Omega))}^2).\label{phi_t_H3}
\end{align}
By \eqref{linear-2}, \eqref{f0}, \eqref{phi_t} and the elliptic regularity theory (see, e.g., \cite[Lemma 5.8]{NASII04}), it holds
\begin{align}
	\|\phi(t)\|_{H^5}^2&\leq C(\|\partial_t\phi(t)\|_{H^1}^2+\|\widetilde{f}(t)\|_{H^1}^2)\notag\\
	&\leq C(\|\phi_0\|_{H^5}^2+\|\widetilde{f}(0)\|_{H^1}^2+T\|\partial_t \widetilde{f}\|_{L^2(0,T;H^1(\Omega))}^2+\|\widetilde{f}(t)\|_{H^1}^2)\notag\\
	&\leq C\Big(\|\phi_0\|_{H^5}^2+\|\widetilde{f}(0)\|_{H^1}^2+T\|\partial_t \widetilde{f}\|_{L^2(0,T;H^1(\Omega))}^2+\Big(\int_0^t\|\partial_t\widetilde{f}(s)\|_{H^1}\,\mathrm{d}s+\|\widetilde{f}(0)\|_{H^1}\Big)^2\Big)\notag\\
	&\leq  C\Big(1+T\|\partial_t \widetilde{f}\|_{L^2(0,T;H^1(\Omega))}^2\Big).\label{phi-H5}
\end{align}
By \eqref{g-L-infty} and \eqref{p_t-f}, we see that
\begin{align}
	\|\widetilde{\boldsymbol{g}}\|_{L^2(0,T;\mathbf{H}^1(\Omega))}^2&\leq C\kappa^{12} T,\label{g-H1}\\
	\|\partial_t\widetilde{f}\|_{L^2(0,T;H^1(\Omega))}^2&\leq C\kappa^{10},\label{f_t-H1}
\end{align} 
and by \eqref{p_t-g}, we obtain
\begin{align}
	\int_0^T\|\partial_t\widetilde{\boldsymbol{g}}\|_\sharp^2\,\mathrm{d}t&\leq C\kappa^{12} T+C\kappa^{\frac{5}{2}}\int_0^T\|\partial_t\widetilde{\phi}\|_{H^3}^{\frac{3}{2}}\,\mathrm{d}t\notag\\
	&\leq  C\kappa^{12} T+C\kappa^{\frac{5}{2}}\left(\int_0^T \|\partial_t\widetilde{\phi}\|_{H^3}^2\,\mathrm{d}t\right)^{\frac{3}{4}}T^{\frac{1}{4}}\notag\\
	&\leq C\kappa^{12} T+\kappa^4 T^{\frac{1}{4}}.\label{g_t}
\end{align}
Concerning the estimate $\|\widetilde{\boldsymbol{g}}\|_{L^\infty(0,T;\mathbf{L}^2(\Omega))}$, by the definition of $\widetilde{\boldsymbol{ g}}$, we have
\begin{align}
\|\widetilde{\boldsymbol{g}}\|_{\mathbf{L}^2}^2&\leq C\Big( \|\Delta\widetilde{\phi}\nabla(\widetilde{\phi}+\psi)\|_{\mathbf{L}^2}^2+\|\Delta\psi\nabla\widetilde{\phi} \|_{\mathbf{L}^2}^2+\|\widetilde{\phi}\|_{L^2}^2+\|[(\widehat{\rho}(\widetilde{\phi}+\psi)\widetilde{\boldsymbol{v}}-\varrho_1\nabla\mu(\widetilde{\phi}+\psi))\cdot\nabla]\widetilde{\boldsymbol{v}}\|_{\mathbf{L}^2}^2\Big)\notag\\
&\leq C\Big(1+\|\widetilde{\phi}\|_{H^3}^4+\|(\widetilde{\boldsymbol{v}}\cdot\nabla)\widetilde{\boldsymbol{v}}\|_{\mathbf{L}^2}^2\Big)+C\|\widehat{\Psi}''(\widetilde{\phi}+\psi)(\nabla(\widetilde{\phi}+\psi)\cdot\nabla)\widetilde{\boldsymbol{v}}\|_{\mathbf{L}^2}^2+{C\|(\nabla\Delta(\widetilde{\phi}+\psi)\cdot\nabla)\widetilde{\boldsymbol{v}}\|_{\mathbf{L}^2}^2}\notag\\
&\leq C+C\Big(\int_0^t\frac{\mathrm{d}}{\mathrm{d}s}\|\widetilde{\phi}\|_{H^3}^2\,\mathrm{d}s+\|\phi_0\|_{H^3}^2\Big)^2+C\int_0^t\frac{\mathrm{d}}{\mathrm{d}s}\|(\widetilde{\boldsymbol{v}}\cdot\nabla)\widetilde{\boldsymbol{v}}\|_{\mathbf{L}^2}^2\,\mathrm{d}s\notag\\
&\quad+C(\|\widetilde{\phi}\|_{H^3}^{10}+1)\|\widetilde{\boldsymbol{v}}\|_{\mathbf{H}^1}^2+C\int_0^t\frac{\mathrm{d}}{\mathrm{d}s}\|(\nabla\Delta(\widetilde{\phi}+\psi)\cdot\nabla)\widetilde{\boldsymbol{v}}\|_{\mathbf{L}^2}^2\,\mathrm{d}s\notag\\
&\leq C+C\Big(\int_0^t \|\partial_t\widetilde{\phi}\|_{H^3}\|\widetilde{\phi}\|_{H^3}\,\mathrm{d}s+1\Big)^2\notag\\
&\quad+C\int_0^t \Big(\|(\partial_t\widetilde{\boldsymbol{v}}\cdot\nabla)\widetilde{\boldsymbol{v}}\|_{\mathbf{L}^2}+\|(\widetilde{\boldsymbol{v}}\cdot\nabla)\partial_t\widetilde{\boldsymbol{v}}\|_{\mathbf{L}^2}\Big)\|\widetilde{\boldsymbol{v}}\|_{\mathbf{L}^4}\|\nabla\widetilde{\boldsymbol{v}}\|_{\mathbf{L}^4}\,\mathrm{d}s\notag\\
&\quad+C\Big(\Big(\int_0^t \frac{\mathrm{d}}{\mathrm{d}s}\|\widetilde{\phi}\|_{H^3}^2\,\mathrm{d}s+\|\phi_0\|_{H^3}^2\Big)^5+1\Big)\Big(\int_0^t\frac{\mathrm{d}}{\mathrm{d}s}\|\widetilde{\boldsymbol{v}}\|_{\mathbf{H}^1}^2\,\mathrm{d}s+\|\boldsymbol{v}_0\|_{\mathbf{H}^1}^2\Big)\notag\\
&\quad+C\int_0^t \|(\nabla\Delta(\widetilde{\phi}+\psi)\cdot\nabla)\partial_t\widetilde{\boldsymbol{v}}\|_{\mathbf{L}^2}\|(\nabla\Delta(\widetilde{\phi}+\psi)\cdot\nabla)\widetilde{\boldsymbol{v}}\|_{\mathbf{L}^2}\,\mathrm{d}s\notag\\
&\quad+{C\int_0^t \int_\Omega
[(\nabla\Delta\partial_t\widetilde{\phi}\cdot\nabla)\widetilde{\boldsymbol{v}}][(\nabla\Delta(\widetilde{\phi}+\psi)\cdot\nabla)\widetilde{\boldsymbol{v}}]\,\mathrm{d}x\,\mathrm{d}s}\notag\\
&\leq C+C\Big(1+\int_0^t \|\partial_t\widetilde{\phi}\|_{H^3}^2\,\mathrm{d}s\int_0^t\|\widetilde{\phi}\|_{H^3}^2\,\mathrm{d}s\Big)+C\int_0^t\Big(\|\partial_t\widetilde{\boldsymbol{v}}\|_{\mathbf{H}^1}\|\widetilde{\boldsymbol{v}}\|_{\mathbf{H}^2}^3\,\mathrm{d}s\Big)\notag\\
&\quad+C\Big(1+\Big(\int_0^t\|\partial_t\widetilde{\phi}\|_{H^3}\|\widetilde{\phi}\|_{H^3}\,\mathrm{d}s\Big)^5\Big)\Big(\int_0^t\|\partial_t\widetilde{\boldsymbol{v}}\|_{\mathbf{H}^1}\|\widetilde{\boldsymbol{v}}\|_{\mathbf{H}^1}\,\mathrm{d}s+1\Big)\notag\\
&\quad+\int_0^t(\|\widetilde{\phi}\|_{H^5}^2+1)\|\partial_t\widetilde{\boldsymbol{v}}\|_{\mathbf{H}^1}\|\widetilde{\boldsymbol{v}}\|_{\mathbf{H}^1}\,\mathrm{d}s+{C\int_0^t \int_\Omega
[(\nabla\Delta\partial_t\widetilde{\phi}\cdot\nabla)\widetilde{\boldsymbol{v}}][(\nabla\Delta(\widetilde{\phi}+\psi)\cdot\nabla)\widetilde{\boldsymbol{v}}]\,\mathrm{d}x\,\mathrm{d}s}\notag\\
&\leq C(1+\kappa^{12} T)+{C\int_0^t \int_\Omega
[(\nabla\Delta\partial_t\widetilde{\phi}\cdot\nabla)\widetilde{\boldsymbol{v}}][(\nabla\Delta(\widetilde{\phi}+\psi)\cdot\nabla)\widetilde{\boldsymbol{v}}]\,\mathrm{d}x\,\mathrm{d}s}.\label{g-L2}
\end{align}
For the last term in the last line of \eqref{g-L2}, we have{
\begin{align}
    &\int_0^t \int_\Omega
[(\nabla\Delta\partial_t\widetilde{\phi}\cdot\nabla)\widetilde{\boldsymbol{v}}][(\nabla\Delta(\widetilde{\phi}+\psi)\cdot\nabla)\widetilde{\boldsymbol{v}}]\,\mathrm{d}x\,\mathrm{d}s\notag\\
&\quad=\int_0^t \langle(\nabla\Delta\partial_t\widetilde{\phi}\cdot\nabla)\widetilde{\boldsymbol{v}},(\nabla\Delta(\widetilde{\phi}+\psi)\cdot\nabla)\widetilde{\boldsymbol{v}}\rangle_{(H^1(\Omega))',H^1(\Omega)}\,\mathrm{d}s\notag\\
&\leq C\int_0^t\|(\nabla\Delta\partial_t\widetilde{\phi}\cdot\nabla)\widetilde{\boldsymbol{v}}\|_{(H^1(\Omega))'} \|(\nabla\Delta(\widetilde{\phi}+\psi)\cdot\nabla)\widetilde{\boldsymbol{v}}\|_{H^1}\,\mathrm{d}s\notag\\
&\leq C\int_0^t \|(\nabla\Delta\partial_t\widetilde{\phi}\cdot\nabla)\widetilde{\boldsymbol{v}}\|_{(H^1(\Omega))'} \|\nabla\Delta(\widetilde{\phi}+\psi)\|_{H^2}\|\nabla\widetilde{\boldsymbol{v}}\|_{\mathbf{H}^1}\,\mathrm{d}s\notag\\
&\leq C\kappa^2\int_0^t \|(\nabla\Delta\partial_t\widetilde{\phi}\cdot\nabla)\widetilde{\boldsymbol{v}}\|_{(H^1(\Omega))'}\,\mathrm{d}s,\notag
\end{align}
and
\begin{align*}
    &\int_0^t \|(\nabla\Delta\partial_t\widetilde{\phi}\cdot\nabla)\widetilde{\boldsymbol{v}}\|_{(H^1(\Omega))'}\,\mathrm{d}s\\
    &\quad=\sup_{\|\boldsymbol{w}\|_{\mathbf{H}^1}=1}\int_0^t \int_\Omega(\nabla\Delta\partial_t\widetilde{\phi}\cdot\nabla)\widetilde{\boldsymbol{v}}\cdot\boldsymbol{w}\,\mathrm{d}x\,\mathrm{d}s\\
    &\quad\leq \sup_{\|\boldsymbol{w}\|_{\mathbf{H}^1}=1}\int_0^t \|\nabla\Delta\partial_t\widetilde{\phi}\|_{\mathbf{L}^2}\|\nabla\widetilde{\boldsymbol{v}}\|_{\mathbf{L}^3}\|\boldsymbol{w}\|_{\mathbf{L}^6}\,\mathrm{d}s\\
    &\quad\leq C\int_0^t \|\partial_t\widetilde{\phi}\|_{H^3}\|\widetilde{\boldsymbol{v}}\|_{\mathbf{H}^2}\,\mathrm{d}s\\
    &\quad\leq C\kappa^2\sqrt{T}.
\end{align*}
Collecting \eqref{g-L2} with the two estimates above, we obtain
\begin{align}
    \|\widetilde{\boldsymbol{g}}\|_{L^\infty(0,T;\mathbf{L}^2(\Omega))}\leq C(1+\kappa^{12}T).\label{0614-1}
\end{align}
}%
By \eqref{0218-1}, \eqref{0218-2}, \eqref{0218-3}, \eqref{phi_t}, \eqref{phi_t_H3}, \eqref{phi-H5}, \eqref{g-H1}, \eqref{f_t-H1}, \eqref{g_t} and \eqref{0614-1}, we can conclude that
\begin{align*}
      \|\boldsymbol{v}\|_{L^\infty(0,T;\mathbf{H}_\sigma^2(\Omega))}^2&\leq  \widetilde{C} (1+\kappa^2 \sqrt{T})e^{\kappa \sqrt{T}}(1+\kappa^{16}T),\\
	\|\partial_t\boldsymbol{v}\|_{L^\infty(0,T;\mathbf{L}^2(\Omega))}^2&\leq \widetilde{C}(1+\kappa^2\sqrt{T})e^{\kappa \sqrt{T}}(1+\kappa^{16}T),\\
	\|\partial_t\boldsymbol{v}\|_{L^2(0,T;\mathbf{H}^1(\Omega))}^2&\leq \widetilde{C}(1+\kappa^2 \sqrt{T}) e^{\kappa \sqrt{T}}(1+\kappa^{16}T),\\
	\|\phi\|_{L^\infty(0,T;H^5(\Omega))}^2&\leq \widetilde{C}(1+\kappa^{10} T),\\
	\|\partial_t\phi\|_{L^\infty(0,T;H^1(\Omega))}^2&\leq \widetilde{C}(1+\kappa^{10} T),\\
	\|\partial_t\phi\|_{L^2(0,T;H^3(\Omega))}^2&\leq \widetilde{C}(1+\kappa^{10} T).
\end{align*}
Since $\kappa^{16} T_2=1$, we set $\kappa=\max\{1,4\widetilde{C} e\}$, it holds
\begin{align}
	\|(\boldsymbol{v},\phi)\|_{\mathcal{V}_{T_2}}\leq \kappa\quad\text{with }\ T_2=\kappa^{-16}.\notag
\end{align}
Hence, we can conclude \eqref{bounded} and complete the proof of Lemma \ref{boundedness-1}.
\end{proof}

In view of Lemmas \ref{solve-linear1} and \ref{solve-linear2}, by selecting $(\boldsymbol{ v}^0,\phi^0)=(\boldsymbol{ v}_0,\phi_0)$, we can construct a solution sequence $\{(\boldsymbol{v}^k,\phi^k,q^k)\}_{k\in \mathbb{Z}^+}$ satisfying
\begin{align}
	\begin{cases}
		\widehat{\rho}(\phi^{k-1}+\psi) \partial_t\boldsymbol{v}^k- { \text{div}(2\widehat{\nu}(\phi^{k-1}+\psi)\mathbb{D}\boldsymbol{v}^k)}
		+\nabla q ^k
		=\boldsymbol{g}^k,&\text{a.e. in }\Omega\times(0,T),\\[1mm]
		\mathrm{div}\,\boldsymbol{v}^k =0,&\text{a.e. in }\Omega\times(0,T), \\[1mm]
			\partial_t \phi^k+ \Delta^2 \phi^k=f^k,&\text{a.e. in }\Omega\times(0,T),\\[1mm]
		\boldsymbol{v}^k=\boldsymbol{0},\ \phi^k=\Delta\phi^k=0,&\text{a.e. on }\Gamma\times(0,T),\\[1mm]
       \boldsymbol{v}^k|_{t=0}=\boldsymbol{v}_0,\quad\phi^k|_{t=0}=\phi_0,&\text{a.e. in }\Omega,
	\end{cases}\label{it1}
\end{align}
where
\begin{align*}
	\boldsymbol{g}^k&=-
	\Delta \phi^{k-1}\nabla(\phi^{k-1}+ \psi)- \Delta\psi  \nabla
	{\phi}^{k-1} -g \varrho_1 { \phi}^{k-1} \boldsymbol{e}_3\\
	&\quad-  \widehat{\rho}({\phi}^{k-1}+\psi)( {\boldsymbol{v}}^{k-1}\cdot\nabla)\boldsymbol{ v}^{k-1}
	+\varrho_1 ( \nabla \mu({\phi}^{k-1}+\psi)\cdot \nabla) {\boldsymbol{v}}^{k-1},\\
	f^k&=\Delta(\widehat{\Psi}'({\phi}^{k-1}+\psi)-\widehat{\Psi}'(\psi))
	- { \boldsymbol{v} }^{k-1}\cdot\nabla ({\phi}^{k-1}+\psi).
\end{align*}
Moreover, by virtue of \eqref{bounded}, there exists a $T_2\in(0,1)$ such that the solution sequence $\{(\boldsymbol{v}^k,\phi^k,q^k)\}_{k\in\mathbb{Z}^+}$ satisfies the following uniform estimate
\begin{align}
	\|(\boldsymbol{v}^k,\phi^k)\|_{\mathcal{V}_{T_2}}\leq C\quad\text{for all }k\in\mathbb{Z}^+.\label{uni}
\end{align}
In order to take the limit $k\to+\infty$ in \eqref{it1}, we shall show $\{(\boldsymbol{v}^k,\phi^k)\}_{k\in\mathbb{Z}^+}$ is a Cauchy sequence. Precisely, we have the following lemma.
\begin{lem}
	\label{Cauchy-sequence}
There exists a small time $T_1\in(0,T_2]$ such that 
\[\|(\boldsymbol{v}^{k+1}-\boldsymbol{v}^k,\phi^{k+1}-\phi^k)\|_{\mathcal{V}^\ast_{T_1}}\leq \frac{1}{2}\|(\boldsymbol{v}^{k}-\boldsymbol{v}^{k-1},{\phi}^{k}-\phi^{k-1})\|_{\mathcal{V}_{T_1}^\ast},\]
where the function space $\mathcal{V}_T^\ast$ equipped with the norm 
\begin{align*}
	\|(\boldsymbol{ v},\phi)\|_{\mathcal{V}_T^\ast}^2&=\|\boldsymbol{ v}\|_{L^\infty(0,T;\mathbf{H}_\sigma^1(\Omega))}^2+\|\boldsymbol{ v}\|_{L^2(0,T;\mathbf{H}_\sigma^2(\Omega))}^2+\|\partial_t\boldsymbol{ v}\|_{L^2(0,T;\mathbf{L}_\sigma^2(\Omega))}^2\\
	&\quad+\|\phi\|_{L^\infty(0,T;H^2(\Omega))}^2+\|\phi\|_{L^2(0,T;H^4(\Omega))}^2+\|\partial_t\phi\|_{L^2(0,T;L^2(\Omega))}^2.
\end{align*}
In particular, $\{(\boldsymbol{v}^k,\phi^k)\}_{k\geq1}$ is a Cauchy sequence in $\mathcal{V}_{T_1}^\ast$.
\end{lem}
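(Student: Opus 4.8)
The plan is to derive a contraction estimate for the \emph{differences} of consecutive iterates. Fix $k\geq 1$ and set $\boldsymbol{V}=\boldsymbol{v}^{k+1}-\boldsymbol{v}^k$, $\Phi=\phi^{k+1}-\phi^k$, $Q=q^{k+1}-q^k$, and similarly $\boldsymbol{V}^-=\boldsymbol{v}^k-\boldsymbol{v}^{k-1}$, $\Phi^-=\phi^k-\phi^{k-1}$. Subtracting the equations \eqref{it1} for index $k$ from those for index $k+1$, I obtain a coupled linear system for $(\boldsymbol{V},\Phi,Q)$: a Navier--Stokes-type equation
\[
\widehat{\rho}(\phi^k+\psi)\partial_t\boldsymbol{V}-\mathrm{div}(2\widehat{\nu}(\phi^k+\psi)\mathbb{D}\boldsymbol{V})+\nabla Q = \boldsymbol{G},\qquad \mathrm{div}\,\boldsymbol{V}=0,
\]
where $\boldsymbol{G}$ collects (i) the commutator terms $-(\widehat{\rho}(\phi^k+\psi)-\widehat{\rho}(\phi^{k-1}+\psi))\partial_t\boldsymbol{v}^k$ and $\mathrm{div}(2(\widehat{\nu}(\phi^k+\psi)-\widehat{\nu}(\phi^{k-1}+\psi))\mathbb{D}\boldsymbol{v}^k)$ coming from the variable coefficients, and (ii) the difference $\boldsymbol{g}^{k+1}-\boldsymbol{g}^k$; together with a fourth-order parabolic equation $\partial_t\Phi+\Delta^2\Phi=f^{k+1}-f^k=:F$, with homogeneous boundary conditions and \emph{zero} initial data for both $\boldsymbol{V}$ and $\Phi$.

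The core of the argument is the energy method applied to this difference system on a time interval $(0,T_1)$ with $T_1\le T_2$ to be chosen. For the Navier--Stokes part I would test with $\boldsymbol{V}$ itself (not $\partial_t\boldsymbol{V}$, since we only need the $\mathcal{V}_T^\ast$-norm), using the lower bounds $\widehat{\rho}\ge\rho_\ast$, $\widehat{\nu}\ge\nu_\ast$, the Korn inequality \eqref{Korn} and the Poincar\'e inequality \eqref{Poincare}, to get
\[
\frac{\mathrm d}{\mathrm dt}\|\sqrt{\widehat{\rho}(\phi^k+\psi)}\,\boldsymbol{V}\|_{\mathbf L^2}^2 + c\|\boldsymbol{V}\|_{\mathbf H^1_\sigma}^2 \le C\|\boldsymbol{G}\|_\sharp^2 + (\text{lower-order});
\]
then a second test by $\partial_t\boldsymbol{V}$ combined with the stationary Stokes regularity Lemma \ref{regularity-theory} upgrades this to control of $\|\boldsymbol{V}\|_{L^2(0,T_1;\mathbf H^2_\sigma)}$ and $\|\partial_t\boldsymbol{V}\|_{L^2(0,T_1;\mathbf L^2_\sigma)}$. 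For the biharmonic part I test \eqref{linear-2-difference}-type equation for $\Phi$ by $\Delta^2\Phi$ (or successively by $\Phi$, $-\Delta\Phi$, $\partial_t\Phi$) to bound $\|\Phi\|_{L^\infty(0,T_1;H^2)}^2+\|\Phi\|_{L^2(0,T_1;H^4)}^2+\|\partial_t\Phi\|_{L^2(0,T_1;L^2)}^2$ by $C\|F\|_{L^2(0,T_1;L^2)}^2$. The essential point is then to estimate the right-hand sides: using the Lipschitz bounds on $\widehat{\rho}$, $\widehat{\nu}$, $\widehat{\Psi}'$ and their derivatives from \eqref{app_rho}, the product estimate \eqref{product}, the uniform bound \eqref{uni} on $\|(\boldsymbol{v}^j,\phi^j)\|_{\mathcal V_{T_2}}\le C$, and Sobolev embeddings in three dimensions, one shows
\[
\|\boldsymbol{G}\|_\sharp + \|F\|_{L^2} \le C\big(\|\boldsymbol{V}^-\|_{\mathbf H^1_\sigma} + \|\Phi^-\|_{H^4} + \|\partial_t\Phi^-\|_{L^2} + \cdots\big),
\]
with a constant $C$ depending only on the data and on $\kappa$. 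Integrating in time and applying Gronwall over $(0,T_1)$, the zero initial data kill the boundary terms and yield
\[
\|(\boldsymbol{V},\Phi)\|_{\mathcal V_{T_1}^\ast}^2 \le C(\kappa)\,T_1^\alpha\,\|(\boldsymbol{V}^-,\Phi^-)\|_{\mathcal V_{T_1}^\ast}^2
\]
for some $\alpha>0$ (a small power of $T_1$ extracted from the time integration of the source terms); choosing $T_1\in(0,T_2]$ so small that $C(\kappa)T_1^\alpha\le 1/4$ gives the claimed factor $1/2$. It then follows that $\{(\boldsymbol{v}^k,\phi^k)\}_{k\ge1}$ is Cauchy in the Banach space $\mathcal V_{T_1}^\ast$.

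The main obstacle I anticipate is the careful bookkeeping of the nonlinear difference terms, particularly the third-order term $\varrho_1(\nabla\mu(\phi+\psi)\cdot\nabla)\boldsymbol{v}$ in $\boldsymbol{g}^k$, whose difference involves $\nabla\Delta\Phi^-$ paired against $\nabla\boldsymbol{v}^k$; since $\mathcal V_{T_1}^\ast$ only controls $\phi$ in $L^2(0,T_1;H^4)$ and not $H^5$, one must integrate by parts to move a derivative onto the (smooth, uniformly bounded) test function or onto $\boldsymbol{v}^k$, and use the dual norm $\|\cdot\|_\sharp$ on the velocity side, exactly as in the proof of Lemma \ref{regularity}. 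A secondary subtlety is that the coefficient commutators $\widehat{\rho}(\phi^k+\psi)-\widehat{\rho}(\phi^{k-1}+\psi)$ multiply $\partial_t\boldsymbol{v}^k$, which lies only in $L^\infty(0,T_1;\mathbf L^2_\sigma)$; this is handled by estimating $\|\widehat{\rho}(\phi^k+\psi)-\widehat{\rho}(\phi^{k-1}+\psi)\|_{L^\infty}\le \widetilde C_1\|\Phi^-\|_{L^\infty}\le C\|\Phi^-\|_{H^2}$ and pulling out $\|\partial_t\boldsymbol{v}^k\|_{\mathbf L^2}\le\kappa$, so the time integral again produces the decisive power of $T_1$. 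Everything else is a routine, if lengthy, application of Hölder, Sobolev, Gronwall and the linear theory already established in Lemmas \ref{solve-linear1} and \ref{solve-linear2}.
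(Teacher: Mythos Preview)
Your overall strategy is correct and matches the paper's: subtract consecutive iterates, use the linear energy estimates (test by $\partial_t\boldsymbol{V}$) together with the Stokes regularity Lemma~\ref{regularity-theory} for the velocity part, the standard biharmonic estimates for $\Phi$, and then bound the difference source terms $\boldsymbol{G}$, $F$ to extract a positive power of $T_1$. The zero initial data and the uniform bound \eqref{uni} are used exactly as you describe.

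There is, however, one point where your plan is slightly off. You identify the critical term $(\nabla\Delta\Phi^-\cdot\nabla)\boldsymbol{v}^{k-1}$ and propose to handle it by integration by parts and the dual norm $\|\cdot\|_\sharp$. But your own scheme requires a second test by $\partial_t\boldsymbol{V}$ to obtain the $L^\infty(0,T_1;\mathbf{H}^1_\sigma)$ and $L^2(0,T_1;\mathbf{L}^2_\sigma)$ control of $\boldsymbol{V}$ and $\partial_t\boldsymbol{V}$; that test yields $\int_\Omega\boldsymbol{G}\cdot\partial_t\boldsymbol{V}\,\mathrm{d}x$ on the right, which forces you to estimate $\boldsymbol{G}$ in $\mathbf{L}^2$, not merely in $(\mathbf{H}^1_\sigma)'$. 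Integration by parts is therefore not available at this stage. The paper instead estimates this term directly in $\mathbf{L}^2$ via the interpolation
\[
\|(\nabla\Delta\Phi^-\cdot\nabla)\boldsymbol{v}^{k-1}\|_{\mathbf{L}^2}\le \|\nabla\Delta\Phi^-\|_{\mathbf{L}^3}\|\nabla\boldsymbol{v}^{k-1}\|_{\mathbf{L}^6}\le C\|\Phi^-\|_{H^2}^{1/4}\|\Phi^-\|_{H^4}^{3/4}\,\|\boldsymbol{v}^{k-1}\|_{\mathbf{H}^2},
\]
and then H\"older in time produces the factor $T_1^{1/4}$. So the $\mathcal{V}_{T_1}^\ast$-control of $\Phi^-$ in $L^\infty(H^2)\cap L^2(H^4)$ is already enough without integration by parts; you simply need to interpolate rather than dualize. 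With this modification your argument closes exactly as the paper's, giving $\|(\boldsymbol{V},\Phi)\|_{\mathcal{V}_{T_1}^\ast}\le CT_1^{1/8}\|(\boldsymbol{V}^-,\Phi^-)\|_{\mathcal{V}_{T_1}^\ast}$.

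A minor remark: for the biharmonic part you suggest bounding by $\|F\|_{L^2(0,T_1;L^2)}$, whereas the paper bounds by $\|\overline{f}^k\|_{L^2(0,T_1;H^1)}$. Your choice is in fact sufficient (the test by $\partial_t\Phi$ only needs $F\in L^2$), so this is a harmless simplification.
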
 
\begin{proof}
To this end, we define
\[(\overline{\boldsymbol{v}}^{k+1},\overline{\phi}^{k+1},\overline{q}^{k+1})=(\boldsymbol{v}^{k+1}-\boldsymbol{v}^k,\phi^{k+1}-\phi^k,q^{k+1}-q^{k}),\]
which satisfies
\begin{align}
	\begin{cases}
		\widehat{\rho}(\phi^k+\psi)\partial_t\overline{\boldsymbol{v}}^{k+1}-{\text{div}(2\widehat{\nu}(\phi^k+\psi)\mathbb{D}\overline{\boldsymbol{v}}^{k+1})}+\nabla\overline{q}^{k+1}=\overline{\boldsymbol{g}}^k,&\text{a.e. in }\Omega\times(0,T),\\[1mm]
		\text{div}\,\overline{\boldsymbol{v}}^{k+1}=0,&\text{a.e. in }\Omega\times(0,T),\\[1mm]
		\partial_t\overline{\phi}^{k+1}+\Delta^2\overline{\phi}^{k+1}=\overline{f}^k,&\text{a.e. in }\Omega\times(0,T),\\[1mm]
		\overline{\boldsymbol{v}}^{k+1}=\boldsymbol{0},\quad\overline{\phi}^{k+1}=\Delta\overline{\phi}^{k+1}=0,&\text{a.e. on }\Gamma\times(0,T),\\[1mm]
		\overline{\boldsymbol{v}}^{k+1}|_{t=0}=\boldsymbol{0},\quad\overline{\phi}^{k+1}|_{t=0}=0,&\text{a.e. in }\Omega,
	\end{cases}\notag
\end{align}
where $T\in(0,T_2]$ and
\begin{align*}
	\overline{\boldsymbol{g}}^k:&=-\Delta\overline{\phi}^k\nabla\phi^{k}-\Delta\phi^{k-1}\nabla\overline{\phi}^k
	-\Delta\overline{\phi}^k\nabla\psi-\Delta\psi\nabla\overline{\phi}^k-g\varrho_1\overline{\phi}^k\boldsymbol{e}_3\\
	&\quad-\widehat{\rho}(\phi^k+\psi)(\overline{\boldsymbol{v}}^k\cdot\nabla)\boldsymbol{v}^{k}
	-(\widehat{\rho}(\phi^k+\psi)-\widehat{\rho}(\phi^{k-1}
	+\psi))(\boldsymbol{v}^{k-1}\cdot\nabla)\boldsymbol{v}^k\\
	&\quad-\widehat{\rho}(\phi^{k-1}+\psi)(\boldsymbol{v}^{k-1}\cdot\nabla)\overline{\boldsymbol{v}}^k+\varrho_1(\nabla\mu(\phi^k+\psi)\cdot\nabla)\overline{\boldsymbol{v}}^k\\
	&\quad+\varrho_1((\nabla\mu(\phi^k+\psi)-\nabla\mu(\phi^{k-1}+\psi))\cdot\nabla)\boldsymbol{v}^{k-1}\\
	&\quad-(\widehat{\rho}(\phi^k+\psi)-\widehat{\rho}(\phi^{k-1}+\psi))\partial_t\boldsymbol{v}^k+{\text{div}(2(\widehat{\nu}(\phi^k+\psi)-\widehat{\nu}(\phi^{k-1}+\psi))\mathbb{D}\boldsymbol{v}^k)},\\
\overline{	f}^k:&=\Delta(\widehat{\Psi}'({\phi}^{k}+\psi)-\widehat{\Psi}'({\phi}^{k-1}+\psi))-\overline{\boldsymbol{v}}^k\cdot\nabla(\phi^{k-1}+\psi)
	-\boldsymbol{v}^{k-1}\cdot\nabla\overline{\phi}^k.
\end{align*}
Similar to \eqref{uniform-2'}, recalling that $(\overline{\boldsymbol{ v}}^{k+1}(0),\overline{\phi}^{k+1}(0))=(\boldsymbol{0},0)$, we have
\begin{align}
	\|\overline{\boldsymbol{ v}}^{k+1}\|_{L^\infty(0,T;\mathbf{H}_\sigma^1(\Omega))}^2+\|\partial_t\overline{\boldsymbol{ v}}^{k+1}\|_{L^2(0,T;\mathbf{L}^2_\sigma(\Omega))}^2\leq C\|\overline{\boldsymbol{ g}}^k\|_{L^2(0,T;\mathbf{L}^2(\Omega))}^2.\label{0428-es1}
\end{align}
Since
\[{-\text{div}(2\widehat{\nu}(\phi^k+\psi)\mathbb{D}\overline{\boldsymbol{ v}}^{k+1})}+\nabla\overline{q}^{k+1}=\overline{\boldsymbol{ g}}^k-\widehat{\rho}(\phi^k+\psi)\partial_t\overline{\boldsymbol{ v}}^{k+1},\quad\text{a.e. in }\Omega\times(0,T),\]
{and 
\[\|\widehat{\nu}(\phi^k+\psi)\|_{L^\infty(0,T;C^1(\overline{\Omega}))}\leq C,\]
according to \eqref{0428-es1} and Lemma \ref{regularity-theory},} there holds
\begin{align}
	\|\overline{\boldsymbol{ v}}^{k+1}\|_{L^2(0,T;\mathbf{H}_\sigma^2(\Omega))}^2+\|\overline{q}^{k+1}\|_{L^2(0,T;H^1(\Omega))}^2\leq C\|\overline{\boldsymbol{ g}}^k\|_{L^2(0,T;\mathbf{L}^2(\Omega))}^2.\label{0428-es2}
\end{align}
While for $\overline{\phi}^{k+1}$, similar to \eqref{0409-uni5}, we obtain
\begin{align}
	\|\overline{\phi}^{k+1}\|_{L^\infty(0,T;H^2(\Omega))}^2+\|\partial_t\overline{\phi}^{k+1}\|_{L^2(0,T;L^2(\Omega))}^2\leq C\|\overline{f}^k\|_{L^2(0,T;H^1(\Omega))}^2.\label{0428-es3}
\end{align}
Since 
\begin{align*}
	\begin{cases}
		\Delta^2\overline{\phi}^{k+1}=\overline{f}^k-\partial_t\overline{\phi}^{k+1},&\text{a.e. in }\Omega,\\
		\overline{\phi}^{k+1}=\Delta\overline{\phi}^{k+1}=0,&\text{a.e. on }\Gamma,
	\end{cases}
\end{align*}
for almost all $t\in(0,T)$, by \eqref{0428-es3} and the elliptic regularity theory (see, e.g., \cite[Lemma 5.8]{NASII04}), we have
\begin{align}
	\|\overline{\phi}^{k+1}\|_{L^2(0,T;H^4(\Omega))}^2&\leq C\|\overline{f}^k\|_{L^2(0,T;H^1(\Omega))}^2.\label{0428-es4}
\end{align}
Finally, we only need to estimate $\|\overline{\boldsymbol{ g}}^k\|_{L^2(0,T;\mathbf{L}^2(\Omega))}$ and $\|\overline{f}^k\|_{L^2(0,T;H^1(\Omega))}$. By the definition of $\overline{\boldsymbol{ g}}^k$ and \eqref{uni}, we have
\begin{align*}
		\|\overline{\boldsymbol{g}}^k\|_{\mathbf{L}^2}&\leq \| (\widehat{\rho}(\phi^k+\psi)-\widehat{\rho}(\phi^{k-1}+\psi))\partial_t\boldsymbol{v}^k\|_{\mathbf{L}^2}+\|\Delta\overline{\phi}^k\nabla\phi^{k}\|_{\mathbf{L}^2}+\|\Delta\phi^{k-1}\nabla\overline{\phi}^k\|_{\mathbf{L}^2}
	\\
	&\quad+\|\Delta\overline{\phi}^k\nabla\psi\|_{\mathbf{L}^2}+\|\Delta\psi\nabla\overline{\phi}^k\|_{\mathbf{L}^2}+g\varrho_1\|\overline{\phi}^k\|_{L^2}+\|\widehat{\rho}(\phi^k+\psi)(\overline{\boldsymbol{v}}^k\cdot\nabla)\boldsymbol{ v}^k\|_{\mathbf{L}^2}\\
	&\quad+\|(\widehat{\rho}(\phi^k+\psi)-\widehat{\rho}(\phi^{k-1}
	+\psi))(\boldsymbol{v}^{k-1}\cdot\nabla)\boldsymbol{ v}^k\|_{\mathbf{L}^2}+\|\widehat{\rho}(\phi^{k-1}+\psi)(\boldsymbol{ v}^{k-1}\cdot\nabla)\overline{\boldsymbol{ v}}^k\|_{\mathbf{L}^2}\\
	&\quad+\|\varrho_1(\nabla\mu(\phi^k+\psi)\cdot\nabla)\overline{\boldsymbol{v}}^k\|_{\mathbf{L}^2}+\|\varrho_1((\nabla\mu(\phi^k+\psi)-\nabla\mu(\phi^{k-1}+\psi))\cdot\nabla)\boldsymbol{v}^{k-1}\|_{\mathbf{L}^2}\\
    &\quad+{\|\text{div}(2(\widehat{\nu}(\phi^k+\psi)-\widehat{\nu}(\phi^{k-1}+\psi))\mathbb{D}\boldsymbol{v}^k)\|_{\mathbf{L}^2}}\\
	&\leq \| \widehat{\rho}(\phi^k+\psi)-\widehat{\rho}(\phi^{k-1}+\psi)\|_{L^\infty}\|\partial_t\boldsymbol{v}^k\|_{\mathbf{L}^2}+\|\Delta\overline{\phi}^k\|_{L^2}\|\nabla\phi^{k}\|_{\mathbf{L}^\infty}+\|\Delta\phi^{k-1}\|_{L^4}\|\nabla\overline{\phi}^k\|_{\mathbf{L}^4}
	\\
	&\quad+\|\Delta\overline{\phi}^k\|_{L^2}\|\nabla\psi\|_{\mathbf{L}^\infty}+\|\Delta\psi\|_{L^4}\|\nabla\overline{\phi}^k\|_{\mathbf{L}^4}+g\varrho_1\|\overline{\phi}^k\|_{L^2}+\|\widehat{\rho}(\phi^k+\psi)\|_{L^\infty}\|\overline{\boldsymbol{v}}^k\|_{\mathbf{L}^4}\|\nabla\boldsymbol{ v}^k\|_{\mathbf{L}^4}\\
	&\quad+\|\widehat{\rho}(\phi^k+\psi)-\widehat{\rho}(\phi^{k-1}
	+\psi)\|_{L^\infty}\|\boldsymbol{v}^{k-1}\|_{\mathbf{L}^\infty}\|\nabla\boldsymbol{ v}^k\|_{\mathbf{L}^2}+\|\widehat{\rho}(\phi^{k-1}+\psi)\|_{L^\infty}\|\boldsymbol{ v}^{k-1}\|_{\mathbf{L}^\infty}\|\nabla\overline{\boldsymbol{ v}}^k\|_{\mathbf{L}^2}\\
	&\quad+\varrho_1\|\nabla\mu(\phi^k+\psi)\|_{\mathbf{L}^\infty}\|\nabla\overline{\boldsymbol{v}}^k\|_{\mathbf{L}^2}+\varrho_1\|\nabla\mu(\phi^k+\psi)-\nabla\mu(\phi^{k-1}+\psi)\|_{\mathbf{L}^3}\|\nabla\boldsymbol{v}^{k-1}\|_{\mathbf{L}^6}\\
     &\quad+{2\|(\widehat{\nu}(\phi^k+\psi)-\widehat{\nu}(\phi^{k-1}+\psi))\mathbb{D}\boldsymbol{v}^k\|_{\mathbf{H}^1}}\\
	&\leq C\Big(\|\overline{\phi}^k\|_{H^2}+\|\overline{\boldsymbol{ v}}^k\|_{\mathbf{H}^1}+\|\nabla\Delta\overline{\phi}^k\|_{\mathbf{L^3}}+{\|\widehat{\nu}(\phi^k+\psi)-\widehat{\nu}(\phi^{k-1}+\psi)\|_{H^2}\|\mathbb{D}\boldsymbol{v}^k\|_{\mathbf{H}^1}}\\
    &\qquad+\|\widehat{\Psi}''(\phi^k+\psi)\nabla(\phi^k+\psi)-\widehat{\Psi}''(\phi^{k-1}+\psi)\nabla(\phi^{k-1}+\psi)\|_{\mathbf{L}^3}\Big)\\
	&\leq C\Big(\|\overline{\phi}^k\|_{H^2}+\|\overline{\boldsymbol{ v}}^k\|_{\mathbf{H}^1}+\|\overline{\phi}^k\|_{H^2}^{\frac{1}{4}}\|\overline{\phi}^k\|_{H^4}^{\frac{3}{4}}+\|\widehat{\Psi}''(\phi^k+\psi)\nabla\overline{\phi}^k\|_{\mathbf{L}^3}\\
	&\qquad+\|(\widehat{\Psi}''(\phi^k+\psi)-\widehat{\Psi}''(\phi^{k-1}+\psi))\nabla(\phi^{k-1}+\psi)\|_{\mathbf{L}^3}\Big)\\
	&\leq C\Big(\|\overline{\phi}^k\|_{H^2}+\|\overline{\boldsymbol{ v}}^k\|_{\mathbf{H}^1}+\|\overline{\phi}^k\|_{H^2}^{\frac{1}{4}}\|\overline{\phi}^k\|_{H^4}^{\frac{3}{4}}\Big),
\end{align*}
which infers that
\begin{align}
	\|\overline{\boldsymbol{ g}}^k\|_{L^2(0,T;\mathbf{L}^2(\Omega))}^2&\leq CT\|(\overline{\boldsymbol{ v}}^k,\overline{\phi}^k)\|_{\mathcal{V}_T^\ast}^2+C\int_0^T \|\overline{\phi}^k(t)\|_{H^2}^{\frac{1}{2}}\|\overline{\phi}^k(t)\|_{H^4}^{\frac{3}{2}}\,\mathrm{d}t\notag\\
	&\leq CT\|(\overline{\boldsymbol{ v}}^k,\overline{\phi}^k)\|_{\mathcal{V}_T^\ast}^2+C\Big(\int_0^T \|\overline{\phi}^k(t)\|_{H^2}^2\,\mathrm{d}t\Big)^{\frac{1}{4}}\Big(\int_0^T \|\overline{\phi}^k(t)\|_{H^4}^2\,\mathrm{d}t\Big)^{\frac{3}{4}}\notag\\
	&\leq CT^{\frac{1}{4}}\|(\overline{\boldsymbol{ v}}^k,\overline{\phi}^k)\|_{\mathcal{V}_T^\ast}^2.\label{0428-es5}
\end{align}
By the definition of $\overline{f}^k$, \eqref{uni} and the product estimate \eqref{product}, we obtain
\begin{align}
	\|\overline{f}^k\|_{H^1}&\leq \|\Delta(\widehat{\Psi}'({\phi}^{k}+\psi)-\widehat{\Psi}'({\phi}^{k-1}+\psi))\|_{H^1}+\|\overline{\boldsymbol{v}}^k\cdot\nabla(\phi^{k-1}+\psi)\|_{H^1}
	+\|\boldsymbol{v}^{k-1}\cdot\nabla\overline{\phi}^k\|_{H^1}\notag\\
	&\leq C\|\widehat{\Psi}'({\phi}^{k}+\psi)-\widehat{\Psi}'({\phi}^{k-1}+\psi)\|_{H^3}+C\|\overline{\boldsymbol{v}}^k\|_{\mathbf{H}^1}\|\nabla(\phi^{k-1}+\psi)\|_{\mathbf{H}^2}
	+C\|\boldsymbol{v}^{k-1}\|_{\mathbf{H}^2}\|\nabla\overline{\phi}^k\|_{\mathbf{H}^1}\notag\\
	&\leq C\big(\|\overline{\phi}^k\|_{H^3}+\|\overline{\boldsymbol{ v}}^k\|_{\mathbf{H}^1}\big),\notag
\end{align}
which indicates that
\begin{align}
	\|\overline{f}^k\|_{L^2(0,T;H^1(\Omega))}^2&\leq C\int_0^T\|\overline{\phi}^k(t)\|_{H^3}^2\,\mathrm{d}t+C\int_0^T\|\overline{\boldsymbol{ v}}^k(t)\|_{\mathbf{H}^1}^2\,\mathrm{d}t\notag\\
	&\leq C\int_0^t\|\overline{\phi}^k(t)\|_{H^2}\|\overline{\phi}^k(t)\|_{H^4}\,\mathrm{d}t+CT\|(\overline{\boldsymbol{ v}}^k,\overline{\phi}^k)\|_{\mathcal{V}_T^\ast}^2\notag\\
	&\leq C\Big(\int_0^T \|\overline{\phi}^k(t)\|_{H^2}^2\,\mathrm{d}t\Big)^{\frac{1}{2}}\Big(\int_0^T \|\overline{\phi}^k(t)\|_{H^4}^2\,\mathrm{d}t\Big)^{\frac{1}{2}}+CT\|(\overline{\boldsymbol{ v}}^k,\overline{\phi}^k)\|_{\mathcal{V}_T^\ast}^2\notag\\
	&\leq CT^{\frac{1}{2}}\|(\overline{\boldsymbol{ v}}^k,\overline{\phi}^k)\|_{\mathcal{V}_T^\ast}^2.\label{0428-es6}
\end{align}
Consequently, by \eqref{0428-es1}, \eqref{0428-es2}, \eqref{0428-es3}, \eqref{0428-es4}, \eqref{0428-es5} and \eqref{0428-es6}, we obtain
\[\|(\overline{\boldsymbol{v}}^{k+1},\overline{\phi}^{k+1})\|_{\mathcal{V}^\ast_{T}}\leq CT^{\frac{1}{8}}\|(\overline{\boldsymbol{v}}^{k},\overline{\phi}^{k})\|_{\mathcal{V}_{T}^\ast},\] 
where the constant $C>0$ depends on the initial datum $(\boldsymbol{ v}_0,\phi_0)$, but independent of $k$.
Hence, we can conclude that there exists a small time $T_1\in(0, T_2]$ such that
\[\|(\overline{\boldsymbol{v}}^{k+1},\overline{\phi}^{k+1})\|_{\mathcal{V}^\ast_{T_1}}\leq \frac{1}{2}\|(\overline{\boldsymbol{v}}^{k},\overline{\phi}^{k})\|_{\mathcal{V}^\ast_{T_1}}\leq \cdots\leq \frac{1}{2^k}\|(\overline{\boldsymbol{v}}^{1},\overline{\phi}^{1})\|_{\mathcal{V}^\ast_{T_1}},\] 
which infers that $(\boldsymbol{v}^k,\phi^k)$ is a Cauchy sequence in $\mathcal{V}^\ast_{T_1}$ with limit function $({\boldsymbol{v}},{\phi})\in\mathcal{V}_{T_1}^\ast$. Hence, we complete the proof of Lemma \ref{Cauchy-sequence}. 
\end{proof}

\noindent\textbf{Proof of Proposition \ref{solve-approximating}.}
According to Lemmas \ref{boundedness-1}, \ref{Cauchy-sequence}, we can conclude that there exist limit functions $(\boldsymbol{v},\phi,q)$ such that
\begin{align*}
	\phi^k&\to \phi && \text{weakly star in }L^\infty(0,T_1;H^5(\Omega)),\\
	& &&\text{strongly in }L^\infty(0,T_1;H^2(\Omega))\cap L^2(0,T_1;H^4(\Omega)),\\
	\partial_t\phi^k&\to \partial_t\phi&&\text{weakly star in }L^2(0,T_1;H^3(\Omega))\cap L^\infty(0,T_1;H^1(\Omega)),\\
	&  &&\text{strongly in }L^2(0,T_1;L^2(\Omega)),\\
	\boldsymbol{v}^k&\to\boldsymbol{v}&&\text{weakly star in }L^\infty(0,T_1;\mathbf{H}_\sigma^2(\Omega)),\\
	& &&\text{strongly in }L^\infty(0,T_1;\mathbf{H}_\sigma^1(\Omega))\cap L^2(0,T_1;\mathbf{H}_\sigma^2(\Omega)),\\
	\partial_t\boldsymbol{v}^k&\to\partial_t\boldsymbol{v}&&\text{weakly star in }L^2(0,T_1;\mathbf{H}_\sigma^1(\Omega))\cap L^\infty(0,T_1;\mathbf{L}_\sigma^2(\Omega)),\\
	& &&\text{strongly in }L^2(0,T_1;\mathbf{L}_\sigma^2(\Omega)),\\
	q^k&\to q&&\text{weakly star in }L^\infty(0,T_1;H^1(\Omega)),\\
	&  &&\text{strongly in }L^2(0,T_1;H^1(\Omega)).
\end{align*}
Then, we can pass to the limit as $k\to+\infty$ in \eqref{it1} and conclude that $({\boldsymbol{v}},{\phi},{q})$ is a strong solution to the approximating problem \eqref{app-system} on $[0,T_1]$. 
{Based on
\[-\text{div}(2\widehat{\nu}(\phi+\psi)\mathbb{D}\boldsymbol{v})+\nabla q=\boldsymbol{g}-\widehat{\rho}(\phi+\psi)\partial_t\boldsymbol{v}\quad\text{a.e. in }\Omega\times(0,T_1),\]
and the fact
\[\boldsymbol{g}-\widehat{\rho}(\phi+\psi)\boldsymbol{v}\in L^2(0,T_1;\mathbf{H}^1(\Omega)),\quad \widehat{\nu}(\phi+\psi)\in L^\infty(0,T_1;C^2(\overline{\Omega})),\]
according to Lemma \ref{regularity-theory}, we can conclude that 
\[\boldsymbol{v}\in L^2(0,T_1;\mathbf{H}^3(\Omega)),\quad q\in L^2(0,T;H^2(\Omega)).\]
}%
By the regularity of $(\boldsymbol{v},\phi)$, according to Lemma \ref{regularity}, we see that
\begin{align}
	&\boldsymbol{ g}\in C([0,T_1];\mathbf{H}^r(\Omega))\cap  L^\infty(0,T_1;\mathbf{H}^1(\Omega)),\quad \partial_t\boldsymbol{g}\in L^2(0,T_1;(\mathbf{H}_\sigma^1(\Omega))'),\label{0428-regularity-g}\\
	&f\in C([0,T_1];H^2(\Omega))\cap L^2(0,T_1;H^3(\Omega)),\quad \partial_t f\in L^2(0,T_1;H^1(\Omega)),\label{0428-regularity-f}
\end{align}
for all $r\in[0,1)$. By \eqref{app-system}$_3$, \eqref{0428-regularity-f}, the regularity of $\partial_t\phi$ and the elliptic regularity theory, we see  that $\phi\in L^2(0,T_1;H^7(\Omega))$.
According to \eqref{0428-regularity-g}, similar to \eqref{0219-2}, we obtain 
\[\partial_t(\widehat{\rho}(\phi+\psi)\partial_t\boldsymbol{v})\in L^2(0,T_1;(\mathbf{H}_\sigma^1(\Omega))').\]
Furthermore, by the regularity above and \eqref{app-system}$_1$, we also see that $\partial_t q\in L^2(0,T_1;(\mathbf{H}_\sigma^1(\Omega))')$.
Finally, by the Aubin--Lions--Simon lemma, we can conclude the following continuity results:
\begin{align*}
&\phi\in C([0,T_1];H^5(\Omega)),\quad\boldsymbol{v}\in C([0,T_1];\mathbf{H}_\sigma^2(\Omega)),\quad q\in C([0,T_1];H^1(\Omega)),\quad\partial_t\boldsymbol{v}\in C([0,T_1];\mathbf{L}_\sigma^2(\Omega)).
\end{align*}
By \eqref{app-system}$_3$, \eqref{0428-regularity-f} and $\phi\in  C([0,T_1];H^5(\Omega))$, we obtain $\partial_t\phi\in C([0,T_1];H_0^1(\Omega))$.
For the uniqueness of  local strong solutions to problem \eqref{app-system}, we assume that $(\boldsymbol{v}_i,\phi_i,q_i)$, $i=1,2$, are two local strong solutions on $[0,T_1]$ subject to initial datum $(\boldsymbol{v}_0,\phi_0)$ and there exists a positive constant $\overline{M}$ such that
\begin{align}
	\|(\boldsymbol{v}_i,\phi_i)\|_{\mathcal{V}_{T_1}}\leq \overline{M}\quad\text{for }i=1,2.\notag
\end{align}
We denote the difference of the two local strong solutions by $(\overline{\boldsymbol{v}},\overline{\phi},\overline{q})$, that is,
\[(\overline{\boldsymbol{v}},\overline{\phi},\overline{q}):=(\boldsymbol{v}_1-\boldsymbol{v}_2,\phi_1-\phi_2,q_1-q_2).\]
Since $T_1\in(0,T_2]$, we can follow the argument as we did in the proof of Lemma \ref{Cauchy-sequence} to conclude that there exists a small time $T_1^\sharp\in(0,T_1)$, depending only on $\overline{M}$, such that
\[\|(\overline{\boldsymbol{v}},\overline{\phi})\|_{\mathcal{V}_{T_1^\sharp}^\ast}\leq \frac{1}{2}\|(\overline{\boldsymbol{v}},\overline{\phi})\|_{\mathcal{V}^\ast_{T_1^\sharp}}, \]
which infers the uniqueness of the local strong solutions on $[0,T_1^\sharp]$. Finally, we can continue this procedure in time interval $[T_1^\sharp,2T_1^\sharp]$,...,$[i^\sharp T_1^\sharp,T_1]$, for some $i^\sharp\in\mathbb{N}$ such that $i^\sharp T_1^\sharp<T_1\leq (i^\sharp+1)T_1^\sharp$. 
Therefore, we complete the proof of Proposition \ref{solve-approximating}.
\hfill$\square$
\medskip

\noindent\textbf{Proof of Theorem \ref{Main-1}: Problem (D).}
By \eqref{250123}, the definition of $\delta$, the continuity ${\phi}\in C([0,T_1];H^5(\Omega))$ and the embedding inequality $\|{\phi}+\psi\|_{L^\infty}\leq C\|{\phi}+\psi\|_{H^5}$, we can conclude that there exists a small time $T_0\in(0,T_1]$ such that
\[\|{\phi}(t)+\psi\|_{L^\infty}\leq 1-\delta,\quad\forall \ t\in[0,T_0],\]
which, together with the definition of $\widehat{\rho}$, {$\widehat{\nu}$} and $\widehat{\Psi}$, implies that
\[\widehat{\rho}({\phi}+\psi)=\rho({\phi}+\psi),\quad{\widehat{\nu}({\phi}+\psi)=\nu({\phi}+\psi)}\quad\text{and}\quad\widehat{\Psi}({\phi}+\psi)=\Psi({\phi}+\psi)\quad\text{on }[0,T_0].\]
Hence, we can conclude that the approximating solution $({\boldsymbol{v}},{\phi},{q})$ is a local-in-time strong solution to problem \eqref{system} on $[0,T_0]$. By the same argument in the proof of Proposition \ref{solve-approximating}, we can conclude the uniqueness of local strong solutions to problem \eqref{system}. Hence, we complete the proof of Theorem \ref{Main-1}.
\hfill$\square$

\medskip

\noindent\textbf{Proof of Theorem \ref{Main-1}: Problem (N).}
According to Remarks \ref{solve-N}, \ref{regularity-2}, \ref{solve-linear1-N} and \ref{solve-linear2-N}, we can prove the existence and uniqueness of problem $(\mathbf{N})$ following the same argument as we did for problem $(\mathbf{D})$.\hfill $\square$

\vspace{5mm} 
\noindent\textbf{Acknowledgments.} 
The author thanks Professor Hao Wu for helpful discussions.

\end{document}